\let\oldtocsection=\tocsection
\let\oldtocsubsection=\tocsubsection 
\let\oldtocsubsubsection=\tocsubsubsection
\renewcommand{\tocsection}[2]{\vspace{0.5em}\hspace{0em}\oldtocsection{#1}{#2}}
\renewcommand{\tocsubsection}[2]{\vspace{0.5em}\hspace{1em}\oldtocsubsection{#1}{#2}}
\renewcommand{\tocsubsubsection}[2]{\vspace{0.5em}\hspace{2em}\oldtocsubsubsection{#1}{#2}}
\newtheorem{theoreme}{Theorem}[section]
\newtheorem{rem}[theoreme]{Remark}
\theoremstyle{definition}
\numberwithin{equation}{section}
 \renewenvironment{proof}{{\bfseries \noindent Proof.}}{\demo}
\newcommand\xqed[1]{%
  \leavevmode\unskip\penalty9999 \hbox{}\nobreak\hfill
  \quad\hbox{#1}}
\newcommand\demo{\xqed{$\square$}}
\def\u2{\u^2}
\def\u3{\u^3}
\def\u4{\u^4}
\def\u5{\u^5}
\def\y1{\y^1}
\def\y2{\z^1}
\def\y3{\y^3}
\def\y4{\z^3}
\def\y5{\y^5}
\def\R{\mathbb R}
\def\HH{\mathcal H}
\def\AA{\mathcal A}
\def\la {{\lambda}}
\newcommand {\nc}   {\newcommand}
\nc {\be}   {\begin{equation}} \nc {\ee}   {\end{equation}} \nc
\nc {\eeq}  {\end{eqnarray}} \nc {\beqs}
\nc {\eeqs} {\end{eqnarray*}}
\def\edc{\end{document}}
\providecommand{\abs}[1]{\lvert#1\rvert}
\newcounter{dummy} 
\numberwithin{dummy}{section}
\newtheorem{Theorem}[dummy]{Theorem}
\newtheorem{Corollary}[dummy]{Corollary}
\newtheorem{defi}[dummy]{Definition}
\newtheorem{Lemma}[dummy]{Lemma}
\numberwithin{equation}{section}
\begin{document}
\title[\fontsize{7}{9}\selectfont  ]{Stabilization of the generalized Rao-Nakra beam by partial viscous damping}
\author{Mohammad Akil$^{1}$ and Zhuangyi Liu$^{2}$}
\address{$^1$ Universit\'e Savoie Mont Blanc - Chamb\'ery - France, Laboratoire LAMA}
\address{$2$ University of Minnesota - Duluth, Department of Mathematics and Statistics.}
\email{mohammad.akil@univ-smb.fr ,  zliu@d.umn.edu}
\keywords{Beam; Frictional damping; Semigroup; Polynomial Stability.}

\maketitle
\begin{abstract}
{In this paper, we consider the stabilization of the generalized Rao-Nakra beam equation, which consists of four wave equations for the longitudinal displacements and the shear angle of the top and bottom layers and one Euler-Bernoulli beam equation for the transversal displacement. Dissipative mechanism are provided through viscous damping for two displacements. The location of the viscous damping are divided into two groups, characterized by whether both of the top and bottom layers are directly damped or otherwise. Each group consists of three cases. We obtain the necessary and sufficient conditions for the cases in group two to be strongly stable. Furthermore, polynomial stability of certain orders are proved. The cases in group one are left for future study. }

\end{abstract}
\pagenumbering{roman}
\maketitle
\tableofcontents
\pagenumbering{arabic}
\setcounter{page}{1}

\section{Introduction}
\noindent Several three layer laminated beam and plate models were proposed in the late 1960's and early 1970's (\cite{MEAD1969163}, \cite{SADASIVARAO1974309} and \cite{10.1115/1.3422825}).  Later, {the following generalized Nakra-Rao beam model was developed in \cite{LIU1999149}, where the shear effect of the bottom and top layer are taken into account. }
\begin{equation}\label{1LB-1}
\left\{\begin{array}{lll}
\rho_1h_1u^1_{tt}-E_1h_1u^1_{xx}-\tau=0&\text{in}&(0,L)\times \R^{+},\\[0.1in]
\rho_1I_1y^1_{tt}-E_1I_1y^1_{xx}-\frac{h_1}{2}\tau+G_1h_1(\omega_x+y^1)=0&\text{in}&(0,L)\times \R^{+},\\[0.1in]
\rho h\omega_{tt}+EI\omega_{xxxx}-G_1h_1(\omega_x+y^1)_x-G_3h_3(\omega_x+y^3)_x-h_2\tau_x=0&\text{in}&(0,L)\times \R^{+},\\[0.1in]
\rho_3h_3u^3_{tt}-E_3h_3u^3_{xx}+\tau=0&\text{in}&(0,L)\times \R^{+},\\[0.1in]
\rho_3I_3y^3_{tt}-E_3I_3y^3_{xx}-\frac{h_3}{2}\tau+G_3h_3(\omega_x+y^3)=0&\text{in}&(0,L)\times \R^{+},
\end{array}
\right.
\end{equation}
where { $u^1,y^1,u^3,y^3$ are the longitudinal displacement and shear angle of the top and bottom layers};\ $\omega$ is the transverse displacement of the beam, $\tau$ is the shear stress in the core layer, where 
$$
\tau=-u^1-\frac{h_1}{2}y^1+h_2\omega_x+u^3-\frac{h_3}{2}y^3.
$$
The physical parameters $\rho_1, E_i, G_i,I_i>0$ are the  density, Young's modulus, shear modulus, and moments of inertia of the { $ith$ layer for $i=1,2,3$,  respectively}. { The beam is composed of a top and a bottom face-plate of respective thicknesses, $h^1$ and $h^3$ and a core of thickness $h^2$}. In addition, $\rho h=\rho_1h_1+\rho_2h_2+\rho_3h_3$, $EI=E_1I_1+E_3I_3$,{ and the shear modulus $G_2=\frac{E_2}{2(1+\nu)}$ with $-1<\nu<\frac{1}{2}$ being the Poisson ratio} (see Figure 1).
\begin{figure}[h]
\begin{center}
\includegraphics[height=15cm,width=15cm]{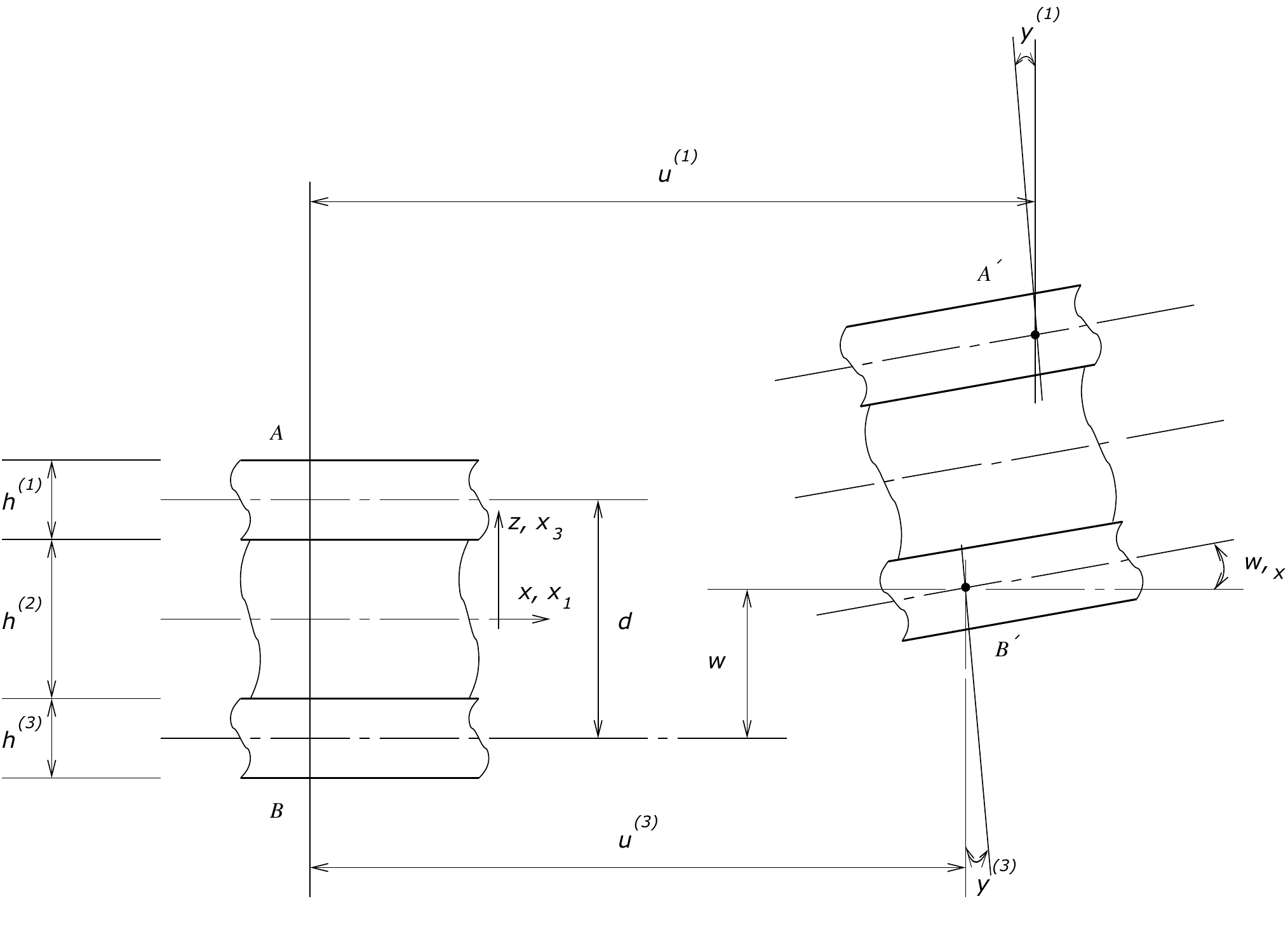}
\caption{.}
\end{center}
\end{figure}
\newpage \noindent When the rotatory inertia and the transverse shear of the top and bottom layers are neglected, equations \eqref{1LB-1}$_2$ and \eqref{1LB-1}$_5$ reduce to the familiar Euler-Bernoulli hypothesis $y^1-\omega_x=y^3-\omega_x=0$. If we consider the core material to be linearly elastic, i.e. $\tau=2G_2\gamma$, with the shear strain 
$$
\gamma=\frac{1}{2h_2}\left(-u^1+u^3+\alpha\omega_x\right),
$$
and $\alpha=h_2+\frac{1}{2}\left(h_1++h_3\right)$, we then obtained the Rao-Nakra model \cite{SADASIVARAO1974309},
\begin{equation}\label{Rao-Nakra}
\left\{\begin{array}{l}
\rho_1h_1u^1_{tt}-E_1h_1u^1_{xx}-\frac{G_2}{h_2}\left(-u^1+u^3+\alpha\omega_x\right)=0,\\[0.1in]
\rho_3h_3u^3_{tt}-E_3h_3u^3_{xx}+\frac{G_2}{h_2}\left(-u^1+u^3+\alpha\omega_x\right)=0,\\[0.1in]
\rho h\omega_{tt}+EI\omega_{xxxx}-\frac{G_2\alpha}{h_2}\left(-u^1+u^3+\alpha\omega_x\right)_x=0.
\end{array}
\right.
\end{equation}
Furthermore, if the extensional forces in the top and bottom layers are also neglected, we obtain the Mead-Markos model (see \cite{MEAD1969163}), 
\begin{equation}\label{Mead-Markus}
\left\{\begin{array}{l}
\rho h\omega_{tt}+EI\omega_{xxxx}-G_2\alpha\gamma=0,\\
2h_2\gamma_{xx}-G_2d\gamma-\alpha \omega_{xxx}=0.
\end{array}
\right.
\end{equation}
which can be simplified into a six-order PDE for $\omega$. In \cite{LIU1999149}, exponential stability was proved for the Mead-Markus model \eqref{Mead-Markus}, when the shear stress $\tau$ and shear strain $\gamma$ relation is assumed to be viscoelastic of Boltzmann type. When this relationship is of Kelvin-Voigt type, analyticity of the associated semigroup was proved by Hansen and Liu (see \cite{Hansen1999}), which was further extended to the corresponding multi-layers beam and plate model by Allen and Hansen in \cite{ALLEN2009e1835} and \cite{Hansen-ALLen}.\\[0.1in] 
When the extensional motion of the bottom and top layers is neglected, we obtain the model proposed by Hansen and Spies (see \cite{HANSEN1997183}). For the model in \cite{HANSEN1997183}, exponential stability was proved in \cite{doi:10.1137/040610003} when structural damping and boundary damping are added, or when viscous damping are added to all three equation (see \cite{RAPOSO201685}).\\[0.1in]
In 2013, (see \cite{HansenOZER}), { exponential stability of Rao-Nakra model \eqref{Rao-Nakra} was obtained when standard boundary damping are imposed on one end of the beam for all three layers}.\\[0.1in]
The boundary controllability problems of the Rao-Nakra beam equation (multi-layers, $\alpha>0$) have been studied in a series of papers \cite{refId0,HansenOZER,RAJARAM2007558,1582645}. In \cite{refId0}, exact controllability results for the { multi-layers} Rao-Nakra plate system with locally distributed control in a neighborhood of a portion of the boundary were obtained by the method of {Carleman} estimates.\\[0.in] 
{In 2018, (see \cite{LiLiuWang})  the authors considered the Rao-Nakra system (1.2) with viscous and/or Kelvin-Voigt dampings. They
first showed that the system is unstable when only the transverse displacement is damped and $\frac{E_1}{\rho_1}=\frac{E_3}{\rho_3}$. Moreover, they considered seven cases of the combination of damping locations and types when two displacements are damped. Polynomial stability of different orders and their optimality are obtained for a particular type boundary conditions. In 2020, (see \cite{LIU20206125}),  the authors considered the stability of the Rao-
Nakra sandwich beam equation with various boundary conditions. Polynomial stability of certain orders are
obtained when there is only one viscous damping acting either on the beam equation or one of the wave equations. For a few special cases, optimal orders are confirmed. They also studied the synchronization of
the model with viscous damping on the transversal displacement. Their results reveal that the order of the polynomial decay rate is sensitive to various boundary conditions and to the damping locations. }
\\[0.1in]
In this paper, we consider  the {generalized Rao-Nakra beam model } with only two viscous damping  described by 
\begin{equation}\label{LB-1}
\left\{\begin{array}{lll}
\rho_1h_1u^1_{tt}-E_1h_1u^1_{xx}-\tau+au^1_t=0&\text{in}&(0,L)\times \R^{+},\\[0.1in]
\rho_1I_1y^1_{tt}-E_1I_1y^1_{xx}-\frac{h_1}{2}\tau+G_1h_1(\omega_x+y^1)+by^1_t=0&\text{in}&(0,L)\times \R^{+},\\[0.1in]
\rho h\omega_{tt}+EI\omega_{xxxx}-G_1h_1(\omega_x+y^1)_x-G_3h_3(\omega_x+y^3)_x-h_2\tau_x+c\omega_t=0&\text{in}&(0,L)\times \R^{+},\\[0.1in]
\rho_3h_3u^3_{tt}-E_3h_3u^3_{xx}+\tau+du^3_t=0&\text{in}&(0,L)\times \R^{+},\\[0.1in]
\rho_3I_3y^3_{tt}-E_3I_3y^3_{xx}-\frac{h_3}{2}\tau+G_3h_3(\omega_x+y^3)+ey^3_t=0&\text{in}&(0,L)\times \R^{+},
\end{array}
\right.
\end{equation}
where $a,b,c,d,e\geq 0$ be dissipative coefficients. System \eqref{LB-1} is subjected to the following initial conditions 
\begin{equation}\label{LB-2}
\left\{\begin{array}{l}
u^1(x,0)=u_0^1(x),\ u^1_t(x,0)=u_1^1(x),\\[0.1in]
u^3(x,0)=u_0^2(x),\ u^3_t(x,0)=u_1^3(x),\\[0.1in]
\omega(x,0)=\omega_0(x),\ \omega_t(x,0)=\omega_1(x),\\[0.1in]
y^1(x,0)=y_0^1(x),\ y^1_t(x,0)=y_1^1(x),\\[0.1in]
y^3(x,0)=u_0^2(x),\ y^3_t(x,0)=u_1^3(x),
\end{array}
\right.
\end{equation}
For the boundary conditions, we assume that variables $u^1,u^3,y^1$ and $y^3$ satisfy {the} Dirichlet boundary conditions
\begin{equation}\label{LB-3}
u^1(0,t)=u^1(L,t)=u^3(0,t)=u^3(L,t)=y^1(0,t)=y^1(L,t)=y^3(0,t)=y^3(L,t)=0,
\end{equation}
and $\omega$ satisfies the {clamped} boundary condition
\begin{equation}\label{FB}
\omega(0,t)=\omega(L,t)=\omega_x(0,t)=\omega_x(L,t)=0.
\end{equation}
We shall investigate the stability of system \eqref{LB-1}-\eqref{FB} with only two viscous damping. The location of the damping are divided into two groups, characterized by whether both of the top and bottom layers are directly damped or otherwise. Group one consists of three cases, i.e., $a,d\ne 0, b=c=e=0$, or $b,e\ne 0, a=c=d=0$, or $a,e\ne 0, b=c=d=0$. Group two also consists of three cases, i.e., $a,b\ne 0, c=d=e=0$, or $a,c\ne 0, b=d=e=0$, or $a,c\ne 0, b=d=e=0$. \\[0.1in]
This paper is organized as follows. In section \ref{WPSS}, we present the semigroup setting of the system for well-posedness, including the necessary and sufficient conditions for strong stability of the cases in group two, and { sufficient conditions for strong stability or instability for some cases in group one}. Section \ref{PS123} is devoted to the polynomial stability { of the} three cases in group two. A concluding remark is given in the end of that section.

\section{Well-Posedness and Strong Stability}\label{WPSS}
Let $(u^1,u^3,\omega,y^1,y^3)$ be a regular solution of \eqref{LB-1}-\eqref{LB-3}, its associated energy is given by 
\begin{equation}\label{Energy-RN}
\begin{array}{lcl}
E(t)&=&\displaystyle
\frac{1}{2}\left(\rho_1h_1\int_0^L\abs{u_t}^2dx+E_1h_1\int_0^L\abs{u_x^1}^2dx+\rho_3h_3\int_0^L\abs{u_t^3}^2dx+E_3h_3\int_0^L\abs{u^3_x}^2dx\right.\\[0.1in]
&&\displaystyle
\left.+\rho h\int_0^L\abs{\omega_t}^2dx+EI\int_0^L\abs{\omega_{xx}}^2dx+\rho_1I_1\int_0^L\abs{y_t^1}^2dx+E_1I_1\int_0^L\abs{y_x^1}^2dx\right.\\[0.1in]
&&\displaystyle
\left.+\rho_3I_3\int_0^L\abs{y^3_t}^2dx+E_3I_3\int_0^L\abs{y^3_x}^2dx+G_1h_1\int_0^L\abs{\omega_x+y^1}^2dx+\int_0^L\abs{\tau}^2dx\right.\\[0.1in]
&&\displaystyle
\left.+G_3h_3\int_0^L\abs{\omega_x+y^3}^2dx\right). 
\end{array}
\end{equation}
A direct computation gives that  
\begin{equation}\label{dEnergy-RN}
\frac{d}{dt}E(t)=-a\int_0^L\abs{u^1_t}^2dx-b\int_0^L\abs{y^1_t}^2dx-c\int_0^L\abs{\omega_t}^2dx-d\int_0^L\abs{u^3_t}^2-e\int_0^L\abs{y_t^3}^2dx\leq 0
\end{equation}
Now, we define the following energy space 
$$
\mathcal{H}=\left(H_0^1(0,L)\times L^2(0,L)\right)^2\times H_0^2(0,L)\times L^2(0,L)\times \left(H_0^1(0,L)\times L^2(0,L)\right)^2. 
$$
equipped with the inner product which induces the energy norm:
\begin{equation}\label{norm}
\begin{array}{lll}
\|U\|_{\mathcal{H}_{j}}^2&=&\|\left({u^1,v^1,y^1,z^1,\omega,\psi,u^3,v^3,y^3,z^3}\right)\|^1_{\HH_j}\\[0.1in]
&=&E_1h_1\|u_x^1\|^2+\rho_1h_1\|v^1\|^2+E_3h_3\|u^3_x\|^2+\rho_3h_3\|v^3\|^2dx+EI\|\omega_{xx}\|^2+\rho h\|\psi\|^2\\[0.1in]
&&+E_1I_1\|y^1_x\|^2+\rho_1I_1\|z^1\|^2+E_3I_3\|y^3_x\|^2+\rho_3I_3\|z^3\|^2+G_1h_1\|\omega_x+y^1\|^2\\[0.1in]
&&+\|\tau\|^2+G_3h_3\|\omega_x+y^3\|^2.
\end{array}
\end{equation}
It is clear that the above equality is {an} equivalent norm  on $\mathcal{H}$. Now, we define an unbounded linear {operator} $\mathcal{A}:D(\mathcal{A})\subset { \mathcal{H}\rightarrow \mathcal{H}}$, by 
\begin{equation}\label{Operators}
\mathcal{A}\begin{pmatrix}u^1\\ v^1\\ y^1\\ z^1\\ \omega\\ \psi\\ u^3\\ v^3\\ y^3\\ z^3\end{pmatrix}=\begin{pmatrix}
v^1\\ (\rho_1h_1)^{-1}\left[E_1h_1u^1_{xx}+\tau -a v^1\right]\\ z^1\\ (\rho_1I_1)^{-1}\left[E_1I_1y^1_{xx}+\frac{h_1}{2}\tau-G_1h_1(\omega_x+y^1)-bz^1\right]\\ \psi\\ (\rho h)^{-1}\left[-EI\omega_{xxxx}+G_1h_1(\omega_x+y^1)_x+G_3h_3(\omega_x+y^3)_x+h_2\tau_x-c\psi\right]\\ v^3\\ (\rho_3 h_3)^{-1}\left[E_3h_3u^3_{xx}-\tau-dv^3\right]\\ z^3\\ (\rho_3I_3)^{-1}\left[E_3I_3y_{xx}^3+\frac{h_3}{2}\tau-G_3h_3(\omega_x+y^3)-ez^3\right] 
\end{pmatrix}
\end{equation}
with domain 
$$
D(\mathcal{A})=\left[\left(H^2(0,L)\cap H_0^1(0,L)\right)\times L^2(0,L)\right]^2\times (H^4(0,L)\cap H_0^2(0,L))\times L^2(0,L)\times \left[\left(H^2(0,L)\cap H_0^1(0,L)\right)\times L^2(0,L)\right]^2.
$$
If $U=(u^1,u^1_t,y^1,y^1_t,\omega,\omega_t,u^3,u^3_t,y^3,y^3_t)$ is a regular solution of system \eqref{LB-1}-\eqref{LB-3}, then the system can be rewritten as { an} evolution equation on the Hilbert space $\mathcal{H}$ given by 
\begin{equation}\label{evolution}
U_t=\mathcal{A} U,\quad U(0)=U_0,
\end{equation}
where $U_0=(u^1_0,u^1_1,y^1_0,y^1_1,\omega_0,\omega_1,u^3_0,u^3_1,y^3_0,y^3_1)$. 
It is easy to see that for all $U=\left(u^1,v^1,y^1,z^1,\omega,\psi,u^3,v^3,y^3,z^3\right)\in D(\mathcal{A})$, {we have}  
$$
\Re\left(\left<\mathcal{A}U,U\right>\right)=-a\int_0^L\abs{v^1}^2dx-b\int_0^L\abs{z^1}-c\int_0^L\abs{\psi}^2dx-d\int_0^L\abs{v^3}^2dx-e\int_0^L\abs{z^3}^2dx\leq 0,
$$
which implied that $\mathcal{A}$ is dissipative. Now, let $F=(f_1,\cdots,f_{10})\in \mathcal{H}$, by using the Lax-Milgram theorem it is easy to show that the existence of $U\in D(\mathcal{A})$, solution of the equation 
$$
-\mathcal{A}U=F.
$$
Then, the unbounded linear operator $\mathcal{A}$ is m-dissipative in the energy space $\mathcal{H}$ and consequently $0\in \rho(\mathcal{A})$. {Thus, $\mathcal{A}$} generates a $C_0-$semigroup of contractions $\left(e^{t\mathcal{A}}\right)_{t\geq 0}$ following {the} Lummer-Phillips theorem. {The} solution of the {Cauchy problem} \eqref{evolution} admits the following representation 
$$
U(t)=e^{t\mathcal{A}}U_0,\quad t\geq 0,
$$
which leads to the well-posedness of \eqref{evolution}. Hence, we have the following result. 
\begin{theoreme}
Let $U_0\in \mathcal{H}$, then {system} \eqref{evolution} admits a unique weak solution $U$ satisfies 
$$
U \in C^0\left(\mathbb{R}^+,\mathcal{H}\right).  
$$
Moreover, if $U_0\in D(\mathcal{A})$, then problem \eqref{evolution} admits a unique strong solution $U$ satisfies 
$$
U\in C^1\left(\mathbb{R}^+,\mathcal{H}\right)\cap C^0\left(\mathbb{R}^+,D(\mathcal{A})\right).
$$
\end{theoreme}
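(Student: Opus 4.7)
The plan is a direct application of the Lumer--Phillips theorem combined with classical semigroup regularity results, building on the verifications already carried out in the paragraphs preceding the statement. Two ingredients have essentially been established: the dissipativity identity $\Re\langle \mathcal{A} U, U\rangle \le 0$ for all $U \in D(\mathcal{A})$, and the claim that $-\mathcal{A}: D(\mathcal{A}) \to \mathcal{H}$ is surjective (sketched via Lax--Milgram). Together with the density of $D(\mathcal{A})$ in $\mathcal{H}$, which is immediate because each factor of $D(\mathcal{A})$ contains $C_c^\infty(0,L)$ as a dense subspace of the corresponding factor of $\mathcal{H}$, these facts imply that $\mathcal{A}$ is m-dissipative on $\mathcal{H}$.

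To make the range condition fully rigorous, I would expand the Lax--Milgram step as follows. Given $F=(f_1,\dots,f_{10})\in\mathcal{H}$, the five odd-indexed equations of $-\mathcal{A}U=F$ directly prescribe $v^1=-f_1$, $z^1=-f_3$, $\psi=-f_5$, $v^3=-f_7$, $z^3=-f_9$, which already have the correct regularity. The even-indexed equations then become a coupled elliptic system for $(u^1,y^1,\omega,u^3,y^3)\in V:=(H^1_0(0,L))^2\times H^2_0(0,L)\times(H^1_0(0,L))^2$, and I would reformulate it as a variational problem $a(U,\Phi)=L(\Phi)$ for all $\Phi\in V$. The sesquilinear form $a$ is the sum of the diagonal elliptic terms $E_1h_1\int u^1_x\overline{\tilde u^1_x}dx$ (and the analogues for $u^3,y^1,y^3$), the biharmonic term $EI\int\omega_{xx}\overline{\tilde\omega_{xx}}dx$, the two non-negative coupling terms $G_ih_i\int(\omega_x+y^i)\overline{(\tilde\omega_x+\tilde y^i)}dx$ for $i=1,3$, and $\int\tau\overline{\tilde\tau}dx$. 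Each contribution is non-negative and the diagonal elliptic terms already control the $V$-norm, so both continuity and coercivity follow. Lax--Milgram then yields a unique weak solution, and one-dimensional elliptic regularity upgrades it to an element of $D(\mathcal{A})$.

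With $\mathcal{A}$ m-dissipative on the Hilbert space $\mathcal{H}$, the Lumer--Phillips theorem provides the $C_0$-semigroup of contractions $(e^{t\mathcal{A}})_{t\ge 0}$. Setting $U(t):=e^{t\mathcal{A}}U_0$, the two conclusions of the theorem are the standard semigroup dichotomy: $U_0\in\mathcal{H}$ gives a unique mild solution in $C^0(\mathbb{R}^+;\mathcal{H})$, while $U_0\in D(\mathcal{A})$ gives a unique classical solution in $C^1(\mathbb{R}^+;\mathcal{H})\cap C^0(\mathbb{R}^+;D(\mathcal{A}))$.

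The only nontrivial step is organizing the coercivity estimate for the coupled form $a$: the shear stress $\tau=-u^1-\tfrac{h_1}{2}y^1+h_2\omega_x+u^3-\tfrac{h_3}{2}y^3$ depends on all five unknowns at once, so one must be careful to separate the diagonal Dirichlet and clamped elliptic norms from the non-negative coupling contributions before invoking Lax--Milgram. Once the form is recognized as a non-negative perturbation of the product of standard elliptic forms, the rest of the argument is bookkeeping, and the m-dissipativity of $\mathcal{A}$ follows.
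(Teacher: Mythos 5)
Your proposal follows exactly the paper's route: dissipativity plus the Lax--Milgram range condition give m-dissipativity, Lumer--Phillips yields the contraction semigroup, and the standard mild/classical solution dichotomy finishes the proof. The only difference is that you spell out the variational formulation and coercivity of the coupled elliptic system, which the paper dismisses with ``it is easy to show''; your expanded version is correct, since the form you describe is precisely the position part of the energy norm \eqref{norm}, already asserted to be equivalent to the natural norm on $\mathcal{H}$.
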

$\linebreak$
Now, we shall analyze the strong stability of system \eqref{evolution}. The main result of this section is the following theorem. 
\begin{theoreme}\label{strong}
Assume that $(a,b>0\ \text{and}\ c=e=d=0)$ or $(a,c>0\ \text{and}\ b=e=d=0\ \text{and}\ G_1\neq G_3)$ or $(b,c>0\ \text{and}\ a=e=d=0\ \text{and}\ \frac{E_1}{\rho_1}\neq \frac{E_3}{\rho_3})$ or $(b,e>0\ \text{and}\ a=c=d=0\ \text{and}\ (G_1\neq G_3\ \text{and}\ \frac{E_1}{\rho_1}\neq \frac{E_3}{\rho_3}))$ or $(a,e>0\ \text{and}\ b=c=d=0)$. Then, the $C_0-$semigroup of contractions $(e^{t\mathcal{A}})_{t\geq 0}$ is strongly stable on $\mathcal{H}$ in the sense that $\displaystyle{\lim_{ { t}\to +\infty}\|e^{t\mathcal{A}}U_0\|_{\mathcal{H}}}=0$. 
\end{theoreme}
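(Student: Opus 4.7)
The plan is to invoke the Arendt--Batty--Lyubich--Vu criterion: a bounded $C_0$-semigroup on a reflexive Banach space is strongly stable as soon as its generator has no eigenvalue on the imaginary axis and $\sigma(\mathcal{A})\cap i\mathbb{R}$ is countable. Since $0\in\rho(\mathcal{A})$ has already been established and the embedding $D(\mathcal{A})\hookrightarrow\mathcal{H}$ is compact by Rellich--Kondrachov applied componentwise, $\mathcal{A}^{-1}$ is compact and $\sigma(\mathcal{A})$ is a discrete set of eigenvalues of finite multiplicity. It therefore suffices to prove $i\mathbb{R}\cap\sigma_p(\mathcal{A})=\emptyset$.

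First I would fix $\lambda\in\mathbb{R}\setminus\{0\}$ and assume $\mathcal{A}U=i\lambda U$ for some $U=(u^1,v^1,y^1,z^1,\omega,\psi,u^3,v^3,y^3,z^3)\in D(\mathcal{A})$, aiming to conclude $U=0$; the case $\lambda=0$ is excluded by $0\in\rho(\mathcal{A})$. Taking the real part of $\langle\mathcal{A}U,U\rangle_{\mathcal{H}}=i\lambda\|U\|_{\mathcal{H}}^2$ and using the dissipation identity displayed just before the theorem forces every velocity component weighted by a positive damping coefficient to vanish in $L^2(0,L)$. Because $v^j=i\lambda\cdot(\text{associated displacement})$ with $\lambda\neq 0$, the corresponding displacement vanishes as well. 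Hence, in each of the five cases, two of the five variables in $\{u^1,y^1,\omega,u^3,y^3\}$ together with their first time derivatives are already known to be identically zero before we use the coupling.

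Next, for every case I would propagate this vanishing through the remaining equations. The first or fourth equation typically gives $\tau\equiv 0$; one of the second, third, or fifth equations then collapses to an algebraic identity (for instance $G_1h_1y^1+G_3h_3y^3=0$, obtained from the transverse equation after integration against homogeneous Dirichlet data) that pins one surviving displacement to a constant multiple of another. Substituting back into the two second-order equations still available produces an over-determined elliptic problem in which a single scalar unknown $\varphi$ must simultaneously satisfy two constant-coefficient eigenvalue problems $\varphi''=\mu_1\varphi$ and $\varphi''=\mu_2\varphi$ on $(0,L)$ with homogeneous Dirichlet (or clamped) boundary data. Equating the coefficients produces a scalar identity in the physical parameters which, in every case, reduces either to the impossibility $G_jh_j=0$ or $E_jh_j=0$, or to a relation directly excluded by the hypothesis $G_1\neq G_3$ or $\tfrac{E_1}{\rho_1}\neq\tfrac{E_3}{\rho_3}$. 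The only remaining option is $\varphi\equiv 0$, which then propagates to the whole vector $U$.

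The main obstacle will be the case $a,c>0$ with $b=d=e=0$, in which neither wave equation is directly damped and $\tau\equiv 0$ alone does not close the system. There the key step is to derive the algebraic relation $y^3=-\tfrac{G_1h_1}{G_3h_3}y^1$ from the transverse equation and observe that the resulting $u^3=\tfrac{h_1(G_3-G_1)}{2G_3}y^1$ is genuinely non-zero precisely because $G_1\neq G_3$; this makes the fourth equation usable and produces the required contradiction against equations two and five. The analogous delicate points in the cases governed by $\tfrac{E_1}{\rho_1}\neq\tfrac{E_3}{\rho_3}$ arise when matching the longitudinal wave speeds rather than the shear moduli, and the real bookkeeping challenge is to verify that in each of the five configurations the relevant hypothesis of the theorem is indeed invoked exactly where the over-determined system would otherwise admit a non-trivial Dirichlet eigenfunction.
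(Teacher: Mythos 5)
Your proposal follows essentially the same route as the paper: Arendt--Batty with a compact resolvent reduces the problem to excluding imaginary eigenvalues, the dissipation identity kills the two damped velocity components (hence the corresponding displacements since $\lambda\neq 0$), and the vanishing is then propagated through the coupled equations, with the hypotheses $G_1\neq G_3$ and $\tfrac{E_1}{\rho_1}\neq\tfrac{E_3}{\rho_3}$ entering exactly where you place them (e.g.\ the relations $y^1=-\tfrac{G_3h_3}{G_1h_1}y^3$ and $u^3=\tfrac{h_3}{2G_1}(G_1-G_3)y^3$ in the $a,c>0$ case, and the subtraction of two constant-coefficient Dirichlet eigenvalue problems elsewhere). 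This matches the paper's proof case by case.
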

\begin{proof}
Since the resolvent of $\mathcal{A}$ is compact in $\mathcal{H}$, then according to Arendt-Batty theorem see (Page 837 in \cite{Batty01}), system \eqref{LB-1}-\eqref{FB} is strongly stable if and only if $\mathcal{A}$ doesn't have pure imaginary eigenvalues, that is, $\sigma(\mathcal{A})\cap i\mathbb{R}=\emptyset$. We have {already shown that $0\in \rho(\mathcal{A})$, and} still need to show $\sigma(\mathcal{A})\cap i\mathbb{R^{\ast}}=\emptyset$ . For this aim, suppose by contradiction that there exists  $\la\in \mathbb{R}^{\ast}$ and $U\in D(\mathcal{A})\backslash \{0\} $ such that 
\begin{equation}\label{LB-st1}
\mathcal{A}U=i\la U.
\end{equation}
Equivalently, we have 
\begin{equation}\label{LB-st2}
v^1=i\la u^1,\ v^3=i\la u^3,\ \psi=i\la \omega,\ z^1=i\la y^1 \quad \text{and} \quad z^3=i\la y^3 
\end{equation}
and 
\begin{eqnarray}
\la^2u^1+(\rho_1h_1)^{-1}\left[E_1h_1u^1_{xx}+\tau -ia\la u^1\right]&=&0,\label{1LB-st}\\[0.1in]
\la^2 y^1+(\rho_1I_1)^{-1}\left[E_1I_1y^1_{xx}+\frac{h_1}{2}\tau-G_1h_1(\omega_x+y^1)-ib\la y^1\right]&=&0,\label{2LB-st}\\[0.1in]
\la^2\omega+(\rho h)^{-1}\left[-EI\omega_{xxxx}+G_1h_1(\omega_x+y^1)_x+G_3h_3(\omega_x+y^3)_x+h_2\tau_x-ic\la \omega\right]&=&0,\label{3LB-st}\\[0.1in]
\la^2u^3+(\rho_3 h_3)^{-1}\left[E_3h_3u^3_{xx}-\tau-id\la u^3\right]&=&0,\label{4LB-st}\\[0.1in]
\la^2y^3+(\rho_3I_3)^{-1}\left[E_3I_3y_{xx}^3+\frac{h_3}{2}\tau-G_3h_3(\omega_x+y^3)-ie\la y^3\right]&=&0\label{5LB-st}.
\end{eqnarray}
\textbf{Case 1.} Suppose that ($a,b>0$ and $c=0$).
A straightforward calculation gives 
$$
{ 0=\Re\left<i\la U,U\right>_{\mathcal{H}}=\Re\left<\mathcal{A}U,U\right>_{\mathcal{H}}=-a\|v^1\|^2-b\|z^1\|^2}.
$$
Consequently, we deduce that 
\begin{equation}\label{LB-st4}
v^1=z^1=u^1=y^1=0\quad \text{in}\quad (0,L).
\end{equation}
Using \eqref{LB-st4} and \eqref{1LB-st} and the definition of $\tau$, we get 
\begin{equation}\label{LB-st5}
\tau=u^3+h_2\omega_x-\frac{h^3}{2}y^3=0.
\end{equation}
Inserting \eqref{LB-st5} and  \eqref{LB-st4} in  \eqref{2LB-st} and by using the fact that  $\omega(0)=0$, we obtain 
\begin{equation}\label{LB-st6}
\omega=0\quad \text{in}\quad (0,L). 
\end{equation}
Using \eqref{LB-st4}-\eqref{LB-st6} in  \eqref{3LB-st}, and using the fact that $y^3(0)=0$, we get 
\begin{equation}\label{LB-st7}
y^3=0.
\end{equation}
Equations \eqref{LB-st5}-\eqref{LB-st7}, implies that 
\begin{equation*}
u^3=0.
\end{equation*}
Thus, $U=0$ and consequently $\mathcal{A}$ has no pure imaginary eigenvalues.\\
\textbf{Case 2.} Suppose that ($a,c>0$ and $b=0$ and $G_1\neq G_3$). A straightforward calculation gives 
$$
{ 0=\Re\left<i\la U,U\right>_{\mathcal{H}}=\Re\left<\mathcal{A}U,U\right>_{\mathcal{H}}=-a\|v^1\|^2-c\|\psi\|^2}.
$$
Consequently, we deduce that 
\begin{equation}\label{LB-Case2-st1}
v^1=u^1=\psi=\omega=0\quad \text{in}\quad (0,L).
\end{equation}
Using \eqref{LB-Case2-st1},  \eqref{1LB-st} and the definition of $\tau$, we get 
\begin{equation}\label{LB-Case2-st2}
\tau=u^3-\frac{h_1}{2}y^1-\frac{h_3}{2}y^3=0.
\end{equation}
Using  \eqref{3LB-st}, \eqref{LB-Case2-st1}, \eqref{LB-Case2-st2} and the fact that $y^1(0)=y^3(0)=0$, we get 
\begin{equation}\label{LB-Case2-st3}
y^1=-\frac{G_3h_3}{G_1h_1}y^3\quad \text{and}\quad u^3=\frac{h_3}{2G_1}\left(G_1-G_3\right)y^3. 
\end{equation}
Using \eqref{4LB-st}, \eqref{5LB-st} and \eqref{LB-Case2-st2},\eqref{LB-Case2-st3}, the fact that $G_1\neq G_3$ we get 
\begin{equation*}
\la^2y^3+\frac{E_3}{\rho_3}y^3_{xx}=0\quad \text{and}\quad \left(\la^2-\frac{G_3h_3}{\rho_3I_3}\right)y^3+\frac{E_3}{\rho_3}y_{xx}^3=0
\end{equation*}
Subtract the above two equations, we obtain 
\begin{equation}\label{LB-Case2-st4}
y^3=0.
\end{equation}
Finally, using \eqref{LB-Case2-st4} and \eqref{LB-Case2-st3}, we get $y^1=u^3=0$. Thus, $U=0$ and consequently $\mathcal{A}$ has no pure imaginary eigenvalues.\\
\textbf{Case 3.} Suppose that $(b,c>0\ \text{and}\ a=0\ \text{and}\ \frac{E_1}{\rho_1}\neq \frac{E_3}{\rho_3})$. A straightforward calculation gives
\begin{equation}\label{LB-Case3-st1}
z^1=y^1=\omega=\psi=0\quad \text{in}\quad (0,L).
\end{equation}
Using \eqref{LB-Case3-st1} and  \eqref{2LB-st} and the definition of $\tau$, we get 
\begin{equation}\label{LB-Case3-st2}
\tau=-u^1+u^3-\frac{h_3}{2}y^3=0.
\end{equation}
Using \eqref{LB-Case3-st1}, \eqref{LB-Case3-st2} in  \eqref{3LB-st} and the fact that $y^3(0)=0$, we get 
\begin{equation}\label{LB-Case3-st3}
y^3=0.
\end{equation}
Inserting \eqref{LB-Case3-st3} in \eqref{LB-Case3-st2}, we obtain 
\begin{equation}\label{LB-Case3-st4}
u^1=u^3.
\end{equation}
Now, using \eqref{LB-Case3-st4} and  \eqref{LB-Case3-st2} in  \eqref{1LB-st} and \eqref{4LB-st}, we get 
\begin{equation}\label{LB-Case3-st5}
\la^2u^3+\frac{E_1}{\rho_1}u^3_{xx}=0\quad \text{and}\quad \la^2u^3+\frac{E_1}{\rho_1}u^3_{xx}=0.
\end{equation}
Subtract the above two equations, we get 
\begin{equation*}
\left(\frac{E_1}{\rho_1}-\frac{E_3}{\rho_3}\right)u^3_{xx}=0.
\end{equation*}
Using the fact that $\frac{E_1}{\rho_1}\neq \frac{E_3}{\rho_3}$ and the facts that $u^3(0)=u^3(L)=0$, we get 
$$
u^3=0.
$$
Thus, $U=0$ and consequently $\mathcal{A}$ has no pure imaginary eigenvalues.\\
\textbf{Case 4.} Suppose that $(b,e>0\ \text{and}\ a=c=d=0\ \text{and}\ (G_1\neq G_3\ \text{and}\ \frac{E_1}{\rho_1}\neq \frac{E_3}{\rho_3}))$. A straightforward calculation gives 
\begin{equation}\label{4be}
z^1=y^1=z^3=y^3=0\quad \text{in}\quad (0,L).
\end{equation}
From \eqref{2LB-st},\eqref{5LB-st} and \eqref{4be}, we get 
\begin{equation}\label{4be1}
\frac{1}{2}\tau -G_1\omega_x=0\quad \text{and}\quad \frac{1}{2}\tau-G_3\omega_x=0.
\end{equation}
Subtracting the two equations in \eqref{4be1} and using the fact that $G_3\neq G_1$, we get 
\begin{equation}\label{4be2}
\omega_x=0.
\end{equation}
Using \eqref{4be1} and  the fact that $\omega(0)=0$, we get 
\begin{equation}\label{4be3}
\omega=0\quad \text{and}\quad \tau=0.
\end{equation}
Using the definition of $\tau$ and \eqref{4be} and \eqref{4be3}, we get 
\begin{equation}\label{4be4}
u^1=u^3.
\end{equation}
Now, using \eqref{1LB-st}, \eqref{4LB-st} and \eqref{4be3}, we get 
\begin{equation*}
\la^2u^1+\frac{E_1}{\rho_1}u^1_{xx}=0\quad \text{and}\quad \la^2u^3+\frac{E_3}{\rho_3}u^3_{xx}=0.
\end{equation*}
Subtracting the above two equations and using \eqref{4be4}, we get 
\begin{equation}\label{4be5}
\left(\frac{E_1}{\rho_1}-\frac{E_3}{\rho_3}\right)u^1_{xx}=0.
\end{equation}
Using the facts that $\frac{E_1}{\rho_1}\neq \frac{E_3}{\rho_3}$ and $u^1(0)=u^1(L)=0$ in \eqref{4be5}, we obtain 
$$
u^1=u^3=0.
$$
Thus, $U=0$ and consequently $\mathcal{A}$ has no pure imaginary eigenvalues.\\[0.1in]
\textbf{Case 5.} Suppose that $(a,e>0\ \text{and}\ b=c=d=0)$. A straightforward calculation gives 
$$
0=\Re\left(\left<i\la U,U\right>_{\mathcal{H}}\right)=\Re\left(\left<\mathcal{A}U,U\right>_{\mathcal{H}}\right)=-a\|v^1\|^2-e\|z^3\|^2=0.
$$
Consequently, we deduce that 
\begin{equation}\label{5LB-st4}
v^1=z^3=u^1=y^3=0\quad \text{in}\quad (0,L).
\end{equation}
Using \eqref{5LB-st4} and \eqref{1LB-st} and the definition of $\tau$, we get 
\begin{equation}\label{5LB-st5}
\tau=u^3+h_2\omega_x-\frac{h^1}{2}y^1=0.
\end{equation}
Inserting \eqref{5LB-st5} and  \eqref{5LB-st4} in  \eqref{5LB-st} and by using the fact that  $\omega(0)=0$, we obtain 
\begin{equation}\label{5LB-st6}
\omega=0\quad \text{in}\quad (0,L). 
\end{equation}
Using \eqref{5LB-st4}-\eqref{5LB-st6} in  \eqref{3LB-st}, and using the fact that $y^1(0)=0$, we get 
\begin{equation}\label{5LB-st7}
y^1=0.
\end{equation}
Equations \eqref{5LB-st5}-\eqref{5LB-st7}, implies that 
\begin{equation*}
u^3=0.
\end{equation*}
Thus, $U=0$ and consequently $\mathcal{A}$ has no pure imaginary eigenvalues.
\end{proof}
\begin{theoreme}\label{INSTABLE1}
If $(a,c>0\ \text{and}\ b=d=e=0)$ and $G_1=G_3$, then system \eqref{evolution} is unstable.
\end{theoreme}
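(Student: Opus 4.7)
The strategy is to refute strong stability by exhibiting a pure imaginary eigenvalue of $\mathcal{A}$. Since the resolvent of $\mathcal{A}$ is compact, the Arendt--Batty criterion used in the proof of Theorem~\ref{strong} shows that the failure of strong stability is equivalent to $\sigma(\mathcal{A})\cap i\mathbb{R}^{\ast}\neq\emptyset$, so it suffices to produce one $\lambda\in\mathbb{R}^{\ast}$ and one nonzero $U\in D(\mathcal{A})$ with $\mathcal{A}U=i\lambda U$.

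The first step is to repeat verbatim the dissipation computation of Case~2 of Theorem~\ref{strong}: the hypotheses $a,c>0$ and $b=d=e=0$ give
\begin{equation*}
0=\Re\langle \mathcal{A}U,U\rangle_{\mathcal{H}}=-a\|v^1\|^2-c\|\psi\|^2,
\end{equation*}
so $v^1=\psi=0$ and hence $u^1=\omega=0$. Then \eqref{1LB-st} yields $\tau=0$, and \eqref{3LB-st} becomes $G_1 h_1 y^1_x+G_3 h_3 y^3_x=0$. The decisive difference from Case~2 is that now $G_1=G_3=:G$, so this collapses to the single relation $h_1 y^1_x+h_3 y^3_x=0$; integrating and using $y^1(0)=y^3(0)=0$ produces the one-parameter family $y^1=-(h_3/h_1)\,y^3$. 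Combined with $\tau=0$, this forces $u^3=0$, but \emph{not} $y^3=0$, which is precisely where the argument of Case~2 broke.

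Next, substituting $u^1=\omega=u^3=0$, $\tau=0$ and $y^1=-(h_3/h_1)y^3$ into \eqref{2LB-st} and \eqref{5LB-st} reduces both to Dirichlet eigenvalue problems of the form
\begin{equation*}
y^j_{xx}+\frac{\rho_j I_j\lambda^2-G h_j}{E_j I_j}\,y^j=0,\qquad j=1,3,
\end{equation*}
on $(0,L)$. Proportionality of $y^1$ and $y^3$ forces the two coefficients to be equal to a common $(n\pi/L)^2$, so I would seek $\lambda\in\mathbb{R}^{\ast}$ and $n\in\mathbb{N}$ with
\begin{equation*}
\frac{\rho_1 I_1\lambda^2-G h_1}{E_1 I_1}=\frac{\rho_3 I_3\lambda^2-G h_3}{E_3 I_3}=\left(\frac{n\pi}{L}\right)^{2},
\end{equation*}
and then set $y^3(x)=\sin(n\pi x/L)$, $y^1=-(h_3/h_1)y^3$, all other components zero; checking that $U\in D(\mathcal{A})$ and $\mathcal{A}U=i\lambda U$ is then immediate from the construction.

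The main obstacle is this simultaneous matching of the two Dirichlet spectra. The equality $\beta_1(\lambda)=\beta_3(\lambda)$ either holds identically, which occurs when $\rho_1/E_1=\rho_3/E_3$ and $h_1/(E_1 I_1)=h_3/(E_3 I_3)$ (in which case every Dirichlet mode yields an eigenvalue and instability is automatic), or determines $\lambda^2$ uniquely as $G\bigl(h_1/(E_1 I_1)-h_3/(E_3 I_3)\bigr)/(\rho_1/E_1-\rho_3/E_3)$; in the latter situation one must still verify positivity of this $\lambda^2$ and that the resulting common value of $\beta_j(\lambda)$ lands on the Dirichlet spectrum $\{(n\pi/L)^2:n\in\mathbb{N}\}$. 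Handling this parameter bookkeeping cleanly—possibly via a genericity hypothesis on $L$ or by arguing that at least one admissible $n$ exists—is the delicate point of the write-up; the rest of the proof is mechanical verification.
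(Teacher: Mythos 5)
Your construction is essentially the one the paper itself uses: take $u^1=u^3=\omega=0$, $y^1=-(h_3/h_1)y^3$ and $y^3$ a Dirichlet sine mode, and observe that $G_1=G_3$ is exactly what makes the $\omega$-equation \eqref{3LB-st} and the relation $\tau=0$ compatible with a nonzero $y^3$. The differences are that you arrive at the ansatz by running the necessary conditions of Case~2 of Theorem~\ref{strong} forward (a cleaner motivation), and, more importantly, that you notice the ansatz must satisfy \emph{both} \eqref{2LB-st} and \eqref{5LB-st}. Since $y^1$ is proportional to $y^3$, these are two Dirichlet eigenvalue problems for the same function, compatible only if the two dispersion coefficients $\bigl(\rho_jI_j\lambda^2-Gh_j\bigr)/(E_jI_j)$, $j=1,3$, take a common value of the form $(n\pi/L)^2$. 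This is precisely where your write-up stops, and it is a genuine gap: without resolving it you have not produced an eigenvector.

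That said, the gap is the paper's, not one you introduced. The paper substitutes the same ansatz, records only the single ODE \eqref{unstable2} coming from the $y^3$-equation \eqref{5LB-st}, chooses $\lambda_n$ from that equation alone, and never verifies \eqref{2LB-st}; for generic parameters with $G_1=G_3$ that $\lambda_n$ violates the $y^1$-dispersion relation, so the displayed $U$ is not an eigenvector. Worse, your necessary-condition analysis shows that \emph{every} candidate imaginary eigenvector has this form, so when the matching condition fails for every $n$ there are no imaginary eigenvalues at all and, by the Arendt--Batty argument of Theorem~\ref{strong}, the semigroup is then strongly stable; the theorem as stated therefore needs additional hypotheses. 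Instability does follow by your route when the two ODEs coincide (e.g.\ $\rho_1/E_1=\rho_3/E_3$ and $h_1/(E_1I_1)=h_3/(E_3I_3)$, giving infinitely many imaginary eigenvalues, in the spirit of the fully symmetric hypotheses imposed in the subsequent theorem for the case $a,d>0$), or when the uniquely determined $\lambda^2$ is positive and its common dispersion value lands on the Dirichlet spectrum. You should state such a condition explicitly rather than defer it as ``bookkeeping''; no more than that can be extracted from this construction.
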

\begin{proof}
Set 
\begin{equation}\label{unstable1}
u^1(x,t)=u^3(x,t)=\omega(x,t)=0, y^3(x,t)=e^{i\la_n t}\phi^n(x)\quad \text{and}\quad  y^1(x,t)=-\frac{h_3}{h_1}y^3(x,t).
\end{equation}
Substituting \eqref{unstable1} in \eqref{evolution}, we obtain that 
\begin{equation}\label{unstable2}
\left\{\begin{array}{l}
\left(\la^2-\frac{G_3h_3}{\rho_3I_3}\right)\phi^n+\frac{E_3}{\rho_3}\phi^n_{xx}=0,\\
\phi(0)=\phi(L)=0.
\end{array}
\right.
\end{equation}
Then, choosing $\la_n=\pm\sqrt{\frac{n^2\pi^2}{L^2}\frac{E_3}{\rho_3}+\frac{G_1h_3}{\rho_3I_3}}$, we obtain 
$$
\phi(x)=A \sin\left(\frac{n\pi}{L}x\right).
$$
{This} implies that there are infinitely many eigenvalues $\pm i\la_n$ on the imaginary axis. 
\end{proof}
\begin{theoreme}\label{INSTABLE2}
If $\left((b,c>0\ \text{and}\ a=d=e=0)\ \text{and}\  \frac{E_1}{\rho_1}=\frac{E_3}{\rho_3}\right)$ or ${\left(b,e>0\ \text{and}\ a=c=d=0\ \text{and}\ (G_1\neq G_3\ \text{and}\ \frac{E_1}{\rho_1}= \frac{E_3}{\rho_3})\right)}$ or ${\left(b,e>0\ \text{and}\ a=c=d=0\ \text{and}\ (G_1= G_3\ \text{and}\ \frac{E_1}{\rho_1}= \frac{E_3}{\rho_3})\right)}$ , then system \eqref{evolution} is unstable.
\end{theoreme}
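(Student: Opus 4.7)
The plan is to mimic the construction used in Theorem \ref{INSTABLE1} and, for each of the three hypotheses, explicitly exhibit an infinite family of purely imaginary eigenvalues of $\mathcal{A}$. Since the underlying semigroup is a contraction semigroup and the resolvent is compact, the existence of even one eigenvalue on $i\mathbb{R}^*$ already rules out strong stability (and \emph{a fortiori} any form of uniform decay), so producing a sequence $\{i\lambda_n\}$ will close the theorem.

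The key unifying observation is that all three hypotheses share the property $a=d=0$: neither longitudinal displacement $u^1$ nor $u^3$ is directly damped, while the shared ratio $\frac{E_1}{\rho_1}=\frac{E_3}{\rho_3}$ allows the two layers to vibrate synchronously. This suggests the ansatz
\begin{equation*}
u^1(x,t)=u^3(x,t)=e^{i\lambda_n t}\sin\!\left(\tfrac{n\pi}{L}x\right),\qquad \omega\equiv y^1\equiv y^3\equiv 0,
\end{equation*}
for which the defining quantity $\tau=-u^1-\tfrac{h_1}{2}y^1+h_2\omega_x+u^3-\tfrac{h_3}{2}y^3$ vanishes identically. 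With $\tau\equiv 0$ and all rotational/transverse components zero, equations \eqref{2LB-st}, \eqref{3LB-st} and \eqref{5LB-st} reduce to $0=0$ irrespective of which of the coefficients $b,c,e$ are turned on, while \eqref{1LB-st} and \eqref{4LB-st} collapse (using $a=d=0$) to the two free wave equations
\begin{equation*}
\lambda_n^2 u^1+\frac{E_1}{\rho_1}u^1_{xx}=0,\qquad \lambda_n^2 u^3+\frac{E_3}{\rho_3}u^3_{xx}=0,
\end{equation*}
which, thanks to the assumption $\frac{E_1}{\rho_1}=\frac{E_3}{\rho_3}$, are simultaneously satisfied by the ansatz provided $\lambda_n=\pm\sqrt{\frac{n^2\pi^2}{L^2}\frac{E_1}{\rho_1}}$.

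I would then conclude by noting that for each $n\in\mathbb{N}^*$ the construction gives a nontrivial element of $D(\mathcal{A})$ (since $\sin(n\pi\cdot/L)\in H^2\cap H^1_0$ satisfies the Dirichlet conditions imposed on $u^1,u^3$) with $\mathcal{A}U_n=i\lambda_n U_n$, hence $\sigma(\mathcal{A})\cap i\mathbb{R}^*\neq\emptyset$ and $(e^{t\mathcal{A}})_{t\ge 0}$ fails to be strongly stable. Since the three hypotheses of the theorem differ only in the assumptions on $G_1$ versus $G_3$ (which, under the ansatz, enter only through the cancelled terms $G_1h_1(\omega_x+y^1)$ and $G_3h_3(\omega_x+y^3)$), the same single construction handles all three cases at once; this is in fact the mild subtlety to highlight, namely that the condition $G_1\ne G_3$ or $G_1=G_3$ is simply immaterial here because the shear couplings vanish on the chosen eigenvector. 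There is no real analytic obstacle: the only verification needed is the routine check that \eqref{1LB-st}--\eqref{5LB-st} are all satisfied, which follows from $\tau\equiv 0$ and $a=d=0$.
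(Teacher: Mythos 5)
Your construction is exactly the one the paper uses: the ansatz $u^1=u^3=e^{i\lambda_n t}\phi^n$ with $\omega=y^1=y^3=0$, $\tau\equiv 0$, reducing to the Dirichlet wave eigenvalue problem and yielding the same sequence $\lambda_n=\pm\sqrt{E_3/\rho_3}\,n\pi/L$ of purely imaginary eigenvalues. Your additional remarks (that $a=d=0$ is what makes the undamped wave equations survive, and that the hypotheses on $G_1,G_3$ are immaterial because the shear couplings vanish on the eigenvector) are correct and merely make explicit what the paper leaves implicit.
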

\begin{proof}
Set 
\begin{equation}\label{UNSTABLE-Case3}
u^1(x,t)=u^3(x,t)=e^{i\la_nt}\phi^n(x)\quad \text{and}\quad y^1(x,t)=y^3(x,t)=\omega(x,t)=0.
\end{equation}
Substituting \eqref{UNSTABLE-Case3} in \eqref{evolution}, we obtain that 
\begin{equation}\label{UNSTABLE-Case3-1}
\left\{\begin{array}{l}
\displaystyle
\la^2\phi^n+\frac{E_3}{\rho_3}\phi^n_{xx}=0,\\
\phi^n(0)=\phi^n(L)=0.
\end{array}
\right.
\end{equation}
Then, choosing $\la_n=\pm\sqrt{\frac{E_3}{\rho_3}}\frac{n\pi}{L}$, we obtain 
$$
\phi^n(x)=B\sin\left(\frac{n\pi x}{L}\right).
$$ 
This implies that there are infinitely eigenvalues $\pm i\la_n$ on the imaginary axis.
\end{proof}
\begin{theoreme}
If $(a,d>0\ \ \text{and}\ \ b=c=e=0)$ and $E_1=E_3$, $\rho_1=\rho_3$, $G_1=G_3$\ $I_1=I_3$ and $h_1=h_3$, then system \eqref{evolution} is unstable
\end{theoreme}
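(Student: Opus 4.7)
The plan is to mimic the antisymmetric ansatz already used in the proof of Theorem \ref{INSTABLE1}, exploiting the full top--bottom symmetry of the coefficients to kill every contribution of the damped quantities. Since the dissipation acts only on $u^1_t$ and $u^3_t$, the natural attempt is to seek a family of nontrivial eigenfunctions in which $u^1 \equiv u^3 \equiv 0$ and simultaneously $\omega \equiv 0$ and $y^1 \equiv -y^3$, so that the damping terms $au^1_t$, $du^3_t$ are absent and the cross coupling through $\tau$ and through the beam equation collapses.

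First, substitute the ansatz
\[
u^1(x,t)=u^3(x,t)=0,\qquad \omega(x,t)=0,\qquad y^1(x,t)=-y^3(x,t)=e^{i\lambda_n t}\phi^n(x)
\]
into the definition of the shear stress. Because $h_1=h_3$, the two remaining contributions to $\tau$ cancel and $\tau\equiv 0$, so equations \eqref{1LB-1}$_1$ and \eqref{1LB-1}$_4$ are trivially satisfied. Next, plug the ansatz into the beam equation \eqref{1LB-1}$_3$: all terms except $-G_1h_1(\omega_x+y^1)_x-G_3h_3(\omega_x+y^3)_x$ vanish, and this remaining expression reduces to $-G_1h_1\,y^1_x-G_3h_3\,y^3_x=-(G_1h_1-G_3h_3)\,y^1_x$, which is zero by the assumption $G_1=G_3$ together with $h_1=h_3$. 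So equation \eqref{1LB-1}$_3$ is automatically satisfied, and no condition on $\omega$ needs to be solved.

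The two shear equations \eqref{1LB-1}$_2$ and \eqref{1LB-1}$_5$ each reduce, after using $\omega=\tau=0$ and $b=e=0$, to
\[
\rho_1 I_1\, y^1_{tt}-E_1 I_1\, y^1_{xx}+G_1 h_1\, y^1=0,
\]
and the analogous equation for $y^3$; by the equalities $\rho_1=\rho_3$, $E_1=E_3$, $I_1=I_3$, $G_1=G_3$, $h_1=h_3$, these become the same equation, and the antisymmetric relation $y^3=-y^1$ is preserved under evolution. Inserting $y^1=e^{i\lambda_n t}\phi^n(x)$ and imposing the Dirichlet conditions $\phi^n(0)=\phi^n(L)=0$ yields the eigenvalue problem
\[
\Bigl(\lambda^2-\tfrac{G_1 h_1}{\rho_1 I_1}\Bigr)\phi^n+\tfrac{E_1}{\rho_1}\,\phi^n_{xx}=0,\qquad \phi^n(0)=\phi^n(L)=0,
\]
which admits $\phi^n(x)=A\sin\!\bigl(\tfrac{n\pi x}{L}\bigr)$ with
\[
\lambda_n=\pm\sqrt{\tfrac{E_1}{\rho_1}\tfrac{n^2\pi^2}{L^2}+\tfrac{G_1 h_1}{\rho_1 I_1}}\,\in\,\mathbb{R}^{\ast}.
\]

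Finally, I would check that the resulting $U_n=(0,\,0,\,y^1_n,\,z^1_n,\,0,\,0,\,0,\,0,\,y^3_n,\,z^3_n)$ with $z^{1,3}_n=i\lambda_n y^{1,3}_n$ does lie in $D(\mathcal{A})$ and satisfies $\mathcal{A}U_n=i\lambda_n U_n$; this is routine since $\phi^n\in H^2\cap H^1_0$ and all compatibility conditions come from the Dirichlet/clamped setting. The existence of infinitely many $\pm i\lambda_n\in \sigma(\mathcal{A})\cap i\mathbb{R}^{\ast}$ then immediately contradicts strong stability, hence a fortiori exponential stability, so \eqref{evolution} is unstable. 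No step is particularly delicate; the only thing to be careful about is verifying that under \emph{all} five symmetry hypotheses the beam equation really is annihilated by the ansatz (rather than producing an additional scalar constraint), which is precisely why the combination $G_1=G_3$ \emph{and} $h_1=h_3$, together with $y^1=-y^3$, is essential.
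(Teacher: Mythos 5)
Your proposal is correct and follows essentially the same route as the paper: the identical antisymmetric ansatz $u^1=u^3=\omega=0$, $y^3=-y^1=-e^{i\lambda_n t}\phi^n$, leading to the eigenvalue problem $\bigl(\lambda^2-\tfrac{G_1h_1}{\rho_1 I_1}\bigr)\phi^n+\tfrac{E_1}{\rho_1}\phi^n_{xx}=0$ with Dirichlet conditions and the frequencies $\lambda_n=\pm\sqrt{\tfrac{n^2\pi^2}{L^2}\tfrac{E_1}{\rho_1}+\tfrac{G_1h_1}{\rho_1I_1}}$. Your additional verifications (that $\tau\equiv 0$ requires $h_1=h_3$, that the beam equation is annihilated because $G_1h_1=G_3h_3$, and that the remaining coefficient equalities make the two shear equations coincide) are exactly the checks the paper leaves implicit.
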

\begin{proof}
Set 
\begin{equation}\label{UNSTABLE-Case4}
u^1(x,t)=u^3(x,t)=\omega(x,t)=0,\ \ y^1(x,t)=e^{i\la_n}\phi^n(x)\quad \text{and}\quad y^3(x,t)=-y^1(x,t).
\end{equation}
Substituting \eqref{UNSTABLE-Case3} in \eqref{evolution}, we obtain that 
\begin{equation}\label{UNSTABLE-Case4-1}
\left\{\begin{array}{l}
\left(\la^2-\frac{G_1h_1}{\rho_1I_1}\right)\phi^n+\frac{E_1}{\rho_1}\phi^n_{xx}=0,\\
\phi(0)=\phi(L)=0.
\end{array}
\right.
\end{equation}
Then, choosing $\la_n=\pm\sqrt{\frac{n^2\pi^2}{L^2}\frac{E_1}{\rho_1}+\frac{G_1h_1}{\rho_1I_1}}$, we obtain 
$$
\phi(x)=C \sin\left(\frac{n\pi}{L}x\right).
$$
This implies that there are infinitely many eigenvalues $\pm i\la_n$ on the imaginary axis. 

\end{proof}
\begin{rem}
{  The first three cases of Theorem \ref{strong},  Theorem \ref{INSTABLE1} and the first case in Theorem \ref{INSTABLE2} give the necessary and sufficient conditions for the strong stability of group two. However, we only obtained sufficient conditions for strong stability and instability of some cases of group one.  
Thus, we will only consider the polynomial stability for the cases in group two in next section. }
\end{rem}
\section{Polynomial Stability}\label{PS123}
\noindent In this section, we show the influence of the physical coefficients on the stability of system \eqref{LB-1}-\eqref{FB} with respect to the  {locations} of the damping. For this aim, we distinguish the following three cases:\begin{enumerate}
\item[$\bullet$] \textbf{Case 1}:
\begin{equation}\label{A1} \tag{${\rm A_1}$}
(a,b>0\ \text{and}\ c=d=e=0). 
\end{equation}
\item[$\bullet$] \textbf{Case 2}: \begin{equation}\label{A2} \tag{${\rm A_2}$}(a,c>0\ \text{and}\ b=e=d=0\ \text{and}\ G_1\neq G_3).\end{equation}
\item[$\bullet$] \textbf{Case 3}: \begin{equation}\label{A3} \tag{${\rm A_3}$}(b,c>0\ \text{and}\ a=e=d=0\ \text{and}\ \frac{E_1}{\rho_1}\neq \frac{E_3}{\rho_3}).\end{equation}
\end{enumerate}
For this purpose, we will use a frequency domain {approach, namely the Theorem A.3 in the Appendix}.  Our main result in this section is the following theorems. 
\begin{theoreme}\label{POL-ab}
Assume that \eqref{A1} holds. The $C_0-$semigroup $\left(e^{t\mathcal{A}}\right)_{t\geq1}$ is polynomially stable; i.e. there exists constant $C>0$ such that for every $U_0\in D(\mathcal{A})$, we have 
\begin{equation}\label{pol-decay-ab}
E(t)\leq \frac{C}{t^{\frac{2}{\ell}}},\quad t>0,\ \forall U_0\in D(\mathcal{A}),
\end{equation}
where 
$$
\ell=\left\{\begin{array}{lll}
3&\text{if}&\frac{E_1}{\rho_1}=\frac{E_3}{\rho_3},\\[0.1in]
5&\text{if}&\frac{E_1}{\rho_1}\neq \frac{E_3}{\rho_3}.
\end{array}
\right.
$$
\end{theoreme}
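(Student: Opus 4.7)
The plan is to apply the frequency-domain characterization of polynomial stability (Theorem A.3 of the Appendix). Since Theorem \ref{strong} (Case 1) already yields $i\mathbb{R}\subset\rho(\mathcal{A})$, all that remains is the resolvent bound
\[
\|(i\lambda I-\mathcal{A})^{-1}\|_{\mathcal{L}(\mathcal{H})} = O(|\lambda|^{\ell}),\qquad |\lambda|\to\infty,
\]
with $\ell=3$ when $E_1/\rho_1 = E_3/\rho_3$ and $\ell=5$ otherwise. I would argue by contradiction: assume sequences $\lambda_n\in\mathbb{R}$, $|\lambda_n|\to\infty$, and $U_n\in D(\mathcal{A})$ with $\|U_n\|_{\mathcal{H}}=1$ such that $F_n := (i\lambda_n-\mathcal{A})U_n$ satisfies $\|F_n\|_{\mathcal{H}} = o(|\lambda_n|^{-\ell})$, then read off the ten component identities $v^1_n = i\lambda_n u^1_n - (f_1)_n$, and so on.

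The first block of estimates is forced by the damping: since $\Re\langle F_n, U_n\rangle_{\mathcal{H}} = a\|v^1_n\|^2 + b\|z^1_n\|^2$, I immediately obtain $\|v^1_n\|, \|z^1_n\| = o(|\lambda_n|^{-\ell/2})$, and then $\|u^1_n\|, \|y^1_n\| = o(|\lambda_n|^{-\ell/2 - 1})$ by using the $v^1$- and $z^1$-identities. Next I would test the resolvent identities for $v^1_n$ and $z^1_n$ against $\overline{u^1_n}$ and $\overline{y^1_n}$, integrate by parts using the Dirichlet conditions, and absorb the $\tau_n$-coupling using $\|U_n\|_{\mathcal{H}}=1$, obtaining sharp $H^1$ control of $u^1_n, y^1_n$. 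Feeding this into the $y^1$-equation \eqref{2LB-st} yields an estimate on the combination $\frac{h_1}{2}\tau_n - G_1 h_1(\omega_{n,x}+y^1_n)$, with a known power of $|\lambda_n|$.

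Propagation to the remaining five components would be driven by multiplier arguments on \eqref{3LB-st} (testing against $\overline{\omega_n}$ together with an appropriate $x$-multiplier to absorb boundary terms) and on \eqref{4LB-st}--\eqref{5LB-st}. The dichotomy between $\ell=3$ and $\ell=5$ should surface in the treatment of \eqref{4LB-st}: subtracting a suitable multiple of \eqref{1LB-st} produces a relation of the shape $(E_1/\rho_1 - E_3/\rho_3)\,u^3_{n,xx} + (\text{controlled}) = o(\cdots)$. When $E_1/\rho_1\ne E_3/\rho_3$ the coefficient is invertible and $\|u^3_{n,xx}\|$ is read off directly, at the price of factors of $|\lambda_n|$ that accumulate to $\ell=5$. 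When $E_1/\rho_1 = E_3/\rho_3$, the leading term vanishes, and one must instead exploit $\tau_n \approx 0$ together with the already-obtained smallness of $u^1_n, y^1_n$, inheriting $u^3_n$ directly from the quantitative kernel relation; this shortcut saves two powers of $|\lambda_n|$ and gives $\ell=3$. Summing all estimates yields $\|U_n\|_{\mathcal{H}}=o(1)$, contradicting $\|U_n\|_{\mathcal{H}}=1$.

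The main obstacle is precisely this last step: meticulous bookkeeping of every power of $|\lambda_n|$ lost at each integration by parts or Cauchy--Schwarz, so that the sharp exponents $3$ and $5$ emerge from the algebra of $E_1/\rho_1-E_3/\rho_3$ rather than from a crude estimate. A secondary technical complication is handling the boundary contributions when Dirichlet conditions on $u^1, u^3, y^1, y^3$ coexist with clamped conditions on $\omega$: the $x$-multiplier applied to the $EI\omega_{xxxx}$ term must be tuned so that the boundary traces either cancel or are absorbable, and one should anticipate using the classical hidden-regularity estimates for the Euler--Bernoulli beam to close the remaining gap.
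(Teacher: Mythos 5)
Your overall framework (Borichev--Tomilov, contradiction with a sequence $\lambda_n^{\ell}(i\lambda_n-\mathcal{A})U_n\to0$, dissipation estimates on $v^1,z^1,u^1,y^1$ first) matches the paper, and you assign the exponents to the correct cases. However, two essential mechanisms are missing or misdescribed. First, you never explain how the damping reaches the transversal displacement $\omega$: testing the beam equation against $\overline{\omega}$ only produces the equipartition $\|\psi\|^2-\frac{EI}{\rho h}\|\omega_{xx}\|^2=o(1)$, which by itself shows nothing. The paper's key step (Lemma \ref{L4-est}, Step 1) is to multiply the \emph{first} equation \eqref{pol2} by $\overline{\omega}_{xxx}$; since $\tau$ contains $h_2\omega_x$, the coupling term yields $\frac{h_2}{\rho_1h_1}\|\omega_{xx}\|^2$ up to $o(1)$, and only then does the $\overline{\omega}$-multiplier on \eqref{pol6} give $\|\psi\|^2=o(1)$. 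The boundary traces generated along the way are controlled by Gagliardo--Nirenberg interpolation ($|\omega_{xxx}(\zeta)|\le O(\lambda^{3/4})$, $|u^1_x(\zeta)|=o(\lambda^{-\ell/8-5/8})$), not by an $x$-multiplier or hidden-regularity estimates.

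Second, your account of the dichotomy would not close. Subtracting a multiple of \eqref{1LB-st} from \eqref{4LB-st} does not produce $(E_1/\rho_1-E_3/\rho_3)u^3_{xx}=(\text{controlled})$, because unlike in the strong-stability argument one cannot identify $u^1$ with $u^3$ quantitatively. What the paper actually does (Lemma \ref{L5}) is pair \eqref{pol2} with $\frac{E_3}{\rho_3}\overline{u^3_{xx}}$ and \eqref{pol4} with $\frac{E_1}{\rho_1}\overline{u^1_{xx}}$ (plus the analogous pairings involving $y^3$), subtract, and arrive at identities of the form
$$\lambda^2\frac{\rho_1h_1\rho_3}{E_3}\Bigl(\tfrac{E_3}{\rho_3}-\tfrac{E_1}{\rho_1}\Bigr)\Re\Bigl(\int_0^L u^3_x\overline{u^1_x}\,dx\Bigr)+\int_0^L\abs{u^3_x}^2dx-\tfrac{h_3}{2}\Re\Bigl(\int_0^L y^3_x\overline{u^3_x}\,dx\Bigr)=o(1),$$
followed by a five-step elimination of the cross terms such as $\Re\int_0^L y^3_x\overline{u^3_x}\,dx$. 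When the wave speeds coincide, the $\lambda^2$-terms vanish identically and $\ell=3$ suffices; when they differ, one must pay for $\lambda^2\Re\int_0^L u^3_x\overline{u^1_x}\,dx=o(1)$ by making $\|u^1_x\|=o(\lambda^{-2})$, and Lemma \ref{L2-est} ($\|u^1_x\|^2=o(\lambda^{-\ell/2-3/2})$) shows this forces $\ell\ge5$. So the wave-speed difference multiplies a $\lambda^2$-weighted cross term that must be annihilated, rather than an invertible coefficient of $u^3_{xx}$ to be read off. Finally, your proposed shortcut in the equal-speed case --- inheriting $u^3$ from $\tau\approx0$ --- cannot work: $\|\tau\|=O(\lambda^{-1/2})$ only gives $L^2$-smallness of the combination $u^3-\frac{h_3}{2}y^3$, whereas one needs $\|u^3_x\|,\|y^3_x\|,\|v^3\|,\|z^3\|\to0$; the equal-speed case still requires the full cross-multiplier machinery of Lemmas \ref{L5} and \ref{L6}, only with the $\lambda^2$-terms absent.
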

\begin{theoreme}\label{POL-ac}
Assume that \eqref{A2} holds. The $C_0-$semigroup $\left(e^{t\mathcal{A}}\right)_{t\geq1}$ is polynomially stable; i.e. there exists constant $C>0$ such that for every $U_0\in D(\mathcal{A})$, we have 
\begin{equation}\label{pol-decay-ac}
E(t)\leq \frac{C}{t^{\frac{2}{\ell}}},\quad t>0,\ \forall U_0\in D(\mathcal{A}),
\end{equation}
where 
$$
\ell=\left\{\begin{array}{lll}
2&\text{if}&\left(\frac{E_1}{\rho_1}=\frac{E_3}{\rho_3}\quad\text{and}\quad \frac{G_3h_3}{\rho_3I_3}\neq \frac{G_1h_1}{\rho_1I_1}\right),\\[0.1in]
6&\text{if}&\left(\frac{E_1}{\rho_1}\neq \frac{E_3}{\rho_3}\ \quad \text{or}\quad \left(\frac{E_1}{\rho_1}=\frac{E_3}{\rho_3}\quad\text{and}\quad \frac{G_3h_3}{\rho_3I_3}= \frac{G_1h_1}{\rho_1I_1}\right)\right).
\end{array}
\right.
$$
\end{theoreme}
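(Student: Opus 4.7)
The approach is to apply the frequency-domain characterization of polynomial stability (Theorem A.3 in the Appendix); since $i\mathbb{R} \subset \rho(\mathcal{A})$ is already provided by Theorem \ref{strong} (Case 2) together with the compactness of the resolvent, the entire work is to establish the uniform resolvent bound
$$
\limsup_{|\lambda|\to\infty} |\lambda|^{-\ell}\,\|(i\lambda I-\mathcal{A})^{-1}\|_{\mathcal{L}(\mathcal{H})} < \infty,
$$
with $\ell = 2$ in the favorable sub-case $\frac{E_1}{\rho_1} = \frac{E_3}{\rho_3}$ and $\frac{G_1 h_1}{\rho_1 I_1} \neq \frac{G_3 h_3}{\rho_3 I_3}$, and $\ell = 6$ otherwise. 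I argue by contradiction: assume there exist $\lambda_n \in \mathbb{R}$ with $|\lambda_n|\to\infty$ and $U_n = (u^1,v^1,y^1,z^1,\omega,\psi,u^3,v^3,y^3,z^3)_n \in D(\mathcal{A})$ with $\|U_n\|_{\mathcal{H}} = 1$ and $\lambda_n^{\ell}(i\lambda_n I-\mathcal{A})U_n = F_n \to 0$ in $\mathcal{H}$, and aim to derive $\|U_n\|_{\mathcal{H}}\to 0$.

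Taking the real part of $\langle \lambda_n^{-\ell} F_n, U_n\rangle_{\mathcal{H}}$ immediately produces $\|v^1\|_{L^2},\|\psi\|_{L^2} = o(|\lambda_n|^{-\ell/2})$, and then the first-order resolvent identities $v^1 = i\lambda_n u^1 + O(|\lambda_n|^{-\ell})$ and $\psi = i\lambda_n \omega + O(|\lambda_n|^{-\ell})$ yield $\|u^1\|_{L^2}, \|\omega\|_{L^2} = o(|\lambda_n|^{-\ell/2-1})$. Next I propagate these rates to the undamped components: I will multiply the second-order equation for $v^1$ by $\bar u^1$ and integrate by parts under the Dirichlet conditions to bound $\|u^1_x\|_{L^2}$; analogously I will test the beam equation against $\bar\omega$ and against the Rellich-type multiplier $x\omega_x$ to recover decay rates for $\|\omega_{xx}\|_{L^2}$ and $\|\omega_x\|_{L^2}$. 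Plugging these into the definition $\tau = -u^1 - \frac{h_1}{2}y^1 + h_2\omega_x + u^3 - \frac{h_3}{2}y^3$ and into the coupling terms $G_i h_i(\omega_x + y^i)_x$ transfers the decay to $\tau$ and starts to quantify the remaining triple $(y^1, y^3, u^3)$.

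The decisive step, where the dichotomy $\ell \in \{2,6\}$ originates, is the inversion of the subsystem governing $(y^1, y^3, u^3)$ once the other components are controlled. Subtracting the two shear-angle equations \eqref{2LB-st} and \eqref{5LB-st} (modulo terms already known to be small) isolates a combination whose leading coefficient is proportional to $\frac{G_1 h_1}{\rho_1 I_1} - \frac{G_3 h_3}{\rho_3 I_3}$, while subtracting the two longitudinal equations \eqref{1LB-st} and \eqref{4LB-st} under the near-vanishing of $\tau$ produces a coefficient proportional to $\frac{E_1}{\rho_1} - \frac{E_3}{\rho_3}$. When the wave speeds coincide but the shear ratios differ, the first inversion is non-degenerate and costs only two powers of $\lambda_n$, yielding $\ell = 2$. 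When either the longitudinal speeds differ, or both pairs coincide and one must differentiate and iterate once more, each degeneracy costs two extra powers, so $\ell = 6$ is what the method yields in those sub-cases.

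The main obstacle is precisely to carry out this multiplier cascade on the undamped triple $(y^1, y^3, u^3)$ with sharp bookkeeping of the powers of $\lambda_n$: the shear couplings $G_i h_i (\omega_x + y^i)_x$ and the shear stress $\tau$ enter the energy-type identities at borderline order and must be absorbed either into the damping-controlled quantities or into the fixed norm $\|U_n\|_{\mathcal{H}} = 1$ via suitable Young's inequalities. Once the propagation is completed in both regimes one obtains $\|U_n\|_{\mathcal{H}} = o(1)$, contradicting the normalization and establishing the claimed resolvent bound, whence Theorem A.3 yields the announced polynomial decay rate $E(t) \leq C t^{-2/\ell}$.
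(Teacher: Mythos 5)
Your overall framework coincides with the paper's: the Borichev--Tomilov theorem (Theorem \ref{bt}), the contradiction setup $\lambda^{\ell}(i\lambda I-\mathcal{A})U=F\to 0$ with $\|U\|_{\mathcal{H}}=1$, the dissipation estimates on $v^1,\psi$ and hence on $u^1,\omega$, and the subsequent bounds on $u^1_x$, $\omega_x$, $\omega_{xx}$ obtained by testing against $\overline{u^1}$ and $\overline{\omega}$ (the Rellich multiplier $x\omega_x$ is not needed, since the term $G_h\omega_{xx}$ in the transverse equation already produces $\|\omega_x\|^2$ with the right sign when testing against $-\overline{\omega}$). Up to that point the proposal is sound.

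The genuine gap is in your ``decisive step.'' First, the mechanism you describe --- subtracting the two shear equations \eqref{2LB-st} and \eqref{5LB-st}, and the two longitudinal equations \eqref{1LB-st} and \eqref{4LB-st} --- does not isolate quantities with coefficients $\frac{G_1h_1}{\rho_1I_1}-\frac{G_3h_3}{\rho_3I_3}$ or $\frac{E_1}{\rho_1}-\frac{E_3}{\rho_3}$ multiplying sign-definite integrals; a direct subtraction leaves $\lambda^2(y^1-y^3)$ and uncoupled second-derivative terms. What the paper actually does is assemble a system of about seven integral identities by cross-multiplying each undamped equation by second derivatives of the \emph{other} undamped components ($\overline{u^3_{xx}},\overline{y^3_{xx}},\overline{y^1_{xx}}$, etc.) and by testing the transverse beam equation against $\overline{u^3_x},\overline{y^1_x},\overline{y^3_x}$, and then runs an explicit linear-elimination cascade (Lemmas \ref{C2-L3-est}--\ref{C2-L6-est}) whose solvability is precisely where the non-degeneracy conditions enter. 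Second, and more seriously, your account of the $\ell=6$ sub-cases never invokes $G_1\neq G_3$, which is part of hypothesis \eqref{A2} and is indispensable there: the cascade closes only through the relation $(G_3^2-G_1^2)h_1^2\int_0^L|y^1_x|^2\,dx=o(1)$, obtained from the beam-equation multipliers, with the boundary terms $\omega_{xxx}(\xi)\overline{u^3_x}(\xi)$ controlled by Gagliardo--Nirenberg interpolation (one of the places where the power $6$ is actually forced). Theorem \ref{INSTABLE1} shows the system is not even strongly stable when $G_1=G_3$, so no scheme of ``differentiating and iterating once more at a cost of two extra powers'' can substitute for this hypothesis; that heuristic also fails to match the statement, since passing from the $\ell=2$ regime to the case $\frac{E_1}{\rho_1}\neq\frac{E_3}{\rho_3}$ is a single degeneracy yet costs four powers. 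As written, the proposal does not contain a proof of the theorem in either sub-case beyond the preliminary estimates.
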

\begin{theoreme}\label{POL-bc}
Assume that \eqref{A3} holds. The $C_0-$semigroup $\left(e^{t\mathcal{A}}\right)_{t\geq1}$ is polynomially stable; i.e. there exists constant $C>0$ such that for every $U_0\in D(\mathcal{A})$, we have 
\begin{equation}\label{pol-decay-ac}
E(t)\leq {\frac{C}{t^{\frac{1}{3}}}},\quad t>0,\ \forall U_0\in D(\mathcal{A}),
\end{equation}
\end{theoreme}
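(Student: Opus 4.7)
The plan is to apply Theorem~A.3 (the frequency-domain characterization of polynomial decay) with exponent $\ell=6$, so that $2/\ell=1/3$ matches the rate in the statement. Since Case~3 of Theorem~\ref{strong} already furnishes $i\mathbb{R}\subset\rho(\mathcal{A})$ under \eqref{A3}, what remains is the resolvent bound
$$
\limsup_{|\lambda|\to\infty}\,|\lambda|^{-6}\,\|(i\lambda I-\mathcal{A})^{-1}\|_{\mathcal{L}(\mathcal{H})}<+\infty.
$$
I argue by contradiction: assume sequences $\lambda_n\in\mathbb{R}$ with $|\lambda_n|\to\infty$ and $U_n=(u^1_n,v^1_n,y^1_n,z^1_n,\omega_n,\psi_n,u^3_n,v^3_n,y^3_n,z^3_n)\in D(\mathcal{A})$ with $\|U_n\|_{\mathcal{H}}=1$ and $|\lambda_n|^{6}\|F_n\|_{\mathcal{H}}\to 0$, where $F_n:=i\lambda_n U_n-\mathcal{A}U_n$. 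The goal is to derive $\|U_n\|_{\mathcal{H}}\to 0$, the sought contradiction.

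The first step extracts dissipation: $\mathrm{Re}\langle F_n,U_n\rangle_{\mathcal{H}}=b\|z^1_n\|^2+c\|\psi_n\|^2$, whence
$$
\|z^1_n\|_{L^2}+\|\psi_n\|_{L^2}=o\bigl(|\lambda_n|^{-3}\bigr).
$$
The first-order resolvent relations $i\lambda_n y^1_n-z^1_n=(F_n)_3$ and $i\lambda_n\omega_n-\psi_n=(F_n)_5$ then give $\|\lambda_n y^1_n\|_{L^2}+\|\lambda_n\omega_n\|_{L^2}=o(|\lambda_n|^{-3})$, and in particular $\|y^1_n\|_{L^2}$, $\|\omega_n\|_{L^2}$ are of order $o(|\lambda_n|^{-4})$.

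The bulk of the work is to quantify the chain of implications used qualitatively in Case~3 of Theorem~\ref{strong}. The resolvent form of \eqref{2LB-st} isolates $\tfrac{h_1}{2}\tau_n$ in terms of $\lambda_n z^1_n$, $y^1_{n,xx}$, $\omega_{n,x}+y^1_n$, $bz^1_n$ and the right-hand side, giving a quantitative estimate on $\|\tau_n\|_{L^2}$. Plugging this into the resolvent form of \eqref{3LB-st}, together with the smallness of $\omega_n$, $\psi_n$, $y^1_n$, and testing against $y^3_n$ (using the clamped condition $\omega(0)=0$ as in Case~3) yields a bound on $\|\lambda_n y^3_n\|_{L^2}$ and $\|y^3_{n,x}\|_{L^2}$. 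Once $y^3_n$ is controlled, the identity $\tau=-u^1-\tfrac{h_1}{2}y^1+h_2\omega_x+u^3-\tfrac{h_3}{2}y^3$ forces $u^1_n-u^3_n$ to be small in a suitable weighted norm. Inserting this into the resolvent forms of \eqref{1LB-st} and \eqref{4LB-st} (where $a=d=0$) and subtracting yields
$$
\left(\tfrac{E_1}{\rho_1}-\tfrac{E_3}{\rho_3}\right)u^3_{n,xx}=(\text{controlled remainder}),
$$
so the hypothesis $E_1/\rho_1\neq E_3/\rho_3$ bounds $\|u^3_{n,xx}\|_{L^2}$ and, by Poincar\'e with $u^3(0)=u^3(L)=0$, also $\|u^3_{n,x}\|_{L^2}\to 0$, and analogously $\|u^1_{n,x}\|_{L^2}\to 0$.

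Collecting all ten components of the energy norm, one concludes $\|U_n\|_{\mathcal{H}}\to 0$, the desired contradiction. The main obstacle is bookkeeping: the cascade runs from $z^1_n,\psi_n$ through $\tau_n$ and $y^3_n$ down to the subtraction argument for the wave components $u^1_n,u^3_n$, and each multiplier step and integration by parts costs powers of $|\lambda_n|$. Achieving exactly $\ell=6$ requires a careful balance of these losses, with particular vigilance toward the $\omega_{xxxx}$-term in \eqref{3LB-st} and the $\tau_x$-coupling, which otherwise would reintroduce $u^3_{n,x}$ or $y^3_{n,x}$ before these quantities have been controlled.
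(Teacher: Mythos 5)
Your setup is the same as the paper's: Borichev--Tomilov with $\ell=6$ (so $2/\ell=1/3$), strong stability from Case~3 of Theorem~\ref{strong} giving \eqref{H1}, a contradiction argument against \eqref{H22}, and the initial dissipation estimates $\|z^1\|,\|\psi\|=o(|\la|^{-3})$, $\|y^1\|,\|\omega\|=o(|\la|^{-4})$, which are exactly Lemma~\ref{3C3-diss}. The gap is in what you call ``the bulk of the work'': quantifying the chain of Case~3 of Theorem~\ref{strong} is not a bookkeeping exercise, and two of your steps fail as described. First, testing the $\omega$-equation \eqref{3LB-st} against $y^3$ does not yield a bound on $\|\la y^3\|$ or $\|y^3_x\|$: after integration by parts the term $G_3h_3\int(\omega_x+y^3)_x\overline{y^3}\,dx$ contributes only $-G_3h_3\Re\int\omega_x\overline{y^3_x}\,dx$ (the $\int y^3\overline{y^3_x}$ part has zero real part by the boundary conditions), so no coercive term in $y^3_x$ appears; and testing the $y^3$-equation against $y^3$ only gives the equipartition identity $\|\la y^3\|^2-\frac{E_3}{\rho_3}\|y^3_x\|^2\approx 0$, which controls neither quantity. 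Second, in the final step the smallness of $\tau$ gives $u^1-u^3$ small only in an $O(|\la|^{-1/2})$ sense, so the ``controlled remainder'' in $\bigl(\frac{E_1}{\rho_1}-\frac{E_3}{\rho_3}\bigr)u^3_{xx}$ contains $\la^2(u^1-u^3)$, which is not small; and even a bound on $\|u^3_{xx}\|$ would give only boundedness, not smallness, of $\|u^3_x\|$ via Poincar\'e.

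The paper's actual mechanism is different in kind: it multiplies pairs of the second-order equations by the second derivatives of each other's unknowns ($\overline{y^1_{xx}}$, $\overline{u^1_{xx}}$, $\overline{y^3_{xx}}$, $\overline{u^3_{xx}}$), subtracts and takes real parts so that the $\la^2$-terms either cancel or appear multiplied by $\bigl(\frac{E_1}{\rho_1}-\frac{E_3}{\rho_3}\bigr)$, producing the four identities of Lemma~\ref{C3-Lemma3} in the cross terms $\Re\int u^i_x\overline{y^j_x}$, $\Re\int u^1_x\overline{u^3_x}$; it then multiplies the $\omega$-equation by $\overline{u^1_x}$ (Lemma~\ref{C3-Lemma4}), which requires pointwise Gagliardo--Nirenberg bounds on $\omega_{xxx}(\xi)$ and $u^1_x(\xi)$ at $\xi\in\{0,L\}$ to kill the boundary terms; and finally it solves this system of identities by taking explicit linear combinations to isolate, in order, $\Re\int u^3_x\overline{y^3_x}$, $\Re\int y^3_x\overline{u^1_x}$, $\|y^3_x\|^2$, $\la^2\Re\int u^1_x\overline{u^3_x}$, and then $\|u^1_x\|^2$, $\|u^3_x\|^2$. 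None of this multiplier web, nor the boundary-term analysis it requires, is present in your outline, so the proposal as written does not establish the resolvent bound.
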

\noindent According to Theorem \ref{bt}, the polynomial energy decay {\eqref{pol-decay-ab}-\eqref{pol-decay-ac} hold} if the following conditions 
\begin{equation}\tag{$\rm{H_1}$}\label{H1}
i\mathbb{R}\subset \rho(\mathcal{A}),
\end{equation}
and 
\begin{equation}\tag{$\rm{H_2}$}\label{H22}
\sup_{\la\in\mathbb{R}}\|(i\la I-\mathcal{A})^{-1}\|_{\mathcal{L}(\mathcal{H})}{\le}O\left(\abs{\la}^{\ell}\right)
\end{equation}
are satisfied. Since condition \eqref{H1} is already proved in Theorem \ref{strong}. We will prove condition \eqref{H22} by an argument of contradiction. For this purpose, suppose that {it} is false, then there exists
$$
\left\{(\la_n,U_n:=(u^1,v^1,y^1,z^1,\omega,\psi,u^3,v^3,y^3,z^3)^{\top})\right\}\subset \R\times D(\mathcal{A}) 
$$
with 
\begin{equation}\label{1pol1}
\abs{\la_n}\to +\infty\quad \text{and}\quad \|U\|_{\mathcal{H}}=\|(u^1,v^1,u^3,v^3,\omega,\psi,y^1,z^1,y^3,z^3)\|_{\mathcal{H}}=1,
\end{equation}
such that 
\begin{equation}\label{2pol2}
\la_n^{\ell}\left(i\la_n I-\mathcal{A}\right)U_n=F_n:=(f_{1,n},f_{2,n},f_{3,n},f_{4,n},f_{5,n},f_{6,n},f_{7,n},f_{8,n},f_{9,n},f_{10,n})^{\top}\to 0\ \ \text{in}\ \ \mathcal{H}. 
\end{equation}
For simplicity, we drop the index $n$. Equivalently, from \eqref{pol2}, we have 
\begin{eqnarray}
i\la u^1-v^1&=&\la^{-\ell}f_1,\label{pol1}\\[0.1in]
i\la v^1-(\rho_1h_1)^{-1}\left[E_1h_1u^1_{xx}+\tau -a v^1\right]&=&\la^{-\ell}f_2,\label{pol2}\\[0.1in]
i\la y^1-z^1&=&\la^{-\ell}f_3,\label{pol7}\\[0.1in]
i\la z^1-(\rho_1I_1)^{-1}\left[E_1I_1y^1_{xx}+\frac{h_1}{2}\tau-G_1h_1(\omega_x+y^1)-bz^1\right]&=&\la^{-\ell}f_4,\label{pol8}\\[0.1in]
i\la \omega-\psi&=&\la^{-\ell}f_5,\label{pol5}\\[0.1in]
i\la \psi-(\rho h)^{-1}\left[-EI\omega_{xxxx}+G_1h_1(\omega_x+y^1)_x+G_3h_3(\omega_x+y^3)_x+h_2\tau_x-c\psi\right]&=&\la^{-\ell}f_6,\label{pol6}\\[0.1in]
i\la u^3-v^3&=&\la^{-\ell}f_7,\label{pol3}\\[0.1in]
i\la v^3-(\rho_3 h_3)^{-1}\left[E_3h_3u^3_{xx}-\tau-dv^3\right]&=&\la^{-\ell}f_8,\label{pol4}\\[0.1in]
i\la y^3-z^3&=&\la^{-\ell}f_9,\label{pol9}\\[0.1in]
i\la z^3-(\rho_3I_3)^{-1}\left[E_3I_3y_{xx}^3+\frac{h_3}{2}\tau-G_3h_3(\omega_x+y^3)-ez^3\right]&=&\la^{-\ell}f_{10},\label{pol10}
\end{eqnarray}
where 
\begin{equation}\label{tau}
\tau=-u^1+u^3+h_2\omega_x-\frac{h_1}{2}y^1-\frac{h_3}{2}y^3.
\end{equation}
\subsection{Proof of Theorem \ref{POL-ab}.}
In this subsection, {assuming that \eqref{A1} holds, we shall find a contradiction with \eqref{1pol1}}. For clarity, we divide the proof into several Lemmas. 
\begin{Lemma}\label{dissipation}
The solution $U\in D(\mathcal{A})$ of system \eqref{pol1}-\eqref{pol10} satisfies the following asymptotic {estimations} 
\begin{equation}\label{L1-est1}
\int_0^L\abs{v^1}^2dx=\frac{o(1)}{\la^{\ell}},\ \ \int_0^L\abs{z^1}^2dx=\frac{o(1)}{\la^{\ell}},\ \ \int_0^L\abs{u^1}^2dx=\frac{o(1)}{\la^{\ell+2}},\ \int_0^L\abs{y^1}^2dx=\frac{o(1)}{\la^{\ell+2}}\ \text{and}\ \|\tau\|=\frac{O(1)}{\la^{\frac{1}{2}}}.
\end{equation}
\end{Lemma}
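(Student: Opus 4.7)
The first four bounds in \eqref{L1-est1} follow automatically from the dissipation identity and the kinematic equations; the $\tau$-estimate is the only delicate step. I would proceed as follows.

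\textbf{Step 1 (dissipation and position estimates).} Pair $\lambda^{\ell}(i\lambda I - \mathcal{A})U = F$ with $U$ in the energy inner product and take the real part. Since we are in Case 1 ($c = d = e = 0$), the same m-dissipativity computation used at the well-posedness stage collapses $\Re\langle \mathcal{A}U, U\rangle_{\mathcal{H}}$ to $-a\|v^1\|^2 - b\|z^1\|^2$, yielding
\[
\lambda^{\ell}\bigl(a\|v^1\|^2 + b\|z^1\|^2\bigr) = \Re\langle F, U\rangle_{\mathcal{H}} = o(1),
\]
from $\|F\|_{\mathcal{H}} \to 0$ and $\|U\|_{\mathcal{H}} = 1$. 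Dividing by $\lambda^{\ell}$ and using $a,b>0$ produces the first two bounds. Solving \eqref{pol1} and \eqref{pol7} for $u^1 = (v^1 + \lambda^{-\ell}f_1)/(i\lambda)$ and $y^1 = (z^1 + \lambda^{-\ell}f_3)/(i\lambda)$, then taking $L^2$ norms and squaring, feeds the previous decay (together with $\|f_1\|,\|f_3\| = o(1)$) into $\|u^1\|^2, \|y^1\|^2 \le C\lambda^{-2}(\|v^1\|^2 + \lambda^{-2\ell}\|f_{1,3}\|^2) = o(1)/\lambda^{\ell+2}$.

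\textbf{Step 2 ($\tau$-estimate).} The energy norm alone only furnishes $\|\tau\| \le 1$, which is weaker than the target $O(\lambda^{-1/2})$. I would exploit the damped equation \eqref{pol2}, rewritten as
\[
\tau = \rho_1 h_1 (i\lambda v^1) + a v^1 - E_1 h_1 u^1_{xx} - \rho_1 h_1\lambda^{-\ell}f_2,
\]
and observe that the right-hand side modulo the $u^1_{xx}$ term has $L^2$-norm of order $|\lambda|\|v^1\| = o(1)\lambda^{1-\ell/2}$, which is already $o(\lambda^{-1/2})$ when $\ell \ge 3$. To separate $\tau$ from $u^1_{xx}$, I would pair this equation with $\overline{\tau}$ in $L^2$ and integrate by parts in $\int u^1_{xx}\overline{\tau}\,dx$. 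The boundary terms vanish because $\tau(0) = \tau(L) = 0$: the Dirichlet conditions on $u^1, u^3, y^1, y^3$ and the clamped condition $\omega_x(0)=\omega_x(L)=0$ kill every summand of $\tau$ at the endpoints. After this integration by parts one obtains
\[
\|\tau\|^2 + E_1 h_1 \int_0^L u^1_x\,\overline{\tau_x}\,dx = \text{(small terms of order } o(\lambda^{-1/2})\|\tau\|\text{)},
\]
and since $\|\tau_x\|$ is bounded by $\|U\|_{\mathcal{H}} = 1$, the $\tau$ bound hinges on decay of $\|u^1_x\|$. This decay can be obtained by a second pairing of \eqref{pol2} with $\overline{u^1}$, using the Dirichlet boundary condition to recover $-E_1 h_1\|u^1_x\|^2$ and then replacing $i\lambda\langle v^1, u^1\rangle$ by $-\|v^1\|^2$ via \eqref{pol1}; combining both pairings in a bootstrap closes the bound at $\|\tau\| = O(\lambda^{-1/2})$.

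\textbf{Main obstacle.} The $\tau$-estimate is the delicate step: the damped equation isolates only the combination $E_1 h_1 u^1_{xx} + \tau$, so separating $\|\tau\|$ requires either a clean $L^2$-pairing against $\overline{\tau}$ (using that $\tau$ vanishes on the boundary) or a bootstrap through $\|u^1_x\|$. The tight case is $\ell = 3$, where $o(1)\lambda^{1-\ell/2} = o(\lambda^{-1/2})$ matches the stated exponent exactly; any loss in the bootstrap step would force a worse rate, so the pairing must be organized carefully to keep the $1/2$ exponent.
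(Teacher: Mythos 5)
Your treatment of the first four estimates in \eqref{L1-est1} coincides with the paper's: taking the real part of $\left<F,U\right>_{\mathcal{H}}=\left<(i\lambda I-\mathcal{A})U,U\right>_{\mathcal{H}}$ gives $a\|v^1\|^2+b\|z^1\|^2=o(\lambda^{-\ell})$, and \eqref{pol1}, \eqref{pol7} convert this into the $u^1$ and $y^1$ bounds exactly as in the paper. For the $\tau$-estimate, however, you take a genuinely different and much heavier route, and you have misdiagnosed it as the delicate step. The paper's argument uses no equation of motion at all: by \eqref{tau}, $\tau$ is a pointwise linear combination of $u^1,u^3,y^1,y^3$ and $\omega_x$; the kinematic relations \eqref{pol1}, \eqref{pol3}, \eqref{pol5}, \eqref{pol7}, \eqref{pol9} together with the $L^2$-boundedness of $v^1,v^3,\psi,z^1,z^3$ give $\|u^1\|,\|u^3\|,\|y^1\|,\|y^3\|,\|\omega\|=O(\lambda^{-1})$, and the Gagliardo--Nirenberg inequality $\|\omega_x\|\le\|\omega_{xx}\|^{1/2}\|\omega\|^{1/2}+\|\omega\|$ yields $\|\omega_x\|=O(\lambda^{-1/2})$; hence $\|\tau\|=O(\lambda^{-1/2})$, the term $h_2\omega_x$ being the dominant contribution. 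Your claim that ``the energy norm alone only furnishes $\|\tau\|\le 1$'' overlooks that the energy norm also controls all the velocity components, which is precisely what makes this two-line argument work.

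Your alternative --- solving \eqref{pol2} for $\tau$, pairing with $\overline{\tau}$, integrating by parts using $\tau(0)=\tau(L)=0$, and bootstrapping through $\|u^1_x\|$ via a second pairing with $\overline{u^1}$ --- does appear to close: the first pass on $\|u^1_x\|^2$ needs only the crude bound $\|\tau\|\le C$ from the energy norm, so there is no circularity, and for $\ell\ge 3$ one even lands on $\|\tau\|=o(\lambda^{-1/2})$, slightly stronger than claimed. But it costs you the restriction $\ell\ge 3$ (which the paper's proof of this particular bound does not need), several additional estimates, and it obscures the true source of the exponent $\tfrac12$, which is the interpolation bound on $\omega_x$ rather than anything about the damped equation. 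So the proposal is correct, but the ``main obstacle'' you identify is not an obstacle in the paper's approach.
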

\begin{proof}
Taking the inner product of $F$ with $U$ in $\mathcal{H}$, then using \eqref{1pol1} and the fact that $U$ is uniformly bounded in $\mathcal{H}$, we get 
\begin{equation}\label{L1-est2}
a\int_0^L\abs{v^1}^2dx+c\int_0^L\abs{z^1}^2dx=-\Re\left(\left<\mathcal{A}U,U\right>_{\mathcal{H}}\right)=\Re\left(\left<(i\la I-\mathcal{A})U,U\right>_{\mathcal{H}}\right)=o\left(\la^{-\ell}\right).
\end{equation}
Then, we obtain the first two estimations in \eqref{L1-est1}. Now, using \eqref{L1-est2}, \eqref{pol1}, \eqref{pol7} and the facts that $f_1, f_3\to 0$ in $H_0^1(0,L)$, we get 
\begin{equation*}
\int_0^L\abs{u^1}^2dx\leq \frac{2}{\la^2}\int_0^L\abs{v^1}^2dx+\frac{2}{\la^{2+2\ell}}\int_0^L\abs{f_1}^2dx=\frac{o(1)}{\la^{\ell+2}}\ \text{and}\ \int_0^L\abs{y^1}^2dx\leq \frac{2}{\la^2}\int_0^L\abs{z^1}^2dx+\frac{2}{\la^{2+2\ell}}\int_0^L\abs{f_3}^2dx=\frac{o(1)}{\la^{\ell+2}}.
\end{equation*}
{ This gives} the third and the fourth estimations of \eqref{L1-est1}. { By} the Gagliardo-Nirenberg interpolation inequality and the facts that $\la \omega$ and $\omega_{xx}$ are uniformly bounded in $L^2(0,L)$, we get 
\begin{equation}\label{L1-est3}
\|\omega_{x}\|_{L^2(0,L)}\leq \|\omega_{xx}\|_{L^2(0,L)}^{\frac{1}{2}}\|\omega\|^{\frac{1}{2}}_{L^2(0,L)}+\|\omega\|_{L^2(0,L)}\leq O(\la^{-\frac{1}{2}}).
\end{equation}
Using the facts that $\la u^1,\la u^3,\la y^1,\la y^3$ are uniformly bounded in $L^2(0,L)$ and \eqref{L1-est3} in \eqref{tau}, we get the last equation in \eqref{L1-est1}. The proof has been completed.  
\end{proof} 
\begin{Lemma}\label{L2-est}
For all $\ell\geq 3$, the solution $U\in D(\mathcal{A})$ of system \eqref{pol1}-\eqref{pol10} satisfies the following asymptotic behavior estimations
\begin{equation}\label{L2-est2}
\int_0^L\abs{u_x^1}^2dx=\frac{o(1)}{\la^{\frac{\ell}{2}+\frac{3}{2}}}\quad \text{and}\quad \int_0^L\abs{y_x^1}^2dx=\frac{o(1)}{\la^{\frac{\ell}{2}+\frac{3}{2}}}.
\end{equation}
\end{Lemma}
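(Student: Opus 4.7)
The plan is to test the momentum equations (\ref{pol2}) and (\ref{pol8}) against $\overline{u^1}$ and $\overline{y^1}$ respectively, and read the $H^1$--estimates off the resulting integral identities after one integration by parts. The bounds already proven in Lemma \ref{dissipation} are then plugged in term by term.

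\textbf{Step 1 ($u_x^1$ estimate).} I multiply (\ref{pol2}) by $\rho_1 h_1\,\overline{u^1}$, integrate on $(0,L)$, and integrate by parts once in the $u^1_{xx}$--term using the Dirichlet conditions $u^1(0)=u^1(L)=0$. To handle the cross term $i\la\rho_1 h_1\int_0^L v^1\overline{u^1}\,dx$, I use (\ref{pol1}) in the conjugate form $i\la\,\overline{u^1}=-\overline{v^1}-\la^{-\ell}\overline{f_1}$, which produces
$$i\la\int_0^L v^1\overline{u^1}\,dx = -\|v^1\|^2 - \la^{-\ell}\int_0^L v^1\overline{f_1}\,dx.$$
Rearranging the whole identity, the result is
$$E_1 h_1\,\|u_x^1\|^2 = \rho_1 h_1\,\|v^1\|^2 + \int_0^L \tau\,\overline{u^1}\,dx + \mathrm{R},$$
where $\mathrm{R}$ collects remainder pieces involving $f_1, f_2$ and the damping $a$ that decay strictly faster than the two displayed terms.

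\textbf{Step 2 (term-by-term estimation).} By Cauchy--Schwarz and Lemma \ref{dissipation},
$$\rho_1 h_1\,\|v^1\|^2 = \frac{o(1)}{\la^{\ell}}, \qquad \left|\int_0^L \tau\,\overline{u^1}\,dx\right|\le \|\tau\|\,\|u^1\| = O(\la^{-1/2})\cdot \frac{o(1)}{\la^{(\ell+2)/2}} = \frac{o(1)}{\la^{(\ell+3)/2}}.$$
The hypothesis $\ell\ge 3$ gives $\ell\ge(\ell+3)/2$, so the $\|v^1\|^2$--bound is absorbed into $o(1)/\la^{(\ell+3)/2}$ as well; the remainder $\mathrm{R}$ is smaller still. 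This yields the first estimate of (\ref{L2-est2}).

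\textbf{Step 3 ($y_x^1$ estimate).} The same procedure applied to (\ref{pol8}) tested against $\rho_1 I_1\,\overline{y^1}$, together with (\ref{pol7}), produces
$$E_1 I_1\,\|y_x^1\|^2 = \rho_1 I_1\,\|z^1\|^2 + \frac{h_1}{2}\int_0^L \tau\,\overline{y^1}\,dx - G_1 h_1 \int_0^L (\omega_x+y^1)\,\overline{y^1}\,dx + \mathrm{R}'.$$
The genuinely new contribution is $G_1 h_1\int_0^L\omega_x\,\overline{y^1}\,dx$, which is controlled by the estimate $\|\omega_x\|=O(\la^{-1/2})$ already derived in Lemma \ref{dissipation} and therefore bounded by $o(1)/\la^{(\ell+3)/2}$; the term $G_1 h_1\|y^1\|^2=o(1)/\la^{\ell+2}$ is negligible, and the remaining pieces are handled verbatim as in Step 2.

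\textbf{Main obstacle.} No single calculation is hard; the only delicate point is the bookkeeping that identifies $\ell\ge 3$ as the correct threshold. It is exactly the condition that forces the dissipation-induced $o(1)/\la^{\ell}$ bounds on $\|v^1\|^2$ and $\|z^1\|^2$ to fit inside the slower $o(1)/\la^{(\ell+3)/2}$ rate produced by the shear-stress term, whose sluggish decay reflects the fact that only $\|\tau\|=O(\la^{-1/2})$ is available at this stage.
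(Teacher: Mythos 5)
Your proposal is correct and follows essentially the same route as the paper: testing \eqref{pol2} and \eqref{pol8} against $\overline{u^1}$ and $\overline{y^1}$, integrating by parts, and absorbing every term into $o(1)/\la^{(\ell+3)/2}$ via the bounds of Lemma \ref{dissipation} together with $\|\tau\|=O(\la^{-1/2})$ and $\|\omega_x\|=O(\la^{-1/2})$ (the latter is equation \eqref{L1-est3} inside the proof of Lemma \ref{dissipation}, not its statement). The only cosmetic difference is that you rewrite $i\la\overline{u^1}=-\overline{v^1}-\la^{-\ell}\overline{f_1}$ to turn the cross term into $-\|v^1\|^2$, whereas the paper bounds $\bigl|i\la\int_0^L v^1\overline{u^1}\,dx\bigr|$ directly by Cauchy--Schwarz; both yield $o(1)/\la^{\ell}$, and your identification of $\ell\ge 3$ as the threshold making $\ell\ge(\ell+3)/2$ matches the paper's bookkeeping.
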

\begin{proof}
Multiplying \eqref{pol2} and \eqref{pol8} respectively  by $u_1$ and $y_1$, using integration by parts on $(0,L)$ and the fact that $f_2,f_4\to 0$ in $L^2(0,L)$ and $\la u^1, \la y^1$ are uniformly bounded in $L^2(0,L)$, we get 
\begin{equation}\label{L2-est3}
i\la \int_0^Lv^1\overline{u^1}dx+\frac{E_1}{\rho_1}\int_0^L\abs{u^1_x}^2dx-\frac{1}{h_1\rho_1}\int_0^L\tau \overline{u^1}dx+\frac{a}{\rho_1h_1}\int_0^Lv^1\overline{u^1}dx=\frac{o(1)}{\la^{\ell+1}}.
\end{equation}
and 
\begin{equation}\label{L2-est4}
i\la\int_0^Lz^1\overline{y^1}dx+\frac{E_1}{\rho_1}\int_0^L\abs{y^1_x}^2dx-\frac{h_1}{2\rho_1I_1}\int_0^L\tau \overline{y^1}dx+\frac{G_1 h_1}{\rho_1 I_1}\int_0^L (\omega_x+y^1)\overline{y^1}dx+\frac{b}{\rho_1I_1}\int_0^Lz^1\overline{y^1}dx=\frac{o(1)}{\la^{\ell+1}}.
\end{equation}
Using Lemma \ref{dissipation} and \eqref{L1-est3} in \eqref{L2-est3} and \eqref{L2-est4}, we get 
\begin{equation}\label{L2-est5}
\left\{\begin{array}{lll}
\displaystyle
\left|i\la \int_0^Lv^1\overline{u^1}dx\right|=\frac{o(1)}{\la^{\ell}},&
\displaystyle
\left|\int_0^L\tau \overline{u^1}dx\right|=\frac{o(1)}{\la^{\frac{\ell}{2}+\frac{3}{2}}},&\displaystyle
 \left|\int_0^Lv^1\overline{u^1}dx\right|=\frac{o(1)}{\la^{\ell+1}}\\[0.2in]
\displaystyle
\left|i\la \int_0^Lz^1\overline{y^1}dx\right|=\frac{o(1)}{\la^{\ell}}&\displaystyle
\left|\int_0^L\tau \overline{y^1}dx\right|=\frac{o(1)}{\la^{\frac{\ell}{2}+\frac{3}{2}}},
&
\displaystyle
\left|\int_0^L (\omega_x+y^1)\overline{y^1}dx\right|=\frac{o(1)}{\la^{\frac{\ell}{2}+\frac{3}{2}}},\\[0.2in]
\displaystyle
\left|\int_0^Lz^1\overline{y^1}dx\right|=\frac{o(1)}{\la^{\ell+1}}.&&
\end{array}
\right.
\end{equation}
Inserting \eqref{L2-est5} in \eqref{L2-est3} and \eqref{L2-est4}, we get the desired result. The proof has been completed.
\end{proof}
\begin{Lemma}\label{L3-est}
For $\ell\geq 3$, the solution $U\in D(\mathcal{A})$ of system \eqref{pol1}-\eqref{pol10} satisfies the following asymptotic behavior estimations 
\begin{equation}\label{1L3-est1}
\|\omega_{xxx}\|_{L^2(0,L)}\leq O(\la^{\frac{1}{2}}),\ \ \|\omega\|_{H^4(0,L)}\leq O(\la),\ \abs{\omega_{xxx}(\zeta)}\leq O(\la^{\frac{3}{4}}),\ \abs{u_x(\zeta)}\leq \frac{o(1)}{\la^{\frac{\ell}{8}+\frac{5}{8}}},\ \zeta\in \{0,L\}.
\end{equation}
\end{Lemma}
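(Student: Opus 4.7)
The strategy is to extract the fourth derivative $\omega_{xxxx}$ from the beam equation, propagate the resulting bound down to $\omega_{xxx}$ and its boundary traces via a Gagliardo--Nirenberg interpolation together with a one-dimensional trace inequality, and then repeat the same mechanism on the wave equation \eqref{pol2} to produce the boundary estimate for $u^1_x$.

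The first step is to substitute $\psi=i\la\omega-\la^{-\ell}f_5$ from \eqref{pol5} into \eqref{pol6} (recall $c=0$ under \eqref{A1}) to obtain a pointwise identity of the schematic form
$$EI\,\omega_{xxxx}=\la^2\rho h\,\omega+G_1h_1(\omega_x+y^1)_x+G_3h_3(\omega_x+y^3)_x+h_2\tau_x+\text{error},$$
where the error collects $f_5$ and $f_6$ terms that decay in $\la$. The right-hand side is then bounded in $L^2$ using: (i) the uniform energy estimate $\|U\|_\HH=1$, which gives $\|\omega_{xx}\|,\|u^3_x\|,\|y^3_x\|=O(1)$ and $\|\la\omega\|=O(1)$ (the latter via \eqref{pol5} and $\|\psi\|=O(1)$); and (ii) Lemmas \ref{dissipation} and \ref{L2-est}, which give $\|\tau\|=O(\la^{-1/2})$, $\|u^1_x\|,\|y^1_x\|=o(1)$. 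The dominant contribution is $\la^2\rho h\,\omega$, of size $O(\la)$, yielding $\|\omega_{xxxx}\|_{L^2}=O(\la)$ and hence $\|\omega\|_{H^4}=O(\la)$. Next, the Gagliardo--Nirenberg inequality
$$\|\omega_{xxx}\|_{L^2}\leq C\|\omega_{xxxx}\|_{L^2}^{1/2}\|\omega_{xx}\|_{L^2}^{1/2}+C\|\omega_{xx}\|_{L^2}$$
produces $\|\omega_{xxx}\|_{L^2}=O(\la^{1/2})$, after which the standard one-dimensional trace inequality $|f(\zeta)|^2\leq C\|f\|_{L^2}\|f'\|_{L^2}+C\|f\|_{L^2}^2$ applied to $f=\omega_{xxx}$ gives $|\omega_{xxx}(\zeta)|^2=O(\la^{1/2}\cdot\la)=O(\la^{3/2})$, i.e.\ $|\omega_{xxx}(\zeta)|=O(\la^{3/4})$.

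For the boundary estimate of $u^1_x$, I would rearrange \eqref{pol2} as
$$E_1h_1\,u^1_{xx}=i\la\rho_1h_1\,v^1-\tau+av^1-\rho_1h_1\la^{-\ell}f_2.$$
From Lemma \ref{dissipation}, $\|\la v^1\|=o(\la^{1-\ell/2})=o(\la^{-1/2})$ when $\ell\geq 3$, and $\|\tau\|=O(\la^{-1/2})$, so $\|u^1_{xx}\|_{L^2}=O(\la^{-1/2})$. Combining with the bound $\|u^1_x\|_{L^2}=o(\la^{-\ell/4-3/4})$ from Lemma \ref{L2-est}, the trace inequality yields
$$|u^1_x(\zeta)|^2\leq C\|u^1_x\|_{L^2}\|u^1_{xx}\|_{L^2}+C\|u^1_x\|_{L^2}^2=o(\la^{-\ell/4-5/4}),$$
so that $|u^1_x(\zeta)|=o(\la^{-\ell/8-5/8})$, as claimed.

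The main technical caveat lies in the first step: upon expanding $\tau_x$ via \eqref{tau}, derivatives of the undamped variables $u^3_x$ and $y^3_x$ appear, and only the uniform energy bound $O(1)$ is available for those. Because they must compete against the leading $O(\la)$ term $\la^2\rho h\,\omega$, they are absorbed without loss and the argument goes through; however, this is precisely what prevents any improvement beyond $\|\omega_{xxxx}\|_{L^2}=O(\la)$ and thus pins the boundary trace rate at $O(\la^{3/4})$.
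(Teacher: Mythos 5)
Your proposal is correct and follows essentially the same route as the paper: bound $\|\omega_{xxxx}\|_{L^2}=O(\la)$ from the beam equation (the paper reads the dominant term as $i\la\psi$ rather than substituting $\psi=i\la\omega+\cdots$, which is the same estimate), then apply Gagliardo--Nirenberg and the one-dimensional trace inequality to get $\|\omega_{xxx}\|=O(\la^{1/2})$ and $|\omega_{xxx}(\zeta)|=O(\la^{3/4})$, and finally combine $\|u^1_{xx}\|=O(\la^{-1/2})$ from \eqref{pol2} with $\|u^1_x\|=o(\la^{-\ell/4-3/4})$ from Lemma \ref{L2-est} to obtain the boundary bound on $u^1_x$. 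The only cosmetic difference is your use of the squared form of the trace inequality, which is equivalent to the paper's interpolation form.
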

\begin{proof}
From \eqref{pol6}, using the fact that $\psi, (\omega_x+y^1)_x$, $(\omega_x+y^3)_x$ and $\tau_x$ are uniformly bounded in $L^2(0,L)$ and $f_6\to 0$ in $H_0^2(0,L)$, we obtain
\begin{equation}\label{L3-est1}
\|\omega_{xxxx}\|_{L^2(0,L)}=O\left(\la\right).
\end{equation}
Using Gagliardo-Nirenberg inequality, \eqref{L3-est1} and the fact that $\omega_{xx}$ is uniformly bounded in $L^2(0,L)$, we get 
\begin{equation}\label{L3-est2}
\|\omega_{xxx}\|_{L^2(0,L)}\leq \|\omega_{xxxx}\|^{\frac{1}{2}}\|\omega_{xx}\|^{\frac{1}{2}}+\|\omega_{xx}\|\leq O(\la^{\frac{1}{2}}).
\end{equation}
Using \eqref{L3-est1} and \eqref{L3-est2}, we get the second estimation in \eqref{1L3-est1}.
Since $\omega\in H^4(0,L)$, then $\omega_{xxx}\in H^1(0,L)\subset C\left([0,L]\right)$. Thus, by using Gagliardo-Nirenberg inequality,  we obtain 
\begin{equation}\label{L3-est3}
|\omega_{xxx}(\zeta)|\leq \|\omega_{xxxx}\|_{L^2(0,L)}^{\frac{1}{2}}\|\omega_{xxx}\|^{\frac{1}{2}}_{L^2(0,L)}+\|\omega_{xxx}\|_{L^2(0,L)}\leq O(\la^{\frac{3}{4}}). 
\end{equation}
Since, $u\in H^2(0,L)$, then $u_x\in H^1(0,L)\subset C\left([0,L]\right)$. Thus, by using Gagliardo-Nirenberg inequality, we get 
\begin{equation}\label{L3-est4}
\abs{u^1_x(\zeta)}\leq \|u^1_{xx}\|^{\frac{1}{2}}_{L^2(0,L)}\|u^1_x\|^{\frac{1}{2}}_{L^2(0,L)}+\|u^1_x\|_{L^2(0,L)}.
\end{equation}
Using \eqref{pol2} and \eqref{L1-est1} and the fact that $\ell\geq 3$, we get 
\begin{equation}\label{L3-est5}
\|u^1_{xx}\|\leq O(\la^{-\frac{1}{2}})\quad \text{and}\quad \|u^1_x\|=\frac{o(1)}{\la^{\frac{\ell}{4}+\frac{3}{4}}}.
\end{equation}
Inserting \eqref{L3-est5} in \eqref{L3-est4}, we get the desired results. The proof has been completed.
\end{proof}
\begin{Lemma}\label{L4-est}
For $\ell\geq 3$, the solution $U\in D(\mathcal{A})$ of system \eqref{pol1}-\eqref{pol10} satisfies the following asymptotic behavior estimations 
\begin{equation}\label{L4-est1}
\int_0^L \abs{\omega_{xx}}^2dx=o(1)\quad \text{and}\quad \int_0^L\abs{\psi}^2dx=o(1).
\end{equation}
\end{Lemma}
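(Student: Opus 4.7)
The plan is to combine two complementary multiplier identities on the spectral $\omega$-equation \eqref{pol6} (with $c=0$ under \eqref{A1}) and then invoke the boundary information of Lemma \ref{L3-est}. First, multiply \eqref{pol6} by $\rho h\overline{\omega}$ and integrate over $(0,L)$. The clamped boundary conditions $\omega(0)=\omega(L)=\omega_x(0)=\omega_x(L)=0$ annihilate every boundary term in the successive integrations by parts, turning $\int \omega_{xxxx}\overline{\omega}$ into $\|\omega_{xx}\|^2$, $\int (\omega_x+y^j)_x \overline{\omega}$ into $-\|\omega_x\|^2-\int y^j\overline{\omega_x}$, and $\int \tau_x\overline{\omega}$ into $-\int\tau\overline{\omega_x}$. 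Using \eqref{pol5} to rewrite $i\lambda\overline{\omega}=-\overline{\psi}-\lambda^{-\ell}\overline{f_5}$, the $i\lambda\psi$ contribution becomes $-\rho h\|\psi\|^2+o(\lambda^{-\ell})$. The bounds $\|\omega_x\|,\|\tau\|=O(\lambda^{-1/2})$ (from Lemmas \ref{dissipation} and \ref{L3-est}), $\|y^1\|=o(\lambda^{-(\ell+2)/2})$, and $\|y^3\|=O(\lambda^{-1})$ (the latter extracted from \eqref{pol9} and the energy bound on $z^3$) make every cross term $o(1)$. One obtains the first key identity
\[
EI\|\omega_{xx}\|^2-\rho h\|\psi\|^2=o(1).\qquad (\ast)
\]

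Next, apply the multiplier $\rho h (x-L/2)\overline{\omega_x}$ to \eqref{pol6} and take twice the real part. A careful double integration by parts of the biharmonic term, using the symmetric weight $q(x)=x-L/2$ and $\omega_x(0)=\omega_x(L)=0$, produces
\[
EI\bigl[3\|\omega_{xx}\|^2-\tfrac{L}{2}\bigl(|\omega_{xx}(L)|^2+|\omega_{xx}(0)|^2\bigr)\bigr].
\]
The time-derivative term, via $\psi=i\lambda\omega+O(\lambda^{-\ell})$ and $2\Re(\omega\overline{\omega_x})=\partial_x|\omega|^2$ combined with $\omega(0)=\omega(L)=0$, contributes $\rho h\lambda^2\|\omega\|^2=\rho h\|\psi\|^2+o(1)$. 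The shear and $\tau$ cross terms remain $o(1)$ under the same bounds as above. This gives the second identity
\[
\rho h\|\psi\|^2+3EI\|\omega_{xx}\|^2=\tfrac{EIL}{2}\bigl(|\omega_{xx}(L)|^2+|\omega_{xx}(0)|^2\bigr)+o(1).\qquad(\ast\ast)
\]
Substituting $(\ast)$ into $(\ast\ast)$ eliminates $\|\psi\|^2$ and yields $4EI\|\omega_{xx}\|^2=\tfrac{EIL}{2}\bigl(|\omega_{xx}(L)|^2+|\omega_{xx}(0)|^2\bigr)+o(1)$.

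The main obstacle is to show that the two boundary traces $|\omega_{xx}(\zeta)|^2$, $\zeta\in\{0,L\}$, are $o(1)$. A direct Gagliardo--Nirenberg bound based on Lemma \ref{L3-est} ($\|\omega_{xxx}\|=O(\lambda^{1/2})$) yields only $|\omega_{xx}(\zeta)|^2=O(\lambda^{1/2})$, which is too weak. To close this gap, evaluate \eqref{pol6} pointwise at $x=\zeta$: the regularity $\omega,f_5,f_6\in H_0^2$ forces $\psi(\zeta)=f_5(\zeta)=f_6(\zeta)=0$, giving a linear algebraic relation expressing $EI\omega_{xxxx}(\zeta)$ as a combination of $\omega_{xx}(\zeta)$ and the traces $u^j_x(\zeta),y^j_x(\zeta)$. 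The damped traces $|u^1_x(\zeta)|$ and $|y^1_x(\zeta)|$ are $o(\lambda^{-1})$ by Lemma \ref{L3-est} (and its $y^1$ analog), while the undamped traces $u^3_x(\zeta)$ and $y^3_x(\zeta)$ have to be controlled via an additional bootstrap using the spectral equations \eqref{pol4} and \eqref{pol10} together with a trace interpolation. Once $|\omega_{xx}(\zeta)|^2=o(1)$ is secured, the displayed identity forces $\|\omega_{xx}\|^2=o(1)$, and then $(\ast)$ immediately gives $\|\psi\|^2=o(1)$, completing the lemma.
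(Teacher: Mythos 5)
Your identity $(\ast)$ (testing \eqref{pol6} against $\overline{\omega}$) is exactly the paper's Step~2, and it is fine; but it only relates $\|\omega_{xx}\|^2$ to $\|\psi\|^2$, so the whole lemma hinges on your second half, and that is where the argument breaks down. The multiplier $(x-L/2)\overline{\omega_x}$ inevitably produces the boundary traces $|\omega_{xx}(0)|^2,|\omega_{xx}(L)|^2$, and you have no way to show these are $o(1)$. The route you sketch --- evaluating \eqref{pol6} pointwise at $x=\zeta$ --- is not viable: $U\in D(\mathcal{A})$ only gives $\omega\in H^4(0,L)$, so $\omega_{xxxx}$ is merely an $L^2$ function and has no pointwise trace; and even formally the resulting relation expresses $\omega_{xxxx}(\zeta)$ in terms of $\omega_{xx}(\zeta)$ and the other traces, so it cannot be solved for $\omega_{xx}(\zeta)$ without an independent bound on $\omega_{xxxx}(\zeta)$, which would require $H^5$ regularity that is not available. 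Moreover, at this stage of the bootstrap (before Lemmas \ref{L5} and \ref{L6}) the undamped components $u^3,y^3$ carry no smallness whatsoever: Gagliardo--Nirenberg with $\|u^3_{xx}\|=O(\la)$ only gives $|u^3_x(\zeta)|,|y^3_x(\zeta)|=O(\la^{1/2})$, so the "additional bootstrap" you defer to is precisely the missing content. As written, the proposal proves $EI\|\omega_{xx}\|^2-\rho h\|\psi\|^2=o(1)$ and nothing more.

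The paper avoids the boundary-trace obstruction entirely by extracting $\|\omega_{xx}\|^2$ from the \emph{damped} wave equation rather than from the beam equation: it multiplies \eqref{pol2} by $\overline{\omega}_{xxx}$. The coupling term $-\frac{1}{\rho_1h_1}\int_0^L\tau\,\overline{\omega}_{xxx}\,dx$ contains $h_2\omega_x$, and $\int_0^L\omega_x\overline{\omega}_{xxx}\,dx=-\|\omega_{xx}\|^2$ with no boundary contribution thanks to $\omega_x(0)=\omega_x(L)=0$; the remaining pieces of $\tau$ are $O(\la^{-1})$ in $L^2$ and pair against $\|\omega_{xxx}\|=O(\la^{1/2})$ to give $O(\la^{-1/2})$. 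The only boundary terms that appear are $u^1_x(\zeta)\overline{\omega}_{xxx}(\zeta)$, which are $o(\la^{-\ell/8-5/8})\cdot O(\la^{3/4})=o(1)$ for $\ell\geq 3$ by Lemma \ref{L3-est} --- i.e., the strong decay of the damped trace $u^1_x(\zeta)$ absorbs the growth of $\omega_{xxx}(\zeta)$. If you want to salvage your own scheme, you would need to reproduce an estimate of this type for $\omega_{xx}(\zeta)$ itself, which the available a priori bounds do not permit; switching to the paper's choice of multiplier is the natural fix.
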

\begin{proof}
The proof of this lemma is divided into two steps.\\
\textbf{Step 1.} Multiplying \eqref{pol2} by $\overline{\omega}_{xxx}$, integrating over $(0,L)$ and using the fact that $f_2\to 0$ in $L^2(0,L)$ and \eqref{L3-est1}, we get 
\begin{equation}\label{L4-est2}
i\la \int_0^Lv^1\overline{\omega}_{xxx}dx-\frac{E_1}{\rho_1}\int_0^Lu_{xx}^1\overline{\omega}_{xxx}dx-\frac{1}{\rho_1 h_1}\int_0^L\tau \overline{\omega}_{xxx}+\frac{a}{\rho_1h_1}\int_0^Lv^1\overline{\omega}_{xxx}dx=\frac{o(1)}{\la^{\ell-\frac{1}{2}}}.
\end{equation}
Using \eqref{L1-est1} and  \eqref{L3-est1}, we get 
\begin{equation}\label{L4-est3}
\left|i\la \int_0^L v^1\overline{\omega}_{xxx}dx\right|=\frac{o(1)}{\la^{\frac{\ell-3}{2}}}\quad \text{and}\quad \left|i\int_0^L v^1\overline{\omega}_{xxx}dx\right|=\frac{o(1)}{\la^{\frac{\ell-1}{2}}}.
\end{equation}
Now, using integration by parts to the second term in \eqref{L4-est2},  \eqref{L2-est2} and \eqref{1L3-est1}, we obtain 
\begin{equation}\label{L4-est4}
\left|-\frac{E_1}{\rho_1}\int_0^Lu^1_{xx}\overline{\omega}_{xxx}dx\right|=
\left|\frac{E_1}{\rho_1}\int_0^Lu_x^1\overline{\omega}_{xxxx}dx-\frac{E_1}{\rho_1}u^1_x(L)\overline{\omega}_{xxx}(L)+\frac{E_1}{\rho_1}u^1_x(0)\overline{\omega}_{xxx}(L)\right|=\frac{o(1)}{\la^{\frac{\ell}{8}-\frac{1}{8}}}.
\end{equation}
On the other hand, using the definition of $\tau$ in \eqref{tau} and the fact that $\la u^1,\la u^3,\la y^1,\la y^3$ are uniformly bounded in $L^2(0,L)$ and \eqref{1L3-est1}, we get 
\begin{equation}\label{L4-est5}
-\frac{1}{\rho_1h_1}\int_0^L\tau\overline{\omega}_{xxx}dx=\frac{h_2}{\rho_1h_1}\int_0^L\abs{\omega_{xx}}^2dx+\frac{O(1)}{\la^{\frac{1}{2}}}=\frac{h_2}{\rho_1h_1}\int_0^L\abs{\omega_{xx}}^2dx+o(1). 
\end{equation}
Inserting \eqref{L4-est3}-\eqref{L4-est5} in \eqref{L4-est2} and using the fact that $\ell\geq 3$, we get 
the first estimation in \eqref{L4-est1}.\\
\textbf{Step 2.} Multiplying \eqref{pol6} by $-\overline{\omega}$, integrating over $(0,L)$, using the facts that $(\omega_x+y^1)$, $(\omega_x+y^3)$, $\la \omega$ are uniformly bounded in $L^2(0,L)$ and $\|\omega_x\|\leq O(\la^{-\frac{1}{2}})$, \eqref{L1-est1},  the fact that $f_2\to 0$ in $L^2(0,L)$, we get 
\begin{equation}\label{L4-est6}
-i\la \int_0^L\psi \overline{\omega}dx-\frac{EI}{\rho h}\int_0^L\omega_{xxxx}\overline{\omega}dx=o(1).
\end{equation}
Using integration by parts to the second term in \eqref{L4-est6}, and  \eqref{pol5}, we get 
\begin{equation}\label{L4-est7}
\int_0^L\abs{\psi^2}dx-\frac{EI}{\rho h}\int_0^L\abs{\omega_{xx}}^2dx=o(1).
\end{equation}
Finally, using the first estimation in \eqref{L4-est1} in \eqref{L4-est7}, we obtain the desired result. The proof has been completed. 
\end{proof}
\begin{Lemma}\label{L5}
Forall $\ell\geq 3$,  the solution {$U\in D(\mathcal{A})$} of system \eqref{pol1}-\eqref{pol10} satisfies the following asymptotic behavior estimation
\begin{equation}\label{L5-est}
\begin{array}{l}
\displaystyle
\int_0^L\abs{u_x^3}^2dx=
-\la^2\frac{\rho_1h_1\rho_3}{E_3}\left(\frac{E_3}{\rho_3}-\frac{E_1}{\rho_1}\right)\left(\frac{4G_3+h_3}{4G_3}\right)\Re\left(\int_0^Lu_x^3\overline{u^1_x}dx\right)\\[0.1in]
\displaystyle
-\la^2\frac{\rho_1h_1\rho_3I_3}{2G_3h_3E_3}\left(\frac{E_3}{\rho_3}-\frac{E_1}{\rho_1}\right)\Re\left(\int_0^Lu_x^1\overline{y^3_x}dx\right)+o(1)
\end{array}
\end{equation}
and
\begin{equation}\label{L5-est1}
\begin{array}{l}
\displaystyle
\int_0^L\abs{y_x^3}^2dx=2\la^2\frac{\rho_1h_1\rho_3}{E_3h_3}\left(\frac{E_3}{\rho_3}-\frac{E_1}{\rho_1}\right)\left(1-\frac{I_3}{G_3h_3^2}\right)\Re\left(\int_0^Lu_x^1\overline{y^3_x}dx\right)\\[0.1in]
\displaystyle
-\la^2\frac{\rho_1h_1\rho_3}{h_3E_3G_3}\left(\frac{E_3}{\rho_3}-\frac{E_1}{\rho_1}\right)\Re\left(\int_0^Lu_x^3\overline{u^1_x}dx\right)+o(1).
\end{array}
\end{equation}
\end{Lemma}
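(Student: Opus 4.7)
The plan is to derive three asymptotic expressions for the shear stress $\tau$---one from each of the $u^1$, $u^3$ and $y^3$ equations---and then equate them two at a time and test against suitable multipliers. Eliminating $v^1, v^3, z^3$ from \eqref{pol2}, \eqref{pol4}, \eqref{pol10} by means of \eqref{pol1}, \eqref{pol3}, \eqref{pol9} (and recalling that $c=d=e=0$ under \eqref{A1}), one obtains the three identities
\begin{align*}
\tau &= -\rho_1 h_1 \la^2 u^1 - E_1 h_1 u^1_{xx} + a v^1 + O(|\la|^{1-\ell}),\\
\tau &= \rho_3 h_3 \la^2 u^3 + E_3 h_3 u^3_{xx} + O(|\la|^{1-\ell}),\\
\frac{h_3}{2}\tau &= -\rho_3 I_3 \la^2 y^3 - E_3 I_3 y^3_{xx} + G_3 h_3(\omega_x + y^3) + O(|\la|^{1-\ell}).
\end{align*}

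For \eqref{L5-est}, I would equate the first two expressions for $\tau$ and test the resulting equation
\[
\rho_3 h_3 \la^2 u^3 + E_3 h_3 u^3_{xx} + \rho_1 h_1 \la^2 u^1 + E_1 h_1 u^1_{xx} = a v^1 + O(|\la|^{1-\ell})
\]
first against $\overline{u^3}$ and then against $\overline{u^1}$. Integration by parts (boundary contributions vanish by the Dirichlet conditions on $u^1, u^3$), together with the smallness estimates on $u^1, u^1_x, v^1$ supplied by Lemmas \ref{dissipation} and \ref{L2-est}, yields the two intermediate relations
\[
E_3 h_3 \int_0^L |u^3_x|^2\,dx = \rho_3 h_3 \la^2 \int_0^L |u^3|^2\,dx + \rho_1 h_1 \la^2 \int_0^L u^1 \overline{u^3}\,dx - E_1 h_1 \int_0^L u^1_x \overline{u^3_x}\,dx + o(1),
\]
\[
\la^2 \int_0^L u^3 \overline{u^1}\,dx = \frac{E_3}{\rho_3}\int_0^L u^3_x \overline{u^1_x}\,dx + o(1).
\]
Substituting the second into the first and taking real parts collapses the cross-term coefficient to $\rho_1 h_1(E_3/\rho_3 - E_1/\rho_1)$, so that
\[
E_3 h_3 \int_0^L |u^3_x|^2\,dx = \rho_3 h_3 \la^2 \int_0^L |u^3|^2\,dx + \rho_1 h_1\!\left(\frac{E_3}{\rho_3} - \frac{E_1}{\rho_1}\right)\!\Re\!\int_0^L u^3_x \overline{u^1_x}\,dx + o(1).
\]

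To dispose of the residual $\rho_3 h_3 \la^2 \int |u^3|^2$ appearing above, I would equate the second and third expressions for $\tau$ and test the outcome against $\overline{u^3}$ (and, after multiplication by appropriate constants, against $\overline{u^1}$), exploiting Lemma \ref{L4-est}, which gives $\|\omega_{xx}\| = o(1)$ and $\|\psi\| = o(1)$, together with Lemma \ref{L3-est} to absorb the remaining $\omega$-contributions into $o(1)$. This rewrites $\la^2 \int |u^3|^2$ as a linear combination of $\la^2 \Re\!\int u^3_x\overline{u^1_x}$ and $\la^2 \Re\!\int u^1_x\overline{y^3_x}$ with coefficients depending only on $\rho_3, I_3, G_3, h_3, E_3$; substituting back delivers \eqref{L5-est}. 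The companion identity \eqref{L5-est1} for $\int|y^3_x|^2$ is obtained in an entirely analogous way, starting from the third expression for $\tau$ tested against $\overline{y^3}$ and combined with the first two.

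The main obstacle is precisely this last substitution step: carefully bookkeeping the coefficients produced by the rotational-inertia term $\rho_3 I_3 \la^2 y^3$ and the shear term $G_3 h_3(\omega_x + y^3)$ while eliminating $\la^2 \int |u^3|^2$. It is here that the unusual factors $(4G_3+h_3)/(4G_3)$ and $I_3/(G_3 h_3^2)$ appearing in \eqref{L5-est}--\eqref{L5-est1} are generated, and one must simultaneously verify that every $\omega$-derived quantity truly falls into the $o(1)$ tolerance, which is why the sharper Lemmas \ref{L3-est} and \ref{L4-est} are invoked rather than Lemma \ref{dissipation} alone.
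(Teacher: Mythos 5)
Your overall strategy---pairing the equations through $\tau$ and testing against multipliers---is in the right spirit, but the multipliers you choose are of the wrong order, and this is not cosmetic: it produces identities that cannot be assembled into \eqref{L5-est}--\eqref{L5-est1}. Testing against $\overline{u^1}$, $\overline{u^3}$, $\overline{y^3}$ yields cross terms of the form $\Re\int_0^L u^3_x\overline{u^1_x}\,dx$ \emph{without} the factor $\la^2$ (your own intermediate relation $E_3h_3\int\abs{u^3_x}^2 = \rho_3h_3\la^2\int\abs{u^3}^2 + \rho_1h_1\bigl(\frac{E_3}{\rho_3}-\frac{E_1}{\rho_1}\bigr)\Re\int u^3_x\overline{u^1_x} + o(1)$ shows this), whereas the lemma asserts an identity in $\la^2\Re\int u^3_x\overline{u^1_x}\,dx$ and $\la^2\Re\int u^1_x\overline{y^3_x}\,dx$. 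Those $\la^2$-weighted cross terms are exactly what is needed downstream (in Case 2 of the proof of Theorem \ref{POL-ab} one uses $\|u^1_x\|=o(\la^{-\ell/4-3/4})$ with $\ell=5$ to make $\la^2$ times the cross term $o(1)$), and they arise only when the equations are tested against \emph{second} derivatives: the paper multiplies \eqref{pol2} by $\frac{E_3}{\rho_3}\overline{u^3_{xx}}$ and \eqref{pol4} by $\frac{E_1}{\rho_1}\overline{u^1_{xx}}$, so that $i\la\int v^1\overline{u^3_{xx}}$ integrates by parts to $\la^2\frac{E_3}{\rho_3}\int u^1_x\overline{u^3_x}$, the second-order terms $\frac{E_1E_3}{\rho_1\rho_3}\int u^1_{xx}\overline{u^3_{xx}}$ cancel on subtraction, and the prefactor $\bigl(\frac{E_3}{\rho_3}-\frac{E_1}{\rho_1}\bigr)$ lands on the $\la^2$-weighted cross term. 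No $\la^2\int\abs{u^3}^2$ term ever appears, so there is nothing to ``dispose of.''

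Second, the step you yourself flag as the main obstacle is where the proof actually lives, and your sketch of it would fail. Equating the $u^3$- and $y^3$-expressions for $\tau$ and testing against $\overline{u^3}$ produces terms such as $\la^2\int y^3\overline{u^3}$, $\int y^3_x\overline{u^3_x}$ and $\int(\omega_x+y^3)\overline{u^3}$; these involve only layer-3 quantities, and there is no mechanism by which they turn into a combination of $\la^2\Re\int u^3_x\overline{u^1_x}$ and $\la^2\Re\int u^1_x\overline{y^3_x}$ --- the coupling to $u^1$ enters only through $\tau$ in the $u^1$-equation, which you have already consumed. The paper instead derives \emph{three} identities with second-derivative multipliers: two of them (Steps 1 and 2, from the pairs \eqref{pol2}/\eqref{pol4} and \eqref{pol2}/\eqref{pol10}) carry the $\la^2\bigl(\frac{E_3}{\rho_3}-\frac{E_1}{\rho_1}\bigr)$-weighted cross terms with $u^1_x$, and the third (Step 3, from \eqref{pol4} against $\overline{y^3_{xx}}$ and \eqref{pol10} against $\overline{u^3_{xx}}$) relates $\int\abs{u^3_x}^2$, $\int\abs{y^3_x}^2$ and $\Re\int y^3_x\overline{u^3_x}$ with coefficients in $G_3,h_3,I_3$. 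These three relations form a linear system in the unknowns $\int\abs{u^3_x}^2$, $\int\abs{y^3_x}^2$, $\Re\int y^3_x\overline{u^3_x}$, and solving it (Steps 4--5) is precisely what generates the coefficients $\frac{4G_3+h_3}{4G_3}$ and $1-\frac{I_3}{G_3h_3^2}$. As written, your argument has a genuine gap at its central elimination step.
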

\begin{proof}
The proof of this Lemma is divided into several steps.\\
\textbf{Step 1.} In this step, we will prove the following estimation 
\begin{equation}\label{L5-step1-1}
\la^2\frac{\rho_1h_1\rho_3}{E_3}\left(\frac{E_3}{\rho_3}-\frac{E_1}{\rho_1}\right)\Re\left(\int_0^Lu_x^3\overline{u^1_x}dx\right)+\int_0^L\abs{u_x^3}^2dx-\frac{h_3}{2}\Re\left(\int_0^Ly_x^3\overline{u_x^3}dx\right)=o(1).
\end{equation}
For this aim, multiplying \eqref{pol2} and \eqref{pol4} by $\frac{E_3}{\rho_3}\overline{u_{xx}^3}$ and $\frac{E_1}{\rho_1}\overline{u_{xx}^1}$ respectively, integrating over $(0,L)$,  using the fact that $\|u^3_{xx}\|=O(\la)$, $\|u^1_{xx}\|=O(\la^{-\frac{1}{2}})$ and $f_2,f_8\to 0$ in $L^2(0,L)$, we get 
\begin{equation}\label{L5-step1-2}
i\la\frac{E_3}{\rho_3}\int_0^Lv^1\overline{u^3_{xx}}dx-\frac{E_1E_3}{\rho_1\rho_3}\int_0^Lu^1_{xx}\overline{u^3_{xx}}dx-\frac{E_3}{\rho_1h_1\rho_3}\int_0^L\tau \overline{u^3_{xx}}dx+\frac{aE_3}{\rho_1h_1\rho_3}\int_0^Lv^1\overline{u^3_{xx}}dx=\frac{o(1)}{\la^{\ell-1}}
\end{equation}
and 
\begin{equation}\label{L5-step1-3}
i\la\frac{E_1}{\rho_1}\int_0^Lv^3\overline{u^1_{xx}}dx-\frac{E_1E_3}{\rho_1\rho_3}\int_0^Lu^3_{xx}\overline{u^1_{xx}}dx+\frac{E_1}{\rho_1h_3\rho_3}\int_0^L\tau \overline{u^1_{xx}}dx=\frac{o(1)}{\la^{\ell+\frac{1}{2}}}.
\end{equation}
Using \eqref{pol1} and \eqref{pol3} in the first term in \eqref{L5-step1-2} and \eqref{L5-step1-3}, using integration by parts and the facts that $f_1,f_7\to 0$ in $H_0^1(0,L)$ and $u^1_x$, $u^3_x$ are uniformly bounded in $L^2(0,L)$, we get 
\begin{equation}\label{L5-step1-4}
i\la\frac{E_3}{\rho_3}\int_0^Lv^1\overline{u^3_{xx}}dx=\la^2\frac{E_3}{\rho_3}\int_0^Lu_x^1\overline{u_x^3}dx+\frac{o(1)}{\la^{\ell-1}}\ \text{and}\ i\la\frac{E_1}{\rho_1}\int_0^Lv^3\overline{u^1_{xx}}dx=\la^2\frac{E_1}{\rho_1}\int_0^Lu_x^3\overline{u_x^1}dx+\frac{o(1)}{\la^{\ell-1}}.
\end{equation}
Now, using \eqref{tau} and integrating by parts over $(0,L)$ and using the fact that $u_x^3$ is uniformly bounded in $L^2(0,L)$, Lemma \ref{L4-est} and Lemma  \ref{L2-est}, we get 
\begin{equation}\label{L5-step1-5}
-\frac{E_3}{\rho_1h_1\rho_3}\int_0^L\tau\overline{u_{xx}^3}dx=\frac{E_3}{\rho_1h_1h_3}\int_0^L\abs{u_x^3}^2dx-\frac{E_3h_3}{2\rho_1h_1\rho_3}\int_0^Ly_x^3\overline{u_x^3}dx+o(1).
\end{equation}
and 
\begin{equation}\label{L5-step1-6}
\frac{E_1}{\rho_1h_3\rho_3}\int_0^L\tau \overline{u^1_{xx}}dx=-\frac{E_1}{\rho_1h_3\rho_3}\int_0^L\tau_x \overline{u^1_{x}}dx=o(1).
\end{equation}
Using integration by parts to the last term in the left hand side of \eqref{L5-step1-2}, and using the fact that $u^3_x$ is uniformly bounded in $L^2(0,L)$ and Lemma \ref{L2-est}, we get 
\begin{equation}\label{L5-step1-7}
\frac{aE_3}{\rho_1h_1\rho_3}\int_0^Lv^1\overline{u^3_{xx}}dx=-\frac{aE_3}{\rho_1h_1\rho_3}\int_0^Lv^1_x\overline{u^3_{x}}dx=\frac{o(1)}{\la^{\frac{\ell}{4}-\frac{1}{4}}}.
\end{equation}
Inserting \eqref{L5-step1-4}, \eqref{L5-step1-5} and \eqref{L5-step1-7} in \eqref{L5-step1-2}, \eqref{L5-step1-4} and \eqref{L5-step1-6} in \eqref{L5-step1-3},  and using the facts that $\ell\geq 3$, we get 
\begin{equation}\label{L5-step1-8}
\begin{array}{c}
\displaystyle
\la^2\frac{E_3}{\rho_3}\int_0^Lu_x^1\overline{u_x^3}dx-\frac{E_1E_3}{\rho_1\rho_3}\int_0^Lu^1_{xx}\overline{u^3_{xx}}dx+\frac{E_3}{\rho_1h_1h_3}\int_0^L\abs{u_x^3}^2dx\\[0.1in]
\displaystyle
-\frac{E_3h_3}{2\rho_1h_1\rho_3}\int_0^Ly_x^3\overline{u_x^3}dx=o(1)
\end{array}
\end{equation}
and 
\begin{equation}\label{L5-step1-9}
\la^2\frac{E_1}{\rho_1}\int_0^Lu_x^3\overline{u_x^1}dx-\frac{E_1E_3}{\rho_1\rho_3}\int_0^Lu^3_{xx}\overline{u^1_{xx}}dx=o(1).
\end{equation}
Subtracting \eqref{L5-step1-8} and \eqref{L5-step1-9} and taking the real part, we get the desired equation \eqref{L5-step1-1}.\\
\textbf{Step 2.}The aim of this step is to prove the following estimation
\begin{equation}\label{L5-step2-1}
\la^2\frac{\rho_1h_1\rho_3}{E_3}\left(\frac{E_3}{\rho_3}-\frac{E_1}{\rho_1}\right)\Re\left(\int_0^Lu_x^1\overline{y^3_x}dx\right)+\Re\left(\int_0^Lu_x^3\overline{y_x^3}dx\right)-\frac{h_3}{2}\int_0^L\abs{y_x^3}^2dx=o(1).
\end{equation}
For this aim, multiplying \eqref{pol2} and \eqref{pol10}  by $\frac{1}{2}h_3\frac{E_3}{\rho_3}\overline{y^3_{xx}}$ and $\frac{1}{2}h_3\frac{E_1}{\rho_1}\overline{u^1_{xx}}$ respectively, integrating over $(0,L)$, and using the fact that $\|y^3_{xx}\|=O(\la)$ and $\|u^1_{xx}\|=O(\la^{-\frac{1}{2}})$ and $f_2,f_{10}\to 0$ in $L^2(0,L)$, we get 
\begin{equation}\label{L5-step2-2}
i\la h_3\frac{E_3}{2\rho_3}\int_0^Lv^1\overline{y^3_{xx}}dx-\frac{E_1E_3h_3}{2\rho_1\rho_3}\int_0^Lu^1_{xx}\overline{y^3_{xx}}dx-\frac{E_3h_3}{2\rho_1h_1\rho_3}\int_0^L\tau \overline{y^3_{xx}}dx+\frac{aE_3h_3}{2\rho_1h_1\rho_3}\int_0^Lv^1\overline{y^3_{xx}}dx=\frac{o(1)}{\la^{\ell-1}}
\end{equation}
and 
\begin{equation}\label{L5-step2-3}
\begin{array}{c}
\displaystyle
i\la h_3\frac{E_1}{2\rho_1}\int_0^Lz^3\overline{u^1_{xx}}dx-\frac{E_1E_3h_3}{2\rho_1\rho_3}\int_0^Ly^3_{xx}\overline{u^1_{xx}}dx-\frac{h_3^2E_1}{4\rho_1\rho_3I_3}\int_0^L\tau\overline{u^1_{xx}}dx\\[0.1in]
\displaystyle
+\frac{G_3h_3^2E_1}{2\rho_1\rho_3I_3}\int_0^L(\omega_x+y^3)\overline{u^1_{xx}}dx=\frac{o(1)}{\la^{\ell+\frac{1}{2}}}.
\end{array}
\end{equation}
Using \eqref{pol1} and \eqref{pol9} in the first term in \eqref{L5-step2-2} and \eqref{L5-step2-3}, using integration by parts and the facts that $f_1,f_9\to 0$ in $H_0^1(0,L)$ and $y^3_x$ and $u^1_x$ are uniformly bounded in $L^2(0,L)$, we get 
\begin{equation}\label{L5-step2-4}
i\la\frac{E_3h_3}{2\rho_3}\int_0^Lv^1\overline{y^3_{xx}}dx=\la^2\frac{E_3h_3}{2\rho_3}\int_0^Lu_x^1\overline{y_x^3}dx+\frac{o(1)}{\la^{\ell-1}},
\end{equation}
and
\begin{equation}\label{L5-step2-5}
i\la\frac{E_1h_3}{2\rho_1}\int_0^Lz^3\overline{u^1_{xx}}dx=\la^2\frac{E_1h_3}{2\rho_1}\int_0^Ly_x^3\overline{u_x^1}dx+\frac{o(1)}{\la^{\ell-1}}.
\end{equation}
Now, using integration by parts over $(0,L)$,  \eqref{tau} and  the facts that $y_x^3$ and $\tau_x$ are uniformly bounded in $L^2(0,L)$, Lemma \ref{L4-est} and  \ref{L2-est}, we get 
\begin{equation}\label{L5-step2-6}
-\frac{E_3h_3}{2\rho_1h_1\rho_3}\int_0^L\tau \overline{y^3_{xx}}dx=-\frac{E_3h_3^2}{4\rho_1h_1\rho_3}\int_0^L\abs{y_x^3}^2dx+\frac{E_3h_3}{2\rho_1h_1\rho_3}\int_0^Lu_x^3\overline{y_x^3}dx+o(1).
\end{equation}
and 
\begin{equation}\label{L5-step2-7}
-\frac{h_3^2E_1}{4\rho_1\rho_3I_3}\int_0^L\tau\overline{u^1_{xx}}dx=\frac{h_3^2E_1}{4\rho_1\rho_3I_3}\int_0^L\tau_x\overline{u^1_{x}}dx=\frac{o(1)}{\la^{\frac{\ell+3}{4}}}.
\end{equation}
Using integration by parts to the last term in left hand side of \eqref{L5-step2-2} and using the fact that $y_x^3$ is uniformly bounded in $L^2(0,L)$ and Lemma \ref{L2-est}, we get 
\begin{equation}\label{L5-step2-8}
\frac{aE_3h_3}{2\rho_1h_1\rho_3}\int_0^Lv^1\overline{y^3_{xx}}dx=-\frac{aE_3h_3}{2\rho_1h_1\rho_3}\int_0^Lv^1_x\overline{y^3_{x}}dx=\frac{o(1)}{\la^{\frac{\ell-1}{4}}}.
\end{equation}
Using integration by parts to the last term in the left hand side of \eqref{L5-step2-3}, and using the fact that $u_x^1, y_x^3$ are uniformly bounded in $L^2(0,L)$ and Lemma  \ref{L4-est}, we get 
\begin{equation}\label{L5-step2-9}
\frac{G_3h_3^2E_1}{2\rho_1\rho_3I_3}\int_0^L(\omega_x+y^3)\overline{u^1_{xx}}dx=o(1). 
\end{equation}
Inserting \eqref{L5-step2-4}, \eqref{L5-step2-6} and \eqref{L5-step2-8} in \eqref{L5-step2-2}, \eqref{L5-step2-5}, \eqref{L5-step2-7} and \eqref{L5-step2-9} in \eqref{L5-step2-3} and the fact that $\ell\geq 3$, we get 
\begin{equation}\label{L5-step2-10}
\la^2\frac{E_3h_3}{2\rho_3}\int_0^Lu_x^1\overline{y_x^3}dx-\frac{E_1E_3h_3}{2\rho_1\rho_3}\int_0^Lu^1_{xx}\overline{y^3_{xx}}dx-\frac{E_3h_3^2}{4\rho_1h_1\rho_3}\int_0^L\abs{y_x^3}^2dx+\frac{E_3h_3}{2\rho_1h_1\rho_3}\int_0^Lu_x^3\overline{y_x^3}dx=o(1)
\end{equation}
and
\begin{equation}\label{L5-step2-11}
\la^2\frac{E_1h_3}{2\rho_1}\int_0^Ly_x^3\overline{u_x^1}dx-\frac{E_1E_3h_3}{2\rho_1\rho_3}\int_0^Ly^3_{xx}\overline{u^1_{xx}}dx=o(1).
\end{equation}
Subtracting \eqref{L5-step2-10} and \eqref{L5-step2-11} and taking the real part, we reach \eqref{L5-step2-1}.\\
\textbf{Step 3.} The aim of this step is to prove the following estimation 
\begin{equation}\label{L5-step3-1}
\frac{1}{2}\int_0^L\abs{y_x^3}^2dx-\frac{h_3}{2I_3}\int_0^L\abs{u_x^3}^2dx+\left(-\frac{1}{h_3}+\frac{h_3^2}{4I_3}+\frac{G_3h_3}{I_3}\right)\Re\left(\int_0^Ly_x^3\overline{u^3_x}dx\right)=o(1).
\end{equation}
For this aim, multiplying \eqref{pol4} and \eqref{pol10}  by $\overline{y^3_{xx}}$ and $\overline{u_{xx}^3}$ respectively, integrating over $(0,L)$,  using  \eqref{pol3} and \eqref{pol9}, the facts that $f_7,f_9\to 0$ in $H_0^1(0,L)$ and $\|u_{xx}^3\|,\|y^3_{xx}\|=O(\la)$, and taking the real part of their subtract, we get 
\begin{equation}\label{L5-step3-2}
\frac{1}{\rho_3h_3}\Re\left(\int_0^L\tau\overline{y^3_{xx}}dx\right)+\frac{h_3}{2\rho_3I_3}\Re\left(\int_0^L\tau\overline{u_{xx}^3}dx\right)-\frac{G_3h_3}{\rho_3I_3}\Re\left(\int_0^L\left(\omega_x+y^3\right)\overline{u^3_{xx}}dx\right)=\frac{o(1)}{\la^{\ell-2}}.
\end{equation}
Using integration by parts, Lemmas \ref{L2-est}, \ref{L4-est}, and the fact that $\overline{u^3_x}$ and $\overline{y^3_x}$ are uniformly bounded in $L^2(0,L)$, we get 
\begin{equation}\label{L5-step3-3}
\left\{\begin{array}{l}
\displaystyle 
\frac{1}{\rho_3h_3}\Re\left(\int_0^L\tau\overline{y^3_{xx}}dx\right)=-\frac{1}{\rho_3h_3}\Re\left(\int_0^Lu^3_x\overline{y^3_x}dx\right)+\frac{1}{2\rho_3}\int_0^L\abs{y^3_x}^2dx+o(1),\\[0.1in]
\displaystyle
\frac{h_3}{2\rho_3I_3}\Re\left(\int_0^L\tau\overline{u_{xx}^3}dx\right)=-\frac{h_3}{2\rho_3I_3}\int_0^L\abs{u^3_x}^2dx+\frac{h_3^2}{4\rho_3I_3}\Re\left(\int_0^Ly_x^3\overline{u_x^3}dx\right)+o(1),\\[0.1in]
\displaystyle
-\frac{G_3h_3}{\rho_3I_3}\Re\left(\int_0^L\left(\omega_x+y^3\right)\overline{u^3_{xx}}dx\right)=\frac{G_3h_3}{\rho_3I_3}\Re\left(\int_0^Ly^3_x\overline{u^3_x}dx\right)+o(1).
\end{array}
\right.
\end{equation}
Inserting \eqref{L5-step3-3} in \eqref{L5-step3-2}, we get the desired result \eqref{L5-step3-1}.\\
\textbf{Step 4.} The aim of this step is to prove the following estimations. 
\begin{equation}\label{L5-step4-1}
\begin{array}{l}
\displaystyle
\Re\left(\int_0^Ly_x^3\overline{u^3_x}dx\right)=-\la^2\frac{\rho_1h_1\rho_3}{2E_3G_3}\left(\frac{E_3}{\rho_3}-\frac{E_1}{\rho_1}\right)\Re\left(\int_0^Lu_x^3\overline{u^1_x}dx\right)\\[0.1in]
\displaystyle
-\la^2\frac{\rho_1h_1\rho_3I_3}{G_3h_3^2E_3}\left(\frac{E_3}{\rho_3}-\frac{E_1}{\rho_1}\right)\Re\left(\int_0^Lu_x^1\overline{y^3_x}dx\right)+o(1).
\end{array}
\end{equation}
For this aim, multiplying \eqref{L5-step1-1} by $\frac{h_3}{2I_3}$, we get 
\begin{equation*}
\la^2\frac{\rho_1h_1\rho_3h_3}{2I_3E_3}\left(\frac{E_3}{\rho_3}-\frac{E_1}{\rho_1}\right)\Re\left(\int_0^Lu_x^3\overline{u^1_x}dx\right)+\frac{h_3}{2I_3}\int_0^L\abs{u_x^3}^2dx-\frac{h_3^2}{4I_3}\Re\left(\int_0^Ly_x^3\overline{u_x^3}dx\right)=o(1).
\end{equation*}
Adding \eqref{L5-step3-1} and the above equation, we get 
\begin{equation}\label{L5-step4-2}
\frac{1}{2}\int_0^L\abs{y_x^3}^2dx+\left(-\frac{1}{h_3}+\frac{G_3h_3}{I_3}\right)\Re\left(\int_0^Ly_x^3\overline{u^3_x}dx\right)+\la^2\frac{\rho_1h_1\rho_3h_3}{2I_3E_3}\left(\frac{E_3}{\rho_3}-\frac{E_1}{\rho_1}\right)\Re\left(\int_0^Lu_x^3\overline{u^1_x}dx\right)=o(1).
\end{equation}
Now, multiplying \eqref{L5-step2-1} by $h_3^{-1}$, we get 
\begin{equation*}
\la^2\frac{\rho_1h_1\rho_3}{h_3E_3}\left(\frac{E_3}{\rho_3}-\frac{E_1}{\rho_1}\right)\Re\left(\int_0^Lu_x^1\overline{y^3_x}dx\right)+\frac{1}{h_3}\Re\left(\int_0^Lu_x^3\overline{y_x^3}dx\right)-\frac{1}{2}\int_0^L\abs{y_x^3}^2dx=o(1).
\end{equation*}
Then, adding the above equation and \eqref{L5-step4-2}, we get \eqref{L5-step4-1}.\\
\textbf{Step 5.} The aim of this step is to proof \eqref{L5-est} and \eqref{L5-est1}.
Inserting \eqref{L5-step4-1} in \eqref{L5-step1-1}, we get \eqref{L5-est}. Similarly, inserting \eqref{L5-step4-1} in \eqref{L5-step2-1}, we get \eqref{L5-est1}. The proof has been completed. 
\end{proof}
\begin{Lemma}\label{L6}
Assume that \eqref{A1} holds. For $\ell\geq 3$, the solution $U\in D(\mathcal{A})$ of system \eqref{pol1}-\eqref{pol10} satisfies the following asymptotic behavior estimations 
\begin{equation}\label{L6-est}
\int_0^L\abs{v^3}^2-\frac{E_3}{\rho_3}\int_0^L\abs{u_x^3}^2dx=\frac{O(1)}{\la^{\frac{3}{2}}}\quad \text{and}\quad \int_0^L\abs{z^3}^2dx-\frac{E_3}{\rho_3}\int_0^L\abs{y_x^3}^2dx=\frac{O(1)}{\la^{\frac{3}{2}}}.
\end{equation}
\end{Lemma}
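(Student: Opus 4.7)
The strategy is the standard ``multiplier by $\overline{u^3}$ (resp.\ $\overline{y^3}$)'' used to convert the kinetic part into the potential part in a damped wave equation, and then to read off the error from the already-established bounds $\|\tau\|_{L^2}=O(\la^{-1/2})$ (Lemma \ref{dissipation}), $\|\omega_x\|_{L^2}=O(\la^{-1/2})$ (\eqref{L1-est3}), and the fact that $\la u^3,\ \la y^3$ are uniformly bounded in $L^2(0,L)$, so that $\|u^3\|_{L^2},\|y^3\|_{L^2}=O(\la^{-1})$. Under hypothesis \eqref{A1} we have $d=e=0$, which is what makes the plain identity work.

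For the first estimate I would multiply \eqref{pol4} by $\overline{u^3}$, integrate over $(0,L)$, and integrate the $u^3_{xx}$ term by parts using $u^3(0)=u^3(L)=0$. This produces
\[
i\la \int_0^L v^3\overline{u^3}\,dx + \tfrac{E_3}{\rho_3}\int_0^L |u_x^3|^2\,dx + \tfrac{1}{\rho_3 h_3}\int_0^L \tau\,\overline{u^3}\,dx = \la^{-\ell}\int_0^L f_8\,\overline{u^3}\,dx .
\]
From \eqref{pol3} one has $\overline{i\la u^3}=\overline{v^3}+\la^{-\ell}\overline{f_7}$, hence $i\la\int v^3\overline{u^3}\,dx=-\int|v^3|^2\,dx-\la^{-\ell}\int v^3\overline{f_7}\,dx$, and the last integral is $o(\la^{-\ell})$ since $v^3$ is bounded in $L^2$ and $f_7\to 0$ in $H^1_0(0,L)$. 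For the remaining terms, Cauchy--Schwarz gives $|\int\tau\overline{u^3}\,dx|\le\|\tau\|\,\|u^3\|=O(\la^{-1/2})\cdot O(\la^{-1})=O(\la^{-3/2})$, and similarly $\la^{-\ell}|\int f_8\,\overline{u^3}\,dx|=o(\la^{-\ell-1})$. Rearranging yields the first identity in \eqref{L6-est}.

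For the second estimate I would do the analogous thing with \eqref{pol10} multiplied by $\overline{y^3}$. After integrating the $y^3_{xx}$ term by parts and using $y^3(0)=y^3(L)=0$, and then replacing $i\la\int z^3\overline{y^3}\,dx$ by $-\int|z^3|^2\,dx+o(\la^{-\ell})$ through \eqref{pol9}, three ``error'' integrals remain: $\int\tau\overline{y^3}\,dx$, $\int(\omega_x+y^3)\overline{y^3}\,dx$, and $\la^{-\ell}\int f_{10}\overline{y^3}\,dx$. The first is $O(\la^{-3/2})$ by $\|\tau\|=O(\la^{-1/2})$ and $\|y^3\|=O(\la^{-1})$; the second is $O(\la^{-3/2})$ because $\|\omega_x\|=O(\la^{-1/2})$ and $\|y^3\|=O(\la^{-1})$ (the $\int|y^3|^2$ piece being even smaller, $O(\la^{-2})$); the third is $o(\la^{-\ell-1})$.

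There is no real obstacle here: all the ingredients were assembled in Lemmas \ref{dissipation}--\ref{L2-est}, and the only thing to watch is that the viscous-damping term does not enter (which is precisely why \eqref{A1}, with $d=e=0$, is assumed) and that the cross term $\int\omega_x\overline{y^3}\,dx$ is controlled by the $\la^{-1/2}$-decay of $\omega_x$ combined with the $\la^{-1}$-decay of $y^3$. Collecting everything proves \eqref{L6-est}.
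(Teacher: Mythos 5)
Your proposal is correct and follows essentially the same route as the paper's own proof: multiply \eqref{pol4} and \eqref{pol10} by (a sign of) $\overline{u^3}$ and $\overline{y^3}$, integrate by parts, substitute $-i\la\overline{u^3}=\overline{v^3}+\la^{-\ell}\overline{f_7}$ (and the analogue for $y^3$ via \eqref{pol9}), and absorb the $\tau$ and $(\omega_x+y^3)$ terms into $O(\la^{-3/2})$ using $\|\tau\|=O(\la^{-1/2})$ together with $\|u^3\|,\|y^3\|=O(\la^{-1})$. No gaps.
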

\begin{proof}
Multiplying \eqref{pol4} and \eqref{pol10}  by $-\overline{u^3}$ and $-\overline{y^3}$ respectively , integrating by parts over $(0,L)$ and  the facts that $\la u^3,\ \la y^3$ are  uniformly bounded in $L^2(0,L)$, Lemma \ref{L1-est1} and $f_8,\ f_{10}\to 0$ in $L^2(0,L)$, we get 
\begin{equation}\label{L6-est-1}
-i\la\int_0^Lv^3\overline{u^3}dx-\frac{E_3}{\rho_3}\int_0^L\abs{u^3_x}^2dx=\frac{O(1)}{\la^{\frac{3}{2}}}\quad \text{and}\quad -i\la\int_0^Lz^3\overline{y^3}dx-\frac{E_3}{\rho_3}\int_0^L\abs{y^3_x}^2dx=\frac{O(1)}{\la^{\frac{3}{2}}}.
\end{equation}
From \eqref{pol3}, we have 
$$
-i\la \overline{u^3}=\overline{v^3}+\frac{\overline{f_7}}{\la^\ell} \quad \text{and}\quad -i\la \overline{y^3}=\overline{z^3}+\frac{\overline{f_9}}{\la^\ell}.
$$
Inserting the above equation in \eqref{L6-est-1} and using the facts that $v^3,\ z^3$ are uniformly bounded in $L^2(0,L)$, $f_7, f_9\to 0$ in $H_0^1(0,L)$, $\ell\geq 3$, we get  \eqref{L6-est}. The proof has been completed
\end{proof}
$\newline$\\
{We are now ready to finish the proof of Theorme \ref{POL-ab}}. We distinguish two cases.\\
\textbf{Case 1.} If $\dfrac{E_1}{\rho_1}=\dfrac{E_3}{\rho_3}$ taking $\ell=3$, then from Lemmas \ref{dissipation}-\ref{L6}, we get 
$$
\begin{array}{lllll}
\displaystyle
\int_0^L\abs{v^1}^2dx=\frac{o(1)}{\la^3},&\displaystyle 
\int_0^L\abs{z^1}^2dx=\frac{o(1)}{\la^3},&\displaystyle
\int_0^L\abs{u_x^1}^2dx=\frac{o(1)}{\la^3},&\displaystyle \int_0^L\abs{u_x^3}^2dx=\frac{o(1)}{\la^3},&\displaystyle \int_0^L\abs{\psi}^2dx=o(1),\\[0.1in]
\displaystyle\int_0^L\abs{\omega_{xx}}^2dx=o(1),&\displaystyle 
\int_0^L\abs{u^3_x}^2dx=o(1),&\displaystyle  \int_0^L\abs{y^3_x}^2dx=o(1),&\displaystyle \int_0^L\abs{v^3}^2dx=o(1),&\displaystyle\int_0^L\abs{z^3}^2dx=o(1).
\end{array}
$$
It follows that {$\|U\|_{\mathcal{H}}=o(1)$}, which contradicts ${\rm (H_2)}$. This implies that 
$$
\sup_{\la\in \mathbb{R}}\|(i\la I-\mathcal{A})^{-1}\|_{\mathcal{H}}{\le} O(\la^{3}).
$$
\textbf{Case 2.} If $\dfrac{E_1}{\rho_1}\neq \dfrac{E_3}{\rho_3}$ and $\ell=5$, then from Lemmas \ref{dissipation}-\ref{L4-est}, we get 
\begin{equation}\label{F1}
\begin{array}{lll}
\displaystyle
\int_0^L\abs{v^1}^2dx=\frac{o(1)}{\la^5},&\displaystyle 
\int_0^L\abs{z^1}^2dx=\frac{o(1)}{\la^5},&\displaystyle
\int_0^L\abs{u_x^1}^2dx=\frac{o(1)}{\la^5},\\[0.1in]
\displaystyle \int_0^L\abs{u_x^3}^2dx=\frac{o(1)}{\la^5},&\displaystyle \int_0^L\abs{\psi}^2dx=o(1),&
\displaystyle\int_0^L\abs{\omega_{xx}}^2dx=o(1),
\end{array}
\end{equation}
Using the fact that $\dfrac{E_1}{\rho_1}\neq \dfrac{E_3}{\rho_3}$ , Lemma \ref{L5} and \eqref{F1}, we get 
\begin{equation}\label{F2}
\int_0^L\abs{u^3_x}^2dx=o(1),\quad \text{and}\quad \int_0^L\abs{y^3_x}^2dx=o(1).
\end{equation}
Finally, using \eqref{F2} and Lemma \ref{L6}, we get 
\begin{equation}\label{F3}
\int_0^L\abs{v^3}^2dx=o(1)\quad \text{and}\quad \int_0^L\abs{z^3}^2dx=o(1).
\end{equation}
Then from \eqref{F1}-\eqref{F3}, we obtain { $\|U\|_{\mathcal{H}}=o(1)$}, which contradicts ${\rm (H_2)}$. This implies that 
$$
\sup_{\la\in \mathbb{R}}\|(i\la I-\mathcal{A})^{-1}\|_{\mathcal{H}}{\le}O(\la^{5}).
$$
The proof has been completed. 
\subsection{Proof of Theorem \ref{POL-ac}.}
In this subsection, we assume that \eqref{A2} holds and  we set the following hypotheses:
\begin{equation}\label{H3}\tag{${\rm H_3}$}
\left(\frac{E_1}{\rho_1}=\frac{E_3}{\rho_3},\quad \frac{G_3h_3}{\rho_3I_3}\neq \frac{G_1h_1}{\rho_1I_1}\right)\quad \text{and}\quad \ell=2,
\end{equation}
\begin{equation}\label{H4}\tag{${\rm H_4}$}
\quad \left(\frac{E_1}{\rho_1}=\frac{E_3}{\rho_3}\quad\text{and}\quad \frac{G_3h_3}{\rho_3I_3}= \frac{G_1h_1}{\rho_1I_1}\right)\quad \text{and}\quad \ell=6,
\end{equation}
\begin{equation}\label{H5}\tag{${\rm H_5}$}
\frac{E_1}{\rho_1}\neq \frac{E_3}{\rho_3}\ \quad \text{and}\quad \ell=6.
\end{equation}
 We will check the condition ${\rm (H_2)}$ by finding a contradiction with \eqref{1pol1}. From \eqref{pol1}-\eqref{pol10}, we obtain the following systems 
\begin{eqnarray}
\la^2u^1+(\rho_1h_1)^{-1}\left[E_1h_1u^1_{xx}+\tau-i\la au^1\right]&=&-\la^{-\ell}f_2-\la^{-\ell}(\frac{a}{\rho_1h_1}+i\la)f_1,\label{new-pol1}\\[0.1in]
\la^2y^1+(\rho_1I_1)^{-1}\left[E_1I_1y^1_{xx}+\frac{h_1}{2}\tau-G_1h_1(\omega_x+y^1)\right]&=&-\la^{-\ell}f_4-i\la^{-\ell+1}f_3,\label{new-pol4}\\[0.1in]
\hspace{0.75cm}\la^2\omega+(\rho h)^{-1}\left[-EI\omega_{xxxx}+G_h\omega_{xx}+G_1h_1y^1_x+G_3h_3y^3_x+h_2\tau_x-i\la c\omega\right]&=&-\la^{-\ell}f_6-\la^{-\ell}(c+i\la)f_5,\label{new-pol3}\\[0.1in]
\la^2u^3+(\rho_3 h_3)^{-1}\left[E_3h_3u^3_{xx}-\tau\right]&=&-\la^{-\ell}f_8-i\la^{-\ell+1}f_7,\label{new-pol2}\\[0.1in]
\la^2y^3+(\rho_3I_3)^{-1}\left[E_3I_3y_{xx}^3+\frac{h_3}{2}\tau-G_3h_3(\omega_x+y^3)\right]&=&-\la^{-\ell}f_{10}-i\la^{-\ell+1}f_9,\label{new-pol5}
\end{eqnarray}
where $G_h=G_1h_1+G_3h_3$. For clarity, we divide the proof into several Lemmas.
\begin{Lemma}\label{2C2-diss}
Assume that \eqref{A2} holds and (\eqref{H3}, or \eqref{H4} or \eqref{H5} holds). The solution $U\in D(\mathcal{A})$ of system \eqref{new-pol1}-\eqref{new-pol5} satisfies the following asymptotic behavior estimations 
\begin{equation}\label{2Case2-diss}
\int_0^L\abs{\psi}^2dx,\ \int_0^L\abs{v^1}^2dx=\frac{o(1)}{\la^{\ell}}\quad \text{and}\quad \int_0^L\abs{\omega}^2dx,\int_0^L\abs{u^1}^2dx=\frac{o(1)}{\la^{\ell+2}}.
\end{equation}
\end{Lemma}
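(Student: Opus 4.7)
The plan is to mimic exactly the strategy used in Lemma \ref{dissipation}: extract information from the dissipation identity, then pass from velocity estimates to position estimates via the kinematic relations \eqref{pol1} and \eqref{pol5}. The key difference is that under hypothesis \eqref{A2} the active dampings are $a$ (on $v^1$) and $c$ (on $\psi$), rather than $a$ and $b$ as in Case 1 of the previous subsection, so the dissipation identity will feed us velocity bounds on $v^1$ and $\psi$ instead of $v^1$ and $z^1$.

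First I would take the inner product of $\lambda^{\ell}(i\lambda I-\mathcal{A})U_n = F_n$ with $U_n$ in $\mathcal{H}$. Since $U_n$ is uniformly bounded in $\mathcal{H}$ and $F_n\to 0$ in $\mathcal{H}$, taking real parts and using the dissipation formula for $\mathcal{A}$ (with only $a,c>0$) yields
\[
a\int_0^L\abs{v^1}^2\,dx+c\int_0^L\abs{\psi}^2\,dx=\Re\bigl\langle(i\lambda I-\mathcal{A})U,U\bigr\rangle_{\mathcal{H}}=o(\lambda^{-\ell}).
\]
Since $a,c>0$, this immediately gives the first two asymptotic estimates in \eqref{2Case2-diss}.

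Next, to recover the estimates on $u^1$ and $\omega$, I would use the first-order relations \eqref{pol1} and \eqref{pol5}, namely $i\lambda u^1=v^1+\lambda^{-\ell}f_1$ and $i\lambda\omega=\psi+\lambda^{-\ell}f_5$. Taking $L^2$ norms, applying the elementary inequality $\abs{p+q}^2\le 2\abs{p}^2+2\abs{q}^2$, and using the fact that $f_1\to 0$ in $H_0^1(0,L)$ and $f_5\to 0$ in $H_0^2(0,L)$ (hence in $L^2$), we obtain
\[
\int_0^L\abs{u^1}^2\,dx\le\frac{2}{\lambda^2}\int_0^L\abs{v^1}^2\,dx+\frac{2}{\lambda^{2+2\ell}}\int_0^L\abs{f_1}^2\,dx=\frac{o(1)}{\lambda^{\ell+2}},
\]
and exactly the same manipulation applied to \eqref{pol5} gives the corresponding bound on $\int_0^L|\omega|^2\,dx$.

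There is essentially no obstacle here: the proof is a direct transcription of Lemma \ref{dissipation} with the roles of the $b$-damped component swapped for the $c$-damped one, and it does not use the particular hypothesis among \eqref{H3}, \eqref{H4}, \eqref{H5} beyond the fact that $\ell\ge 2$ (which ensures the remainder terms involving $f_1,f_5$ are genuinely $o(1)$ after division by $\lambda^{\ell+2}$). The genuinely delicate estimates distinguishing the three sub-cases will appear only in the subsequent lemmas of this subsection, not in this dissipation lemma.
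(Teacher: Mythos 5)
Your proposal is correct and follows exactly the same route as the paper's proof: the dissipation identity $a\|v^1\|^2+c\|\psi\|^2=\Re\langle(i\lambda I-\mathcal{A})U,U\rangle_{\mathcal{H}}=o(\lambda^{-\ell})$ gives the velocity estimates, and the kinematic relations \eqref{pol1} and \eqref{pol5} together with $f_1\to 0$ in $H_0^1(0,L)$ and $f_5\to 0$ in $H_0^2(0,L)$ give the bounds on $u^1$ and $\omega$. Your remark that the specific choice among \eqref{H3}--\eqref{H5} enters only through the value of $\ell$ is also consistent with how the paper uses this lemma.
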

\begin{proof}
Taking the inner product of $F$ with $U$ in $\mathcal{H}$, then using \eqref{1pol1} and the fact that $U$ is uniformly bounded in $\mathcal{H}$, we get 
\begin{equation}\label{C2-L1-est2}
a\int_0^L\abs{v^1}^2dx+c\int_0^L\abs{\psi}^2dx=-\Re\left(\left<\mathcal{A}U,U\right>_{\mathcal{H}}\right)=\Re\left(\left<(i\la I-\mathcal{A})U,U\right>_{\mathcal{H}}\right)=o\left(\la^{-\ell}\right).
\end{equation}
Then, we obtain the first two estimations in \eqref{2Case2-diss}. Now, using \eqref{pol1} ,\eqref{pol5} and the fact that $f_1,f_5\to 0$ in $H^1_0(0,L)$ and $H^2_0(0,L)$, respectively, we get the third and the fourth estimations of \eqref{2Case2-diss}. The proof has been completed. 
\end{proof}
\begin{Lemma}\label{C2-L2-est}
For all $\ell\geq2$, the solution $U\in D(\mathcal{A})$  of system \eqref{new-pol1}-\eqref{new-pol5} satisfies the following asymptotic behavior estimations
\begin{equation}\label{2C2-L2-est1}
\int_0^L\abs{\omega_x}^2dx,\ \int_0^L\abs{\omega_{xx}}^2dx=\frac{o(1)}{\la^{\min({\ell},\frac{\ell}{2}+1)}}=\left\{\begin{array}{lll}\displaystyle{\frac{o(1)}{\la^{2}}}&\text{if}&\eqref{H3}\ \text{holds}\\[0.1in] \displaystyle{\frac{o(1)}{\la^{4}}}&\text{if}&(\eqref{H4}\ \text{or}\ \eqref{H5})\ \text{holds}.\end{array} \right.
\end{equation}
and 
\begin{equation}\label{2C2-L2-est1-2}
\int_0^L\abs{u^1_x}^2dx=\frac{o(1)}{\la^{\min\left(\frac{\ell}{2},\frac{\ell}{4}+1\right)}}=\left\{\begin{array}{lll}\displaystyle{\frac{o(1)}{\la^{2}}}&\text{if}&\eqref{H3}\ \text{holds}\\[0.1in] \displaystyle{\frac{o(1)}{\la^{4}}}&\text{if}&(\eqref{H4}\ \text{or}\ \eqref{H5})\ \text{holds}.\end{array} \right.
\end{equation}
\end{Lemma}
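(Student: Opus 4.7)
My plan is to derive each of the three estimates by testing the relevant equation against its own unknown, integrating by parts using the homogeneous boundary conditions, and exploiting the smallness provided by Lemma \ref{2C2-diss}.

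First, for the beam component I would multiply \eqref{new-pol3} by $\overline{\omega}$ and integrate over $(0,L)$. Two integrations by parts, legitimate because of the clamped boundary conditions $\omega(0)=\omega(L)=\omega_x(0)=\omega_x(L)=0$, convert $\int\omega_{xxxx}\overline{\omega}\,dx$ into $\|\omega_{xx}\|^2$ and $\int G_h\omega_{xx}\overline{\omega}\,dx$ into $-G_h\|\omega_x\|^2$. Lemma \ref{2C2-diss} supplies $\la^2\|\omega\|^2=o(1)/\la^{\ell}$ and $\la\|\omega\|^2=o(1)/\la^{\ell+1}$, so the inertial and damping contributions are tiny. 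The coupling contributions $G_1h_1\int y^1_x\overline{\omega}\,dx$, $G_3h_3\int y^3_x\overline{\omega}\,dx$ and $h_2\int\tau_x\overline{\omega}\,dx$ are then handled by Cauchy--Schwarz after transferring the derivative in $\tau_x$ onto $\overline{\omega}$ by integration by parts; using that $\|y^1_x\|,\|y^3_x\|$ and $\|\tau\|$ are $O(1)$ while $\|\omega\|=o(1)/\la^{\ell/2+1}$, the two competing rates $o(1)/\la^{\ell}$ and $o(1)/\la^{\ell/2+1}$ combine to yield the stated exponent $\min(\ell,\ell/2+1)$ on both $\|\omega_x\|^2$ and $\|\omega_{xx}\|^2$.

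Second, for the top-face longitudinal displacement I would multiply \eqref{new-pol1} by $\overline{u^1}$ and integrate by parts; the Dirichlet condition $u^1(0)=u^1(L)=0$ kills the boundary terms, producing
\[
\la^2\|u^1\|^2-\frac{E_1}{\rho_1}\|u^1_x\|^2+\frac{1}{\rho_1h_1}\int_0^L\tau\,\overline{u^1}\,dx-i\la\,\frac{a}{\rho_1h_1}\|u^1\|^2=\text{(forcing)},
\]
where the forcing is $o(1)/\la^{\ell}$. Lemma \ref{2C2-diss} delivers $\la^2\|u^1\|^2=o(1)/\la^{\ell}$ and $\la\|u^1\|^2=o(1)/\la^{\ell+1}$. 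The delicate ingredient is $\int\tau\,\overline{u^1}\,dx$, which I would bound by $\|\tau\|\,\|u^1\|$; using $\tau=-u^1+u^3+h_2\omega_x-\tfrac{h_1}{2}y^1-\tfrac{h_3}{2}y^3$ together with the uniform bounds $\la u^3,\la y^1,\la y^3=O(1)$ in $L^2$ and the $\|\omega_x\|$ estimate already obtained, this cross term enjoys the decay needed to isolate $\|u^1_x\|^2$ and conclude.

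The main obstacle is to avoid losing powers of $\la$ when controlling the coupling $h_2\int\tau_x\overline{\omega}\,dx$ in the first step, since a direct Cauchy--Schwarz bound using $\|\tau_x\|$ would be wasteful (one cannot afford a factor of $\la$ coming from differentiating the wave displacements). The fix is to integrate by parts once more so that the derivative lands on $\overline{\omega}$, producing $-h_2\int\tau\,\overline{\omega}_x\,dx$, and then invoke the Gagliardo--Nirenberg interpolation $\|\omega_x\|\le C\|\omega_{xx}\|^{1/2}\|\omega\|^{1/2}+C\|\omega\|$ combined with the energy bound $\|\omega_{xx}\|=O(1)$, yielding a preliminary $\|\omega_x\|$ estimate that feeds back into the multiplier identity in a bootstrap. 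Once this subtlety is dispatched, the remaining steps are routine applications of the multiplier method and lead to the two dichotomous cases $\ell=2$ under \eqref{H3} and $\ell=6$ under \eqref{H4} or \eqref{H5}.
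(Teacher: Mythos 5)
Your overall strategy (test \eqref{new-pol3} against $\overline{\omega}$ and \eqref{new-pol1} against $\overline{u^1}$, integrate by parts, and feed in Lemma \ref{2C2-diss}) is exactly the paper's, and your treatment of the second estimate and of the couplings $G_1h_1\int y^1_x\overline{\omega}\,dx$, $G_3h_3\int y^3_x\overline{\omega}\,dx$ is fine. The gap is in your handling of $h_2\int\tau_x\overline{\omega}\,dx$. Your premise that a direct Cauchy--Schwarz with $\|\tau_x\|$ ``cannot afford a factor of $\la$ coming from differentiating the wave displacements'' is mistaken: $\tau_x=-u^1_x+u^3_x+h_2\omega_{xx}-\tfrac{h_1}{2}y^1_x-\tfrac{h_3}{2}y^3_x$ involves only quantities controlled by the energy norm $\|U\|_{\mathcal H}=1$, so $\|\tau_x\|=O(1)$ and the direct bound $\bigl|\int\tau_x\overline{\omega}\,dx\bigr|\le\|\tau_x\|\,\|\omega\|=o(1)/\la^{\ell/2+1}$ already gives the claimed exponent. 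This is what the paper does.

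Your workaround --- moving the derivative onto $\overline{\omega}$ and estimating $\bigl|\int\tau\,\overline{\omega}_x\,dx\bigr|\le\|\tau\|\,\|\omega_x\|$ --- actually loses a power and does not close in the $\ell=6$ cases. The best available bounds are $\|\tau\|=O(\la^{-1})$ (dominated by $u^3,y^1,y^3$, which are only $O(\la^{-1})$ in $L^2$) and, from Gagliardo--Nirenberg, $\|\omega_x\|=o(\la^{-\ell/4-1/2})$, giving $o(\la^{-\ell/4-3/2})$ for this term: that is $o(\la^{-2})$ when $\ell=2$ (enough) but only $o(\la^{-3})$ when $\ell=6$, short of the required $o(\la^{-4})$ for $\|\omega_{xx}\|^2$. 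The bootstrap you invoke cannot rescue this: writing $\|\omega_x\|=o(\la^{-\alpha})$, one iteration of your multiplier identity returns $\|\omega_x\|^2=o\bigl(\la^{-\min(\ell,\,\ell/2+1,\,\alpha+1)}\bigr)$, i.e.\ the map $\alpha\mapsto\tfrac12\min(\ell,\ell/2+1,\alpha+1)$, whose iterates decrease toward the fixed point $\alpha=1$; each pass is weaker than the last, so the scheme never reaches exponent $4$. Replace this step with the paper's direct estimate $\|\tau_x\|\,\|\omega\|$ and the proof goes through as claimed.
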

\begin{proof}
First, we will prove the  estimations \eqref{2C2-L2-est1}. For this aim, multiplying \eqref{new-pol3} by $-\overline{\omega}$, integrating by parts on $(0,L)$ and using \eqref{2Case2-diss} and the fact that $y^1_x,y^3_x$ and $\tau_x$ are uniformly bounded in $L^2(0,L)$, and $f_5, f_6\to 0$ in $H^2_0(0,L)$ and $L^2(0,L)$ respectively and $\ell\geq 2$, we get 
\begin{equation*}
\frac{EI}{\rho h}\int_0^L\abs{\omega_{xx}}^2dx+\frac{G_h}{\rho h}\int_0^L\abs{\omega_x}^2dx=\frac{o(1)}{\la^{\min({\ell},\frac{\ell}{2}+1)}}.
\end{equation*}
The above equation yields  the   equation \eqref{2C2-L2-est1} in the both cases \eqref{H3} and \eqref{H4}. Next, we will give the estimation \eqref{2C2-L2-est1-2}. For this aim, multiplying \eqref{new-pol1} by $-\overline{u^1}$, integrating by parts on $(0,L)$ and using \eqref{2Case2-diss} and the fact that $f_1,f_2\to 0$ respectively in $H_0^1(0,L)$ and $L^2(0,L)$, we get
\begin{equation}\label{C2-L2-est2}
\frac{E_1}{\rho_1}\int_0^L\abs{u^1_x}^2dx-\int_0^L\tau \overline{u^1}dx=\frac{o(1)}{\la^{\ell}}. 
\end{equation}
Using the fact that $\la u^3, \la y^1$, $\la y^3$ are uniformly bounded in $L^2(0,L)$,  and the definition of \eqref{tau}, we get 
\begin{equation*}
\left|\int_0^L\tau u^1dx\right|=\frac{o(1)}{\la^{\frac{\ell}{2}+2}}.
\end{equation*}
Inserting the above equation in \eqref{C2-L2-est2} and using \eqref{H3}, \eqref{H4}, \eqref{H5}, we get the desired estimation in \eqref{2C2-L2-est1-2}. The proof has been completed. 
\end{proof}
\begin{Lemma}\label{C2-L3-est}
The solution $U\in D(\mathcal{A})$  of system \eqref{new-pol1}-\eqref{new-pol5} satisfies the following asymptotic behavior estimations 
\begin{equation}\label{C2-L3-est1}
\left\{\begin{array}{l}
\displaystyle
\la^2\frac{\rho_1h_1\rho_3}{E_3}\left(\frac{E_1}{\rho_1}-\frac{E_3}{\rho_3}\right)\Re\left(\int_0^Lu_x^3\overline{u^1_x}dx\right)-\int_0^L\abs{u_x^3}^2dx\\[0.1in]
\displaystyle
\frac{h_1}{2}\Re\left(\int_0^Ly^1_x\overline{u^3_x}dx\right)+\frac{h_3}{2}\Re\left(\int_0^Ly_x^3\overline{u_x^3}dx\right)=\frac{o(1)}{\la^{\min\left(\ell-2,\frac{\ell}{2}-1\right)}},
\end{array}
\right.
\end{equation}
\begin{equation}\label{C2-L3-est2}
\left\{\begin{array}{l}
\displaystyle
\la^2\frac{\rho_1h_1\rho_3}{E_3}\left(\frac{E_1}{\rho_1}-\frac{E_3}{\rho_3}\right)\Re\left(\int_0^Lu_x^1\overline{y^3_x}dx\right)+\frac{h_3}{2}\int_0^L\abs{y_x^3}^2dx\\[0.1in]
\displaystyle
-\Re\left(\int_0^Lu_x^3\overline{y_x^3}dx\right)+\frac{h_1}{2}\Re\left(\int_0^Ly^1_x\overline{y^3_x}dx\right)=\frac{o(1)}{\la^{\min\left(\ell-2,\frac{\ell}{2}-1\right)}},
\end{array}
\right.
\end{equation}
\begin{equation}\label{C2-L3-est3}
\left\{\begin{array}{l}
\displaystyle
-\frac{1}{2}\int_0^L\abs{y_x^3}^2dx+\frac{h_3}{2I_3}\int_0^L\abs{u_x^3}^2dx-\frac{h_3h_1}{4I_3}\Re\left(\int_0^Ly^1_x\overline{u^3_x}dx\right)\\[0.1in]
\displaystyle
-\frac{h_1}{2h_3}\Re\left(\int_0^Ly^1_x\overline{y^3_x}dx\right)+\left(\frac{1}{h_3}-\frac{h_3^2}{4I_3}-\frac{G_3h_3}{I_3}\right)\Re\left(\int_0^Ly_x^3\overline{u^3_x}dx\right)=\frac{o(1)}{\la^{\min\left(\ell-2,\frac{\ell}{2}-1\right)}},
\end{array}
\right.
\end{equation}
\begin{equation}\label{u1y1}
\frac{h_1}{2}\int_0^L\abs{y^1_x}^2dx+\frac{h_3}{2}\Re\left(\int_0^Ly^3_x\overline{y^1_x}dx\right)-\Re\left(\int_0^Lu^3_x\overline{y^1_x}dx\right)=\frac{o(1)}{\la^{\frac{\ell}{2}-1}}.
\end{equation}
Moreover, if $\frac{E_1}{\rho_1}=\frac{E_3}{\rho_3}$, we have 
\begin{equation}\label{u3y1}
\left\{\begin{array}{l}
\displaystyle
\frac{h_1}{2\rho_1I_1}\int_0^L\abs{u_x^3}^2dx-\frac{h_1}{2\rho_3h_3}\int_0^L\abs{y^1_x}^2dx-\frac{1}{2\rho_3}\Re\left(\int_0^Ly^3_x\overline{y^1_x}dx\right)\\[0.1in]
\displaystyle
-\frac{h_1h_3}{4\rho_1I_1}\left(\int_0^Ly^3_x\overline{u^3_x}dx\right)+\left(\frac{1}{\rho_3h_3}-\frac{h_1^2}{4\rho_1I_1}-\frac{G_1h_1}{\rho_1I_1}\right)\Re\left(\int_0^Ly^1_x\overline{u^3_x}dx\right)=\frac{o(1)}{\la^{\min{(\ell-2,\frac{\ell}{4}+\frac{1}{2})}}}.
\end{array}
\right.
\end{equation}
\begin{equation}\label{y1y3}
\left\{\begin{array}{l}
\displaystyle
\frac{h_1h_3}{4I_1\rho_1}\int_0^L\abs{y^3_x}^2dx-\frac{h_3h_1}{4\rho_3I_3}\int_0^L\abs{y^1_x}^2dx-\frac{h_1}{2\rho_1I_1}\Re\left(\int_0^Lu^3_x\overline{y^3_x}dx\right)+\frac{h_3}{2\rho_3I_3}\Re\left(\int_0^Lu^3_x\overline{y^1_x}dx\right)\\[0.1in]
\displaystyle
\left(\frac{h_1^2}{4\rho_1I_1}+\frac{G_1h_1}{\rho_1I_1}-\left(\frac{h_3^2}{4\rho_3I_3}+\frac{G_3h_3}{\rho_3I_3}\right)\right)\Re\left(\int_0^Ly^1_x\overline{y^3_x}dx\right)=\frac{o(1)}{\la^{\min{(\ell-2,\frac{\ell}{4}+\frac{1}{2})}}}.
\end{array}
\right.
\end{equation}
\end{Lemma}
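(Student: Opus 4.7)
The plan is to reproduce the cross-multiplier strategy developed in Lemma \ref{L5} and adapt it to case \eqref{A2}, where the dissipation has moved from $y^1$ to $\omega$. For each of the six identities, I pair two of the resolvent equations \eqref{new-pol1}--\eqref{new-pol5}, test each against a suitably weighted second-derivative multiplier, integrate by parts on $(0,L)$, convert the $i\la v^j, i\la z^j$ terms into $\la^2\int u^j_x\overline{u^k_x}$-type quantities via \eqref{pol1}, \eqref{pol3}, \eqref{pol7}, \eqref{pol9}, substitute the expression \eqref{tau} for $\tau$ so that the coupling integrals become linear combinations of $L^2$-norms of the displacement gradients, and finally subtract the two identities and take real parts so that the symmetric $\int u^j_{xx}\overline{u^k_{xx}}$-terms cancel. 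The remainders are then controlled by the dissipation bounds of Lemma \ref{2C2-diss}, the estimates on $\|\omega_{xx}\|$, $\|\omega_x\|$, $\|u^1_x\|$ of Lemma \ref{C2-L2-est}, the bound $\|\tau\|=O(\la^{-1/2})$ obtained exactly as in Lemma \ref{dissipation}, and the uniform boundedness of $\la u^j, \la y^j$ in $L^2(0,L)$.

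For the first three identities the pairings mirror those of Lemma \ref{L5}: for \eqref{C2-L3-est1} I test \eqref{new-pol1} against $\tfrac{E_3}{\rho_3}\overline{u^3_{xx}}$ and \eqref{new-pol2} against $\tfrac{E_1}{\rho_1}\overline{u^1_{xx}}$ and subtract; for \eqref{C2-L3-est2} I test \eqref{new-pol1} against $\tfrac{h_3}{2}\tfrac{E_3}{\rho_3}\overline{y^3_{xx}}$ and \eqref{new-pol5} against $\tfrac{h_3}{2}\tfrac{E_1}{\rho_1}\overline{u^1_{xx}}$ and subtract; for \eqref{C2-L3-est3} I test \eqref{new-pol2} against $\overline{y^3_{xx}}$ and \eqref{new-pol5} against $\overline{u^3_{xx}}$ and subtract. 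Compared with Lemma \ref{L5} the only structural change is that the $av^1$ contribution is absorbed using the sharper bound $\|v^1\|=o(\la^{-\ell/2})$, while the terms involving $b, d, e$ vanish identically. For the new $y^1$-identities \eqref{u1y1}, \eqref{u3y1}, \eqref{y1y3}, which have no analogue in the \eqref{A1} subsection because $y^1$ was previously damped, I would first test \eqref{new-pol4} against $-\overline{y^1}$ to obtain \eqref{u1y1}, the kinetic term $\la^2\int|y^1|^2$ being negligible via \eqref{pol7}, and then pair \eqref{new-pol4} with \eqref{new-pol2} (against $\overline{u^3_{xx}}$ and $\overline{y^1_{xx}}$ respectively) to obtain \eqref{u3y1}, and \eqref{new-pol4} with \eqref{new-pol5} (against $\overline{y^3_{xx}}$ and $\overline{y^1_{xx}}$ respectively) to obtain \eqref{y1y3}. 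The hypothesis $\tfrac{E_1}{\rho_1}=\tfrac{E_3}{\rho_3}$ assumed only for these last two identities is exactly what makes the symmetric $\la^2 \int y^1_{xx}\overline{\,\cdot\,}$-terms cancel upon subtraction, leaving only lower-order material.

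The main obstacle will be the bookkeeping required to land every remainder at its precise declared order---$\tfrac{o(1)}{\la^{\min(\ell-2,\ell/2-1)}}$ for the first three identities, $\tfrac{o(1)}{\la^{\ell/2-1}}$ for \eqref{u1y1}, and $\tfrac{o(1)}{\la^{\min(\ell-2,\ell/4+1/2)}}$ for the last two---rather than settling for weaker rates that would not suffice for the subsequent argument. In particular, the boundary contributions generated when integrating $\overline{u^j_{xx}}$ and $\overline{y^j_{xx}}$ by parts against second derivatives must be shown to be negligible at these orders, which as in Lemma \ref{L3-est} will require pointwise Gagliardo--Nirenberg estimates for $u^j_x, y^j_x$ at $\zeta\in\{0,L\}$, together with an $H^4$-type bound on $\omega$ read off from \eqref{new-pol3}. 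Tracking carefully which of the three sub-hypotheses \eqref{H3}, \eqref{H4}, \eqref{H5} is effectively invoked for each estimate---and ensuring that the identities carrying the factor $\tfrac{E_1}{\rho_1}-\tfrac{E_3}{\rho_3}$ are used only in combinations where this factor is eventually divided out---is the delicate part of the proof.
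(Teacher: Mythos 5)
Most of your plan coincides with the paper's proof: for \eqref{C2-L3-est1}--\eqref{C2-L3-est3} the paper pairs \eqref{new-pol1} with \eqref{new-pol2}, \eqref{new-pol1} with \eqref{new-pol5}, and \eqref{new-pol2} with \eqref{new-pol5} against exactly the weighted second-derivative multipliers you list and then reuses Steps 1--3 of Lemma \ref{L5}; and for \eqref{u3y1} and \eqref{y1y3} it pairs \eqref{new-pol2} with \eqref{new-pol4} and \eqref{new-pol4} with \eqref{new-pol5} against $\overline{y^1_{xx}}$, $\overline{u^3_{xx}}$ and $\overline{y^3_{xx}}$, $\overline{y^1_{xx}}$ respectively, with $\tfrac{E_1}{\rho_1}=\tfrac{E_3}{\rho_3}$ doing precisely the cancellation you describe. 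The genuine gap is in your derivation of \eqref{u1y1}. Testing \eqref{new-pol4} against $-\overline{y^1}$ produces, after integration by parts, an identity of the form $-\|\la y^1\|^2+\tfrac{E_1}{\rho_1}\|y^1_x\|^2-\tfrac{h_1}{2\rho_1 I_1}\int_0^L\tau\overline{y^1}\,dx+\dots=o(\cdot)$, in which the coupling appears only through the \emph{zero-order} quantity $\int_0^L\tau\overline{y^1}\,dx$; this can never generate the gradient cross-terms $\Re\int_0^L y^3_x\overline{y^1_x}\,dx$ and $\Re\int_0^L u^3_x\overline{y^1_x}\,dx$ that constitute \eqref{u1y1}. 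Moreover your claim that the kinetic term $\la^2\int_0^L|y^1|^2dx$ is negligible via \eqref{pol7} is false under \eqref{A2}: since $b=0$ here, $z^1$ is not controlled by the dissipation, Lemma \ref{2C2-diss} gives smallness only of $v^1$ and $\psi$, and \eqref{pol7} merely says $\la y^1=-iz^1+O(\la^{-\ell})$ with $\|z^1\|=O(1)$; so $\|\la y^1\|^2$ is bounded but not $o(1)$. What your multiplier actually yields is the energy identity $\|\la y^1\|^2-\tfrac{E_1}{\rho_1}\|y^1_x\|^2=o(1)$, i.e.\ the second line of \eqref{C2-L4-est9}, which serves a different purpose later.

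The correct route (the paper's Step 2) is a two-equation pairing of the same type as the others: multiply \eqref{new-pol1} by $\overline{y^1_{xx}}$ and \eqref{new-pol4} by $\overline{u^1_{xx}}$, integrate by parts, subtract and take real parts so that $\la^2\Re\int_0^L u^1_x\overline{y^1_x}\,dx$ and $\tfrac{E_1}{\rho_1}\Re\int_0^L u^1_{xx}\overline{y^1_{xx}}\,dx$ cancel; the terms $\int_0^L\tau_x\overline{u^1_x}\,dx$ and $\int_0^L(\omega_x+y^1)\overline{u^1_x}\,dx$ coming from \eqref{new-pol4} are small by Lemma \ref{C2-L2-est}, and what survives is $-\Re\int_0^L\tau_x\overline{y^1_x}\,dx=o(1)/\la^{\ell/2-1}$. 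Expanding $\tau_x$ via \eqref{tau} and discarding the $u^1_x$ and $\omega_{xx}$ contributions (again by Lemma \ref{C2-L2-est}) gives exactly \eqref{u1y1}. Since \eqref{u1y1} is an essential input to Lemmas \ref{C2-L4-est} and \ref{C2-L6-est}, this step must be repaired; the rest of your outline is sound.
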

\begin{proof} The proof of this Lemma is divided into four steps.\\ 
\textbf{Step 1.} First, multiplying \eqref{new-pol1} and \eqref{new-pol2}  by $\frac{E_3}{\rho_3}\overline{u^3_{xx}}$ and $\frac{E_1}{\rho_1}\overline{u^1_{xx}}$ respectively,  \eqref{new-pol1} and \eqref{new-pol5} by $\frac{E_3}{\rho_3}\overline{y^3_{xx}}$ and $\frac{E_1}{\rho_1}\overline{u^1_{xx}}$  respectively, \eqref{new-pol2} and \eqref{new-pol5}  $\overline{y^3_{xx}}$ and  $\overline{u^3_{xx}}$ respectively, and then using the same arguments in Step 1, Step 2 and Step 3 in Lemma \ref{L5}, we obtain \eqref{C2-L3-est1}-\eqref{C2-L3-est3}.\\
\textbf{Step 2.} In this step, we prove estimation \eqref{u1y1}. For this aim, multiplying \eqref{new-pol1} and \eqref{new-pol4} respectively  by $\overline{y^1_{xx}}$ and $\overline{u^1_{xx}}$, integrating over $(0,L)$ and using Lemmas \ref{2C2-diss}, \ref{C2-L2-est}, and the fact thats $\la u^1_{xx}$ and $\la y^1_{xx}$ are uniformly bounded in $L^2(0,L)$ and $f_1,f_3\to 0$ in $H_0^1(0,L)$ and $f_2,f_8\to 0$ in $L^2(0,L)$, we get 
\begin{equation}\label{step2-eq1}
-\la^2\int_0^Lu^1_x\overline{y^1_x}dx+\frac{E_1}{\rho_1}\int_0^Lu^1_{xx}\overline{y^1_{xx}}dx-\frac{1}{\rho_1h_1}\int_0^L\tau_x \overline{y^1_x}dx=\frac{o(1)}{\la^{\frac{\ell}{2}-1}}.
\end{equation}
\begin{equation}\label{step2-eq2}
-\la^2\int_0^Ly^1_x\overline{u^1_x}dx+\frac{E_1}{\rho_1}\int_0^Ly^1_{xx}\overline{u^1_{xx}}dx-\frac{h_1}{2\rho_1I_1}\int_0^L\tau_x\overline{u^1_{x}}dx+\frac{G_1h_1}{\rho_1I_1}\int_0^L(\omega_x+y^1)\overline{u^1_{x}}dx=\frac{o(1)}{\la^{\ell-2}}.
\end{equation}
Using the fact that $\tau_x$ is uniformly bounded in $L^2(0,L)$, $\la y^1$ is uniformly bounded in $L^2(0,L)$ and Lemma \ref{2C2-diss}, \ref{C2-L2-est}, we get 
\begin{equation}\label{step2-eq3}
\left|\int_0^L\tau_x\overline{u^1_{x}}dx\right|=\frac{o(1)}{\la^{\frac{\ell}{2}}}\quad \text{and}\quad \left|\int_0^L(\omega_x+y^1)\overline{u^1_x}dx\right|={\frac{o(1)}{\la^{\min\left(\frac{\ell}{2},\frac{\ell}{4}+1\right)}}}.
\end{equation}
Inserting  \eqref{step2-eq3} in \eqref{step2-eq2} and subtract  \eqref{step2-eq1} and  \eqref{step2-eq2} and taking the real part, we get 
\begin{equation}
-\Re\left(\int_0^L\tau_x\overline{y^1_x}dx\right)=\frac{o(1)}{\la^{\frac{\ell}{2}-1}}.
\end{equation}
Using the definition of $\tau$ in the above estimation, we get the desired result \eqref{u1y1}.\\
\textbf{Step 3.} In this step, we prove estimation \eqref{u3y1}. For this aim, multiplying \eqref{new-pol2} and \eqref{new-pol4}  by $\overline{y^1_{xx}}$ and $\overline{u^3_{xx}}$ respectively, integrating over $(0,L)$ and using the fact that $\frac{1}{\la}u^{3}_{xx}, \frac{1}{\la}y^1_{xx}$ are uniformly bounded in $L^2(0,L)$, we get 
\begin{equation}\label{step3-eq1}
-\la^2\int_0^Lu^3_x\overline{y^1_x}dx+\frac{E_3}{\rho_3}\int_0^Lu^3_{xx}\overline{y^3_{xx}}dx+\frac{1}{\rho_3h_3}\int_0^L\tau_x\overline{y^1_x}dx=\frac{o(1)}{\la^{\ell-2}},
\end{equation}
\begin{equation}\label{step3-eq2}
-\la^2\int_0^Ly^1_x\overline{u^3_x}dx+\frac{E_1}{\rho_1}\int_0^Ly^1_{xx}\overline{u^3_{xx}}-\frac{h_1}{2\rho_1I_1}\int_0^L\tau_x\overline{u^3_x}dx+\frac{G_1h_1}{\rho_1I_1}\int_0^Ly^1_x\overline{u^3_x}dx=\frac{o(1)}{\la^{\min{(\ell-2,\frac{\ell}{4}+\frac{1}{2})}}}. 
\end{equation}
Using the definition of $\tau$, we get 
\begin{equation}\label{step3-eq3}
\frac{1}{\rho_3h_3}\int_0^L\tau_x\overline{y^1_{x}}dx=\frac{1}{\rho_3h_3}\int_0^Lu^3_x\overline{y^1_x}dx-\frac{h_1}{2\rho_3h_3}\int_0^L\abs{y^1_x}^2dx-\frac{1}{2\rho_3}\int_0^Ly^3_x\overline{y^1_x}dx+\frac{o(1)}{\la^{\frac{\ell}{4}+\frac{1}{2}}},
\end{equation}
\begin{equation}\label{step3-eq4}
-\frac{h_1}{2\rho_1I_1}\int_0^L\tau_x\overline{u^3_x}dx=-\frac{h_1}{2\rho_1I_1}\int_0^L\abs{u^3_x}^2dx+\frac{h_1^2}{4\rho_1I_1}\int_0^Ly^1_x\overline{u^3_x}dx+\frac{h_3h_1}{4\rho_1I_1}\int_0^Ly^3_x\overline{u^3_x}dx+\frac{o(1)}{\la^{\frac{\ell}{4}+\frac{1}{2}}}.
\end{equation}
Inserting \eqref{step3-eq3} in \eqref{step3-eq1} and \eqref{step3-eq4} in \eqref{step3-eq2}, { and  taking the real part their difference}, then using the fact that $\frac{E_1}{\rho_1}=\frac{E_3}{\rho_3}$, we get \eqref{u3y1}.\\
\textbf{Step 4.} In this step, we prove estimation \eqref{y1y3}. For this aim, multiplying \eqref{new-pol4} and \eqref{new-pol5} by $\overline{y^3_{xx}}$ and $\overline{y^1_{xx}}$ respectively, integrating over $(0,L)$, and using the fact that $\frac{1}{\la}y^1_{xx},\ \frac{1}{\la}y^3_{xx}$ are uniformly bounded in $L^2(0,L)$, we get 
\begin{equation}\label{step4-eq1}
-\la^2\int_0^Ly^1_x\overline{y^3_{xx}}dx+\frac{E_1}{\rho_1}\int_0^Ly^1_{xx}\overline{y^3_{xx}}dx-\frac{h_1}{2\rho_1I_1}\int_0^L\tau_x\overline{y^3_x}dx+\frac{G_1h_1}{\rho_1I_1}\int_0^Ly^1_x\overline{y^3_x}dx=\frac{o(1)}{\la^{\min{(\ell-2,\frac{\ell}{4}+\frac{1}{2})}}},
\end{equation}
\begin{equation}\label{step4-eq2}
-\la^2\int_0^Ly^3_x\overline{y^1_x}dx+\frac{E_3}{\rho_3}\int_0^Ly^3_{xx}\overline{y^1_{xx}}dx-\frac{h_3}{2\rho_3I_3}\int_0^L\tau_x\overline{y^1_x}dx+\frac{G_3h_3}{\rho_3I_3}\int_0^Ly^3_x\overline{y^1_x}dx=\frac{o(1)}{\la^{\min{(\ell-2,\frac{\ell}{4}+\frac{1}{2})}}}.
\end{equation}
Using the definition of $\tau$ and Lemma \ref{2C2-diss}, \ref{C2-L2-est} and the fact that $y^3_x$ and $y^1_x$ are uniformly bounded in $L^2(0,L)$, we get 
\begin{equation}\label{step4-eq3}
-\frac{h_1}{2\rho_1I_1}\int_0^L\tau_x\overline{y^3_x}dx=-\frac{h_1}{2\rho_1I_1}\int_0^Lu^3_x\overline{y^3_x}dx+\frac{h_1^2}{4\rho_1I_1}\int_0^Ly^1_x\overline{y^3_x}dx+\frac{h_1h_3}{4\rho_1I_1}\int_0^L\abs{y^3_x}+\frac{o(1)}{\la^{\frac{\ell}{4}+\frac{1}{2}}}dx,
\end{equation}
\begin{equation}\label{step4-eq4}
-\frac{h_3}{2\rho_3I_3}\int_0^L\tau_x\overline{y^1_x}dx=-\frac{h_3}{2\rho_3I_3}\int_0^Lu^3_x\overline{y^1_x}dx+\frac{h_3h_1}{4\rho_3I_3}\int_0^L\abs{y^1_x}^2dx+\frac{h_3^2}{4\rho_3I_3}\int_0^Ly^3_x\overline{y^1_x}dx+\frac{o(1)}{\la^{\frac{\ell}{4}+\frac{1}{2}}}dx.
\end{equation}
Inserting \eqref{step4-eq3} in \eqref{step4-eq1} and \eqref{step4-eq4} in \eqref{step4-eq2} {and taking the real part of their difference}, then using the fact that $\frac{E_1}{\rho_1}=\frac{E_3}{\rho_3}$, we get \eqref{y1y3}. The proof has been completed.
\end{proof}
\begin{Lemma}\label{C2-L4-est}
Assume that \eqref{H3} holds. Then the solution $U\in D(\mathcal{A})$  of system \eqref{new-pol1}-\eqref{new-pol5} satisfies the following asymptotic behavior estimations 
\begin{equation}\label{C2-L4-est1}
\int_0^L\abs{u^3_x}^2dx=o(1),\ \ \int_0^L\abs{y^3_x}^2dx=o(1)\quad \text{and}\quad \int_0^L\abs{y^1_x}^2dx=o(1)
\end{equation}
and 
\begin{equation}\label{1C2-L4-est1}
\int_0^L\abs{\la u^3}^2dx=o(1),\quad \int_0^L\abs{\la y^1}^2dx=o(1)\quad \text{and}\quad \int_0^L\abs{\la y^3}^2dx=o(1).
\end{equation}
\end{Lemma}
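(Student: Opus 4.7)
The plan is to combine the six identities of Lemma \ref{C2-L3-est} under the specialization \eqref{H3}. With $\ell=2$ and $\frac{E_1}{\rho_1}=\frac{E_3}{\rho_3}$, the $\la^2$-terms in \eqref{C2-L3-est1}--\eqref{C2-L3-est3} vanish and all six right-hand sides reduce to $o(1)$. Introducing the real scalars
\[
a_1=\int_0^L|u^3_x|^2\,dx,\quad a_2=\int_0^L|y^3_x|^2\,dx,\quad a_3=\int_0^L|y^1_x|^2\,dx,
\]
\[
b_1=\Re\!\int_0^L\! y^1_x\overline{u^3_x}\,dx,\quad b_2=\Re\!\int_0^L\! y^3_x\overline{u^3_x}\,dx,\quad b_3=\Re\!\int_0^L\! y^1_x\overline{y^3_x}\,dx,
\]
the identities \eqref{C2-L3-est1}, \eqref{C2-L3-est2}, \eqref{C2-L3-est3}, \eqref{u1y1}, \eqref{u3y1}, \eqref{y1y3} become a linear system $M(a_1,a_2,a_3,b_1,b_2,b_3)^\top=o(1)$ whose $6\times 6$ coefficient matrix $M$ depends only on the physical parameters.

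My first step is to show that $M$ is invertible using the non-degeneracy hypothesis $\frac{G_3h_3}{\rho_3I_3}\neq\frac{G_1h_1}{\rho_1I_1}$: this difference appears as the coefficient of $b_3$ in \eqref{y1y3} and breaks the symmetry between the $y^1$- and $y^3$-sectors that would otherwise produce a nontrivial kernel. Inverting $M$ then extracts $a_1,a_2,a_3=o(1)$, which is \eqref{C2-L4-est1}. To obtain \eqref{1C2-L4-est1}, I would next multiply \eqref{new-pol2}, \eqref{new-pol4}, \eqref{new-pol5} by $\overline{u^3}$, $\overline{y^1}$, $\overline{y^3}$ respectively, integrate by parts over $(0,L)$, and solve for $\la^2\!\int|u^3|^2$, $\la^2\!\int|y^1|^2$, $\la^2\!\int|y^3|^2$. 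Each such identity expresses the target quantity as $E_i/\rho_i$ times the already-controlled $\|u^3_x\|^2$ (resp.\ $\|y^1_x\|^2$, $\|y^3_x\|^2$), plus coupling terms of the form $\int\tau\overline{(\cdot)}$ and $\int(\omega_x+y^i)\overline{(\cdot)}$. The bound $\|\tau\|=O(\la^{-1/2})$ (proved exactly as in Lemma \ref{dissipation}), together with the Poincar\'e inequality and the a-priori bound $\|U\|_{\mathcal H}=1$, makes every coupling term $o(1)$, so \eqref{1C2-L4-est1} follows.

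The main obstacle is verifying algebraically that $\det M\neq 0$ precisely under the hypothesis $\frac{G_3h_3}{\rho_3I_3}\neq\frac{G_1h_1}{\rho_1I_1}$. The six equations couple the three positive squared norms to the three signed cross products in a nonsymmetric way, and one must trace how this single scalar inequality prevents a one-dimensional kernel in $M$: degeneration of $M$ at the equality threshold is fully consistent with the slower decay rate $\ell=6$ that Theorem \ref{POL-ac} asserts in the complementary case \eqref{H4}. Once the invertibility of $M$ is secured, the remaining bookkeeping is routine Cauchy--Schwarz together with the estimates already gathered in Lemmas \ref{2C2-diss} and \ref{C2-L2-est}.
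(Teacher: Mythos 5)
Your plan is essentially the paper's argument: the six identities of Lemma \ref{C2-L3-est}, specialized to \eqref{H3} so that the $\la^2$-terms drop and every right-hand side is $o(1)$, are treated as a linear system in the three squared norms and three real cross terms, with the hypothesis $\frac{G_1h_1}{\rho_1I_1}\neq\frac{G_3h_3}{\rho_3I_3}$ supplying the nondegeneracy; and your derivation of \eqref{1C2-L4-est1} (multiply \eqref{new-pol2}, \eqref{new-pol4}, \eqref{new-pol5} by $\overline{u^3}$, $\overline{y^1}$, $\overline{y^3}$ and absorb the coupling terms) is exactly what the paper does. The one piece you leave open --- checking $\det M\neq 0$ --- is precisely where the paper's work lies, and it settles it not by a determinant computation but by explicit successive elimination: $\tfrac{h_3}{2I_3}\eqref{C2-L3-est1}+\eqref{C2-L3-est3}$ followed by adding $\tfrac{1}{h_3}\eqref{C2-L3-est2}$ gives $\Re\int_0^L u^3_x\overline{y^3_x}\,dx=o(1)$; then $\tfrac{1}{\rho_3h_3}\eqref{u1y1}+\eqref{u3y1}$ combined with $\tfrac{h_1}{2\rho_1I_1}\eqref{C2-L3-est1}$ gives $\Re\int_0^L y^1_x\overline{u^3_x}\,dx=o(1)$, whence $\int_0^L|u^3_x|^2dx=o(1)$ from \eqref{C2-L3-est1}; finally $\tfrac{h_3}{4I_3\rho_3}\eqref{u1y1}+\eqref{y1y3}$ and then subtracting $\tfrac{h_1}{4I_1\rho_1}\eqref{y1y3}$ isolates $\bigl(\tfrac{G_1h_1}{\rho_1I_1}-\tfrac{G_3h_3}{\rho_3I_3}\bigr)\Re\int_0^L y^1_x\overline{y^3_x}\,dx=o(1)$, which is the only point where the hypothesis is used. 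Note in this connection that your remark that the difference $\tfrac{G_1h_1}{\rho_1I_1}-\tfrac{G_3h_3}{\rho_3I_3}$ ``appears as the coefficient of $b_3$ in \eqref{y1y3}'' is not quite right: that coefficient also contains the terms $\tfrac{h_1^2}{4\rho_1I_1}-\tfrac{h_3^2}{4\rho_3I_3}$, and it is only after the combinations above cancel these that the clean difference survives; so the invertibility is genuinely a property of the full $6\times 6$ system, not of a single row. Once all six quantities are $o(1)$, \eqref{C2-L3-est2} and \eqref{u1y1} return the remaining two squared norms, completing \eqref{C2-L4-est1}.
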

\begin{proof}
Multiplying \eqref{C2-L3-est1} by $\frac{h_3}{2I_3}$ and summing with \eqref{C2-L3-est3}, we get 
\begin{equation}\label{C2-L4-est2}
-\frac{1}{2}\int_0^L\abs{y^3_x}^2dx-\frac{h_1}{2h_3}\Re\left(\int_0^Ly^1_x\overline{y^3_x}dx\right)+\left(\frac{1}{h_3}-\frac{G_3h_3}{I_3}\right)\Re\left(\int_0^Ly^3_x\overline{u^3_x}dx\right)=o(1).
\end{equation}
Multiplying \eqref{C2-L3-est2} by $\frac{1}{h_3}$ and summing with \eqref{C2-L4-est2}, we obtain 
\begin{equation}\label{C2-L4-est3}
\Re\left(\int_0^Lu^3_x\overline{y^3_x}dx\right)=o(1).
\end{equation}
Multiplying \eqref{u1y1} by $\frac{1}{\rho_3h_3}$ and summing with \eqref{u3y1}, then using \eqref{C2-L4-est3}, we get 
\begin{equation}\label{C2-L4-est4}
\frac{h_1}{2\rho_1I_1}\int_0^L\abs{u^3_x}^2dx-\left(\frac{h_1^2}{4\rho_1I_1}+\frac{G_1h_1}{\rho_1I_1}\right)\Re\left(\int_0^Ly^1_x\overline{u^3_x}dx\right)=o(1).
\end{equation}
Multiplying \eqref{C2-L3-est1} by $\frac{h_1}{2\rho_1I_1}$ and summing with \eqref{C2-L4-est4}, we get 
\begin{equation}\label{C2-L4-est5}
\Re\left(\int_0^Ly^1_x\overline{u^3_x}dx\right)=o(1).
\end{equation}
Using \eqref{C2-L4-est3} and \eqref{C2-L4-est5} in \eqref{C2-L3-est1}, we get 
\begin{equation}\label{C2-L4-est6}
\int_0^L\abs{u^3_x}^2dx=o(1).
\end{equation}
Multiplying \eqref{u1y1} by $\frac{h_3}{4I_3\rho_3}$ and summing with \eqref{y1y3}, then using \eqref{C2-L4-est3}, \eqref{C2-L4-est5} and \eqref{C2-L4-est6}, we get 
\begin{equation}\label{C2-L4-est7}
\frac{h_1h_3}{4I_1\rho_1}\int_0^L\abs{y^3_x}^2dx+\left(\frac{h_1^2}{4I_1\rho_1}+\frac{G_1h_1}{\rho_1I_1}-\frac{G_3h_3}{\rho_3I_3}\right)\Re\left(\int_0^Ly^1_x\overline{y^3_x}dx\right)=o(1).
\end{equation}
Multiplying \eqref{y1y3} by $-\frac{h_1}{4I_1\rho_1}$ and summing with \eqref{C2-L4-est7}, then using \eqref{C2-L4-est3} and \eqref{C2-L4-est6} we get 
\begin{equation*}
\left(\frac{G_1h_1}{\rho_1I_1}-\frac{G_3h_3}{\rho_3I_3}\right)\Re\left(\int_0^Ly^1_x\overline{y^3_x}dx\right)=o(1).
\end{equation*}
Now, using the fact that $\frac{G_1h_1}{\rho_1I_1}\neq\frac{G_3h_3}{\rho_3I_3}$ in the above estimation, we obtain 
\begin{equation}\label{C2-L4-est8}
\Re\left(\int_0^Ly^1_x\overline{y^3_x}dx\right)=o(1).
\end{equation}
Finally, inserting \eqref{C2-L4-est8}, \eqref{C2-L4-est3} and \eqref{C2-L4-est5} in \eqref{C2-L3-est2} and \eqref{u1y1}, we get the second and the third estimation in \eqref{C2-L4-est1}. In order to complete the proof of this Lemma, we need to prove \eqref{1C2-L4-est1}. For this aim, multiplying \eqref{new-pol2}, \eqref{new-pol4} and \eqref{new-pol5} respectively by $\overline{u^3}$, $\overline{y^1}$ and $\overline{y^3}$ and using Lemmas \ref{2C2-diss} and \eqref{C2-L2-est}, we get 
\begin{equation}\label{C2-L4-est9}
\left\{\begin{array}{l}
\displaystyle\int_0^L\abs{\la u^3}^2dx-\frac{E_3}{\rho_3}\int_0^L\abs{u^3_x}^2dx=o(1),\\[0.1in]
\displaystyle
\int_0^L\abs{\la y^1}^2dx-\frac{E_1}{\rho_1}\int_0^L\abs{y^1_x}^2dx=o(1),\\[0.1in]
\displaystyle
\int_0^L\abs{\la y^3}^2dx-\frac{E_3}{\rho_3}\int_0^L\abs{y^3_x}^2dx=o(1).
\end{array}
\right.
\end{equation}
Using \eqref{C2-L4-est1} in \eqref{C2-L4-est9}, we get \eqref{1C2-L4-est1}. The proof has been completed. 
\end{proof}
\begin{Lemma}\label{C2-L5-est}
Assume that $G_1\neq G_3$ and (\eqref{H4} or \eqref{H5} holds). Then the solution $U\in D(\mathcal{A})$  of system \eqref{new-pol1}-\eqref{new-pol5} satisfies the following asymptotic behavior estimations 
\begin{equation}\label{C2-L5-est1}
h_2\int_0^L\abs{u_x^3}^2dx+h_1\left(G_1-\frac{h_2}{2}\right)\Re\left(\int_0^Ly^1_x\overline{u^3_x}dx\right)+h_3\left(G_3-\frac{h_2}{2}\right)\Re\left(\int_0^Ly^3_x\overline{u_x^3}dx\right)=o(1).
\end{equation}
\begin{equation}\label{C2-L5-est2}
\left(G_1h_1-\frac{h_1h_2}{2}\right)\int_0^L\abs{y^1_x}^2dx+\left(G_3h_3-\frac{h_3h_2}{2}\right)\Re\left(\int_0^Ly^3_x\overline{y^1_x}dx\right)+h_2\Re\left(\int_0^Lu^3_x\overline{y^1_x}dx\right)=o(1).
\end{equation}
\begin{equation}\label{C2-L5-est3}
\left(G_3h_3-\frac{h_2h_3}{2}\right)\int_0^L\abs{y^3_x}^2dx+\left(G_1h_1-\frac{h_1h_2}{2}\right)\Re\left(\int_0^Ly^1_x\overline{y^3_x}dx\right)+h_2\Re\left(\int_0^Lu^3_x\overline{y^3_x}dx\right)=o(1).
\end{equation}
\end{Lemma}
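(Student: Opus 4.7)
The structure of the three estimates \eqref{C2-L5-est1}--\eqref{C2-L5-est3} suggests testing the $\omega$-equation \eqref{new-pol3} against the complex conjugate of the first spatial derivative of each undamped variable. The plan is to multiply \eqref{new-pol3} successively by $\overline{u^3_x}$, $\overline{y^1_x}$, and $\overline{y^3_x}$, and integrate over $(0,L)$. In each case, after using the definition \eqref{tau} to write $\tau_x=-u^1_x+u^3_x+h_2\omega_{xx}-\frac{h_1}{2}y^1_x-\frac{h_3}{2}y^3_x$, the cluster $\frac{1}{\rho h}\int_0^L[G_1h_1 y^1_x + G_3h_3 y^3_x + h_2\tau_x]\overline{\varphi_x}\,dx$, where $\varphi$ denotes the current test variable, expands to produce exactly the three terms appearing in the corresponding estimate. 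The coefficients $G_ih_i-\frac{h_ih_2}{2}$ emerge by combining the direct contribution $G_ih_iy^i_x\overline{\varphi_x}$ with the $-\frac{h_ih_2}{2}y^i_x\overline{\varphi_x}$ piece from the $\tau_x$-expansion, while the leading $h_2\int|u^3_x|^2\,dx$ in \eqref{C2-L5-est1} comes from the $h_2 u^3_x$ entry of $\tau_x$.

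Since \eqref{H4} or \eqref{H5} forces $\ell=6$, Lemmas \ref{2C2-diss} and \ref{C2-L2-est} give rapid decay of $\|\omega\|$, $\|\omega_{xx}\|$, $\|u^1_x\|$, and $\|\psi\|$ in $L^2(0,L)$. These estimates, applied via Cauchy--Schwarz against the $L^2$-boundedness of $\varphi_x$, show that the non-dominant terms $\la^2\int\omega\overline{\varphi_x}$, $i\la c\int\omega\overline{\varphi_x}$, $G_h\int\omega_{xx}\overline{\varphi_x}$, $h_2\int u^1_x\overline{\varphi_x}$, and $h_2^2\int\omega_{xx}\overline{\varphi_x}$ (the last two coming from the expansion of $\tau_x$) are all $o(1)$. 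The source contribution $\la^{-\ell}\int_0^L[f_6+(c+i\la)f_5]\overline{\varphi_x}\,dx$ is also $o(1)$ since $f_5\to 0$ in $H_0^2(0,L)$ and $f_6\to 0$ in $L^2(0,L)$.

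The main obstacle is the fourth-order term $-\frac{EI}{\rho h}\int_0^L\omega_{xxxx}\overline{\varphi_x}\,dx$. One integration by parts produces a bulk piece $\frac{EI}{\rho h}\int_0^L\omega_{xxx}\overline{\varphi_{xx}}\,dx$ and a boundary piece $-\frac{EI}{\rho h}[\omega_{xxx}\overline{\varphi_x}]_0^L$. To dispose of them, I would solve \eqref{new-pol3} for $\omega_{xxxx}$: all right-hand source terms are bounded in $L^2$, giving $\|\omega_{xxxx}\|=O(1)$, and Gagliardo--Nirenberg then yields $\|\omega_{xxx}\|\leq C\|\omega_{xxxx}\|^{1/2}\|\omega_{xx}\|^{1/2}+\|\omega_{xx}\|$, which is small enough—combined with $\|\varphi_{xx}\|=O(\la)$ read off from the relevant test-variable equation—to make the bulk piece $o(1)$. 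For the boundary piece, the trace Gagliardo--Nirenberg bounds $|\omega_{xxx}(\zeta)|\leq C\|\omega_{xxxx}\|^{1/2}\|\omega_{xxx}\|^{1/2}+\|\omega_{xxx}\|$ and $|\varphi_x(\zeta)|\leq C\|\varphi_{xx}\|^{1/2}\|\varphi_x\|^{1/2}+\|\varphi_x\|$ combine, via the above decay rates, to give $o(1)$. Taking real parts in each of the three test identities then yields \eqref{C2-L5-est1}, \eqref{C2-L5-est2}, and \eqref{C2-L5-est3}.
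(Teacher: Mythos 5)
Your proposal is correct and follows essentially the same route as the paper: multiply \eqref{new-pol3} by $\overline{u^3_x}$, $\overline{y^1_x}$, $\overline{y^3_x}$, integrate by parts only the $\omega_{xxxx}$ term, control the resulting bulk term via $\|\omega_{xxx}\|\le o(1)/\la$ (Gagliardo--Nirenberg with $\|\omega_{xxxx}\|=O(1)$ and $\|\omega_{xx}\|=o(1)/\la^2$) against $\|\varphi_{xx}\|=O(\la)$, and the boundary term via the pointwise bounds $|\omega_{xxx}(\zeta)|\le o(1)/\la^{1/2}$ and $|\varphi_x(\zeta)|\le O(\la^{1/2})$, then expand $\tau_x$ to produce the stated coefficients. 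All the asymptotic rates you invoke match those established in Lemmas \ref{2C2-diss} and \ref{C2-L2-est} for $\ell=6$, so no gap.
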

\begin{proof}
The proof of this Lemma is divided into two steps.\\
\textbf{Step 1.} In order to prove \eqref{C2-L5-est1}-\eqref{C2-L5-est3}, we need to prove the following estimations 
\begin{equation}\label{C2-L5-est4}
\|\omega_{xxx}\|\leq \frac{o(1)}{\la},\ \ \abs{u^3_x(\xi)},\abs{y^i_{x}(\xi)}\leq O(\la^{\frac{1}{2}}),\ \ \abs{\omega_{xxx}(\xi)}\leq \frac{o(1)}{\la^{\frac{1}{2}}}\ \ \text{for}\ i\in \{1,3\}\ \ \text{and}\ \ \xi\in \{0,L\}.
\end{equation}
Using Galgliardo-Nirenberg interpolation inequality, Lemma \ref{C2-L2-est} and  the fact that $\omega_{xxxx}$ is uniformly bounded in $L^2(0,L)$ and   taking $\ell=6$, we get 
\begin{equation*}
\|\omega_{xxx}\|\leq \|\omega_{xxxx}\|^{\frac{1}{2}}\|\omega_{xx}\|^{\frac{1}{2}}+\|\omega_{xx}\|\leq \frac{o(1)}{\la}.
\end{equation*}
Since $\omega\in H^4(0,L)$, then $\omega_{xxx}\in H^1(0,L)$. Using Gagliardo inequality, we get 
\begin{equation*}
|\omega_{xxx}(\xi)|\leq \|\omega\|_{H^4(0,L)}^{\frac{3}{4}}\|\omega\|^{\frac{1}{4}}_{H^2(0,L)}\leq \frac{o(1)}{\la^{\frac{1}{2}}}.
\end{equation*}
Since $u^3\in H^2(0,L)$, then 
\begin{equation*}
|u^3_x(\xi)|\leq \|u^3\|^{\frac{1}{2}}_{H^1(0,L)}\|u^3\|^{\frac{1}{2}}_{H^2(0,L)}\leq O(\la^{\frac{1}{2}}).
\end{equation*}
Using the same argument, we obtain that $\abs{y^i(x)(\xi)}\leq O(\la^{\frac{1}{2}})$ for $i\in {1,3}$ and $\xi\in \{0,L\}$.\\
\textbf{Step 2.} In this step, we need to prove estimations \eqref{C2-L5-est1}-\eqref{C2-L5-est3}. For this aim, multiplying \eqref{new-pol3} by $\overline{u^3_{x}}$, integrating over $(0,L)$, taking the real part and using the fact that $u^3_x$ is uniformly bounded in $L^2(0,L)$, we get 
\begin{equation}\label{C2-L5-est5}
\left\{\begin{array}{l}
\displaystyle
\frac{EI}{\rho h}\left(\Re\left(\int_0^L\omega_{xxx}\overline{u^3_{xx}}dx\right)-\Re\left(\left[\omega_{xxx}(\xi)\overline{u^3_x}(\xi)\right]_{\xi=0}^{\xi=L}\right)\right)\\
\displaystyle
+\frac{G_1h_1}{\rho h}\Re\left(\int_0^Ly^1_x\overline{u^3_x}dx\right)+\frac{G_3h_3}{\rho h}\Re\left(\int_0^Ly^3_x\overline{u^3_x}dx\right)+\frac{h_2}{\rho h}\Re\left(\int_0^L\tau_x\overline{u^3_x}dx\right)=\frac{o(1)}{\la^2}.
\end{array}
\right.
\end{equation}
Using Step 1 and the fact that $\frac{1}{\la}\overline{u^3_{xx}}$ is uniformly bounded in $L^2(0,L)$,  we obtain 
\begin{equation*}
\left|\frac{EI}{\rho h}\left(\Re\left(\int_0^L\omega_{xxx}\overline{u^3_{xx}}dx\right)-\Re\left(\left[\omega_{xxx}(\xi)\overline{u^3_x}(\xi)\right]_{\xi=0}^{\xi=L}\right)\right)\right|=o(1).
\end{equation*}
Inserting the above estimation in \eqref{C2-L5-est5}, we get 
\begin{equation}\label{C2-L5-est6}
G_1h_1\Re\left(\int_0^Ly^1_x\overline{u^3_x}dx\right)+G_3h_3\Re\left(\int_0^Ly^3_x\overline{u^3_x}dx\right)+h_2\Re\left(\int_0^L\tau_x\overline{u^3_x}dx\right)=o(1).
\end{equation}
Now, using the definition of the function $\tau$, we get 
\begin{equation*}
h_2\Re\left(\int_0^L\tau_x\overline{u^3_x}dx\right)=h_2\int_0^L\abs{u^3_x}^2dx-\frac{h_1h_2}{2}\Re\left(\int_0^Ly^1_x\overline{u^3_x}dx\right)-\frac{h_2h_3}{2}\Re\left(\int_0^Ly^3_x\overline{u^3_x}dx\right)+\frac{o(1)}{\la^2}.
\end{equation*}
Inserting the above equation in \eqref{C2-L5-est6}, we get \eqref{C2-L5-est1}. Now, multiplying \eqref{new-pol3} by $y^1_x$ and $y^3_x$ respectively and using the same argument in Step 1, we get \eqref{C2-L5-est2} and \eqref{C2-L5-est3}.
\end{proof}
\begin{Lemma}\label{C2-L6-est}
Assume that $G_1\neq G_3$ and (\eqref{H4} or \eqref{H5} holds). Then the solution $U\in D(\mathcal{A})$  of system \eqref{new-pol1}-\eqref{new-pol5} satisfies the following asymptotic behavior estimations 
\begin{equation}\label{C2-L6-est1}
\int_0^L\abs{u^3_x}^2dx=o(1),\ \int_0^L\abs{y^1_x}^2dx=o(1), \quad \int_0^L\abs{y^3_x}^2dx=o(1), 
\end{equation}
and 
\begin{equation}\label{1C2-L6-est1}
\int_0^L\abs{\la u^3}^2dx=o(1),\quad \int_0^L\abs{\la y^1}^2dx=o(1), \quad \int_0^L\abs{\la y^3}^2dx=o(1).
\end{equation}
\end{Lemma}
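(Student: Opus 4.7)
\medskip
\noindent\textbf{Proof plan.} The strategy mirrors that of Lemma \ref{C2-L4-est} but replaces the role played there by \eqref{u3y1} and \eqref{y1y3} with the three identities of Lemma \ref{C2-L5-est}; this substitution is necessary because \eqref{u3y1}--\eqref{y1y3} require $E_1/\rho_1=E_3/\rho_3$, which fails under \eqref{H5}. I would first observe that, under either \eqref{H4} or \eqref{H5}, we have $\ell=6$, so Lemma \ref{C2-L2-est} yields $\|u^1_x\|_{L^2}=o(1)/\lambda^{2}$. Combined with the uniform boundedness of $u^3_x$ and $y^3_x$ in $L^2(0,L)$, the Cauchy--Schwarz inequality immediately shows that the $\lambda^{2}$-weighted cross terms $\lambda^{2}\Re\int_0^L u^3_x\overline{u^1_x}\,dx$ and $\lambda^{2}\Re\int_0^L u^1_x\overline{y^3_x}\,dx$ appearing in \eqref{C2-L3-est1}--\eqref{C2-L3-est2} are $o(1)$. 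Thus those identities become clean relations among the six real scalars $I_{u^3}=\int_0^L|u^3_x|^2\,dx$, $I_{y^1}=\int_0^L|y^1_x|^2\,dx$, $I_{y^3}=\int_0^L|y^3_x|^2\,dx$, and the cross products $R_{13}=\Re\int_0^L y^1_x\overline{u^3_x}\,dx$, $R_{33}=\Re\int_0^L y^3_x\overline{u^3_x}\,dx$, $R_{11}=\Re\int_0^L y^1_x\overline{y^3_x}\,dx$.

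\smallskip
Next, I would adjoin the three identities \eqref{C2-L5-est1}--\eqref{C2-L5-est3} from Lemma \ref{C2-L5-est}, together with the unconditional identity \eqref{u1y1}, to obtain a linear system in the six unknowns above whose right-hand sides are $o(1)$. By carefully chosen linear combinations---for example, combining \eqref{C2-L5-est1} with (the reduced form of) \eqref{C2-L3-est1} to eliminate $I_{u^3}$, combining \eqref{C2-L5-est3} with (the reduced form of) \eqref{C2-L3-est2} to eliminate $I_{y^3}$, and then using \eqref{C2-L3-est3} and \eqref{u1y1} to close the remaining $3\times 3$ block in the cross-product variables---one reduces the system to one whose determinant is a nonzero polynomial containing an explicit factor of $G_1-G_3$. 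The hypothesis $G_1\neq G_3$ then furnishes invertibility, and each of $I_{u^3}, I_{y^1}, I_{y^3}$ is forced to be $o(1)$, establishing \eqref{C2-L6-est1}. Under \eqref{H4} the additional identities \eqref{u3y1} and \eqref{y1y3} are also available and serve as a consistent but redundant confirmation of the computation.

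\smallskip
The main technical obstacle is the verification that the chosen linear combinations yield a genuinely invertible reduced system whose sole non-degeneracy condition reduces to $G_1\neq G_3$; the cancellations depend on the structural factors $G_i-h_2/2$ carried by the coefficients in \eqref{C2-L5-est1}--\eqref{C2-L5-est3}, and it is precisely this structure that distinguishes the argument from Lemma \ref{C2-L4-est}, where invertibility rested on $G_1h_1/(\rho_1I_1)\neq G_3h_3/(\rho_3I_3)$ instead. Once \eqref{C2-L6-est1} is in hand, the second set of estimates \eqref{1C2-L6-est1} follows exactly as in the final step of the proof of Lemma \ref{C2-L4-est}: multiply \eqref{new-pol2}, \eqref{new-pol4}, \eqref{new-pol5} respectively by $-\overline{u^3}$, $-\overline{y^1}$, $-\overline{y^3}$, integrate by parts over $(0,L)$, apply Lemma \ref{2C2-diss} together with the already-established norm bounds, and use \eqref{pol3}, \eqref{pol7}, \eqref{pol9} to pass from $\lambda \overline{u^3}$, $\lambda\overline{y^1}$, $\lambda\overline{y^3}$ back to $v^3, z^1, z^3$, yielding the three $L^2$ bounds on $\lambda u^3$, $\lambda y^1$, $\lambda y^3$.
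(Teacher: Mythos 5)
Your plan is essentially the paper's own proof: it uses $\|u^1_x\|=o(1)/\lambda^{2}$ (Lemma \ref{C2-L2-est} with $\ell=6$) to neutralize the $\lambda^2$-weighted cross terms, then combines \eqref{C2-L3-est1}--\eqref{C2-L3-est3} and \eqref{u1y1} with \eqref{C2-L5-est1}--\eqref{C2-L5-est3} into a linear system in $I_{u^3},I_{y^1},I_{y^3}$ and the three cross products, closed exactly by $G_1\neq G_3$ (the paper's explicit eliminations produce the factor $(G_3^2-G_1^2)h_1^2\int_0^L|y^1_x|^2dx=o(1)$, confirming your claimed non-degeneracy condition), and finishes \eqref{1C2-L6-est1} by the same multiplier argument. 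The only cosmetic discrepancy is your description of the \eqref{C2-L5-est3}+\eqref{C2-L3-est2} combination as "eliminating $I_{y^3}$" — it in fact leaves $G_3h_3 I_{y^3}+G_1h_1\Re\int_0^L y^1_x\overline{y^3_x}\,dx=o(1)$ — but this does not affect the validity of the scheme.
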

\begin{proof}
The proof of this Lemma is divided into several steps.\\
\textbf{Step 1.} The aim of this step is to prove that 
\begin{equation}\label{C2-L6-est2}
\Re\left(\int_0^Lu^3_x\overline{y^3_x}dx\right)=\left\{\begin{array}{lll}\displaystyle\frac{o(1)}{\la^2}&\text{if}&\eqref{H4}\ \text{holds},\\ o(1)&\text{if}&\eqref{H5}\ \text{holds}.\end{array}\right.
\end{equation}
Using the same technique in Lemma \ref{C2-L4-est}, Lemma \ref{C2-L2-est} and using the fact that $u^3_x$ is uniformly bounded, we obtain the aim of this step.\\ 
\textbf{Step 2.} The aim of this step is to prove that 
\begin{equation}\label{C2-L6-est3}
\Re\left(\int_0^Lu^3_x\overline{y^1_x}dx\right)=o(1)\quad \text{and}\quad \int_0^L\abs{u^3_x}^2dx=o(1).
\end{equation}
Multiplying \eqref{C2-L3-est1} by $h_2$ and summing with \eqref{C2-L5-est1}, then using step 1, we get the the first estimation in \eqref{C2-L6-est3}. Now, Inserting  the first estimation in \eqref{C2-L6-est3} and \eqref{C2-L6-est2} in \eqref{C2-L3-est1}, we get the second estimation in \eqref{C2-L6-est3}.\\
\textbf{Step 3.} In this step we prove that 
\begin{equation}\label{C2-L6-est4}
\int_0^L\abs{y^1_x}^2dx=o(1)\quad \text{and}\quad \int_0^L\abs{y^3_x}^2dx=o(1).
\end{equation}
For this aim, multiplying equation \eqref{C2-L3-est2} by $h_2$ and summing with \eqref{C2-L5-est3} then using Step 1 and Step 2, we get 
\begin{equation}\label{C2-L6-est5}
G_3h_3\int_0^L\abs{y^3_x}^2dx+G_1h_1\Re\left(\int_0^Ly^1_x\overline{y^3_x}dx\right)=o(1).
\end{equation}
Now, Multiplying \eqref{u1y1} by $h_1$ and \eqref{C2-L3-est2} by $-h_3$, summing the  result, we get 
\begin{equation}\label{C2-L6-est6}
h_1^2\int_0^L\abs{y^1_x}^2dx-h_3^2\int_0^L\abs{y^3_x}^2dx=\frac{o(1)}{\la}.
\end{equation}
Now, multiplying \eqref{u1y1} by $h_2$ and summing with \eqref{C2-L5-est2}, we get 
\begin{equation}\label{C2-L6-est7}
G_1h_1\int_0^L\abs{y^1_x}^2dx+G_3h_3\Re\left(\int_0^Ly^3_x\overline{y^1_x}dx\right)=o(1).
\end{equation}
{Similarly, multiplying} \eqref{C2-L6-est5} by $G_3h_3$ and \eqref{C2-L6-est7} by $-G_1h_1$, then summing the result we get
\begin{equation}\label{C2-L6-est8}
G_3^2h_3^2\int_0^L\abs{y^3_x}^2dx-G_1^2h_1^2\int_0^L\abs{y^1_x}^2dx=o(1).
\end{equation}
{Furthermore, multiplying} \eqref{C2-L6-est6} by $G_3^2$ and summing with \eqref{C2-L6-est8}, we get 
\begin{equation}\label{C2-L6-est9}
\left(G_3^2-G_1^2\right)h_1^2\int_0^L\abs{y^1_x}^2dx=o(1).
\end{equation}
Using the fact that $G_3\neq G_1$, then we obtain 
\begin{equation}\label{C2-L6-est10}
h_1^2\int_0^L\abs{y^1_x}^2dx=o(1).
\end{equation}
Inserting \eqref{C2-L6-est10} in \eqref{C2-L6-est6}, we get the second estimation in \eqref{C2-L6-est3}. Next, in order to prove \eqref{1C2-L6-est1}, multiplying \eqref{new-pol2}, \eqref{new-pol4} and \eqref{new-pol5} respectively by $\overline{u^3}$, $\overline{y^1}$ and $\overline{y^3}$, we get 
\begin{equation}\label{C2-L6-est11}
\left\{\begin{array}{l}
\displaystyle\int_0^L\abs{\la u^3}^2dx-\frac{E_3}{\rho_3}\int_0^L\abs{u^3_x}^2dx=o(1),\\[0.1in]
\displaystyle
\int_0^L\abs{\la y^1}^2dx-\frac{E_1}{\rho_1}\int_0^L\abs{y^1_x}^2dx=o(1),\\[0.1in]
\displaystyle
\int_0^L\abs{\la y^3}^2dx-\frac{E_3}{\rho_3}\int_0^L\abs{y^3_x}^2dx=o(1).
\end{array}
\right.
\end{equation}
Using \eqref{C2-L6-est1} in \eqref{C2-L6-est11}, we get \eqref{1C2-L6-est1}. The proof has been completed. 
\end{proof}
\\
{We are now ready to finish the proof of Theorem \ref{POL-ac}}. We distinguish two cases.\\
\textbf{Case 1}. If $G_1\neq G_3$ and \eqref{H3} holds, then using  Lemmas \ref{2C2-diss}, \ref{C2-L2-est}, \ref{C2-L4-est}, we get 
$$
\begin{array}{lllll}
\displaystyle
\int_0^L\abs{v^1}^2dx=\frac{o(1)}{\la^2},&\displaystyle 
\int_0^L\abs{z^1}^2dx=o(1),&\displaystyle
\int_0^L\abs{u_x^1}^2dx=\frac{o(1)}{\la^2},&\displaystyle \int_0^L\abs{u_x^3}^2dx=o(1),&\displaystyle \int_0^L\abs{\psi}^2dx=\frac{o(1)}{\la^2},\\[0.1in]
\displaystyle\int_0^L\abs{\omega_{xx}}^2dx=\frac{o(1)}{\la^2},&\displaystyle 
\int_0^L\abs{y^1_x}^2dx=o(1),&\displaystyle  \int_0^L\abs{y^3_x}^2dx=o(1),&\displaystyle \int_0^L\abs{v^3}^2dx=o(1),&\displaystyle\int_0^L\abs{z^3}^2dx=o(1).
\end{array}
$$
It follows that {$\|U\|_{\mathcal{H}}=o(1)$}, which contradicts ${\rm (H_2)}$. This implies that 
$$
\sup_{\la\in \mathbb{R}}\|(i\la I-\mathcal{A})^{-1}\|_{\mathcal{H}}{\le}O(\la^{2}).
$$
\textbf{Case 2}. If $G_1\neq G_3$ and (\eqref{H4} or \eqref{H5} holds), then  using  Lemmas \ref{2C2-diss}, \ref{C2-L2-est}, \ref{C2-L6-est}, we get 
$$
\begin{array}{lllll}
\displaystyle
\int_0^L\abs{v^1}^2dx=\frac{o(1)}{\la^6},&\displaystyle 
\int_0^L\abs{z^1}^2dx=o(1),&\displaystyle
\int_0^L\abs{u_x^1}^2dx=\frac{o(1)}{\la^6},&\displaystyle \int_0^L\abs{u_x^3}^2dx=o(1),&\displaystyle \int_0^L\abs{\psi}^2dx=\frac{o(1)}{\la^6},\\[0.1in]
\displaystyle\int_0^L\abs{\omega_{xx}}^2dx=\frac{o(1)}{\la^4},&\displaystyle 
\int_0^L\abs{y^1_x}^2dx=o(1),&\displaystyle  \int_0^L\abs{y^3_x}^2dx=o(1),&\displaystyle \int_0^L\abs{v^3}^2dx=o(1),&\displaystyle\int_0^L\abs{z^3}^2dx=o(1).
\end{array}
$$
It follows that {$\|U\|_{\mathcal{H}}=o(1)$}, which contradicts ${\rm (H_2)}$. This implies that 
$$
\sup_{\la\in \mathbb{R}}\|(i\la I-\mathcal{A})^{-1}\|_{\mathcal{H}}{\le}O(\la^{6}).
$$
The proof has been completed. 
\subsection{Proof of Theorem \ref{POL-bc}}. In this subsection, we assume that $(b,c>0\ \text{and}\ a=0\ \text{and}\ \ \frac{E_1}{\rho_1}\neq \frac{E_3}{\rho_3})$. We will check the condition ${\rm (H_2)}$ by finding a contradiction with \eqref{1pol1}. From \eqref{pol1}-\eqref{pol10}, we obtain the following system 
\begin{eqnarray}
\la^2u^1+(\rho_1h_1)^{-1}\left[E_1h_1u^1_{xx}+\tau\right]&=&-\la^{-6}f_2-i\la^{-5}f_1,\label{3new-pol1}\\
\la^2y^1+(\rho_1I_1)^{-1}\left[E_1I_1y^1_{xx}+\frac{h_1}{2}\tau-G_1h_1(\omega_x+y^1)-i\la by^1\right]&=&-\la^{-6}f_4-\la^{-6}(\frac{b}{\rho_1I_1}+i\la)f_3,\label{3new-pol4}\\
\hspace{0.75cm}\la^2\omega+(\rho h)^{-1}\left[-EI\omega_{xxxx}+G_h\omega_{xx}+G_1h_1y^1_x+G_3h_3y^3_x+h_2\tau_x-i\la c\omega\right]&=&-\la^{-6}f_6-\la^{-6}(c+i\la)f_5,\label{3new-pol3}\\
\la^2u^3+(\rho_3 h_3)^{-1}\left[E_3h_3u^3_{xx}-\tau\right]&=&-\la^{-6}f_8-i\la^{-5}f_7,\label{3new-pol2}\\
\la^2y^3+(\rho_3I_3)^{-1}\left[E_3I_3y_{xx}^3+\frac{h_3}{2}\tau-G_3h_3(\omega_x+y^3)\right]&=&-\la^{-6}f_{10}-i\la^{-5}f_9,\label{3new-pol5}
\end{eqnarray}
where $G_h=G_1h_1+G_3h_3$. For clarity, we divide the proof into several Lemmas.
\begin{Lemma}\label{3C3-diss}
The solution $U\in D(\mathcal{A})$ of system \eqref{3new-pol1}-\eqref{3new-pol5} satisfies the following asymptotic behavior estimations 
\begin{equation}\label{Case2-diss}
\int_0^L\abs{z^1}^2dx=\frac{o(1)}{\la^{6}},\ \ \int_0^L\abs{\psi}^2dx=\frac{o(1)}{\la^{6}},\ \ \int_0^L\abs{y^1}^2dx=\frac{o(1)}{\la^{8}}\ \text{and}\ \int_0^L\abs{\omega}^2dx=\frac{o(1)}{\la^{8}}.
\end{equation}\label{C3-diss}
\end{Lemma}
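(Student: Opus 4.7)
The plan is to mirror the dissipation lemmas from the previous two subsections (Lemmas \ref{dissipation} and \ref{2C2-diss}). Taking the inner product of $F$ with $U$ in $\mathcal{H}$ and using that the energy dissipation in the current case comes only from the $b$ and $c$ damping terms, we get
\begin{equation*}
b\int_0^L\abs{z^1}^2dx+c\int_0^L\abs{\psi}^2dx=\Re\left(\left<(i\la I-\mathcal{A})U,U\right>_{\mathcal{H}}\right)=o(\la^{-6}),
\end{equation*}
where we have used \eqref{2pol2} with $\ell=6$ and the uniform bound $\|U\|_{\mathcal{H}}=1$. Since $b,c>0$ in \eqref{A3}, this immediately yields the first two estimates of \eqref{Case2-diss}.

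Next, I would recover $y^1$ and $\omega$ from $z^1$ and $\psi$ by inverting the compatibility relations in the resolvent system. Namely, from \eqref{pol7} we have $i\la y^1=z^1+\la^{-6}f_3$, hence
\begin{equation*}
\la^2\int_0^L\abs{y^1}^2dx\leq 2\int_0^L\abs{z^1}^2dx+\frac{2}{\la^{12}}\int_0^L\abs{f_3}^2dx=\frac{o(1)}{\la^6},
\end{equation*}
since $f_3\to0$ in $H_0^1(0,L)\hookrightarrow L^2(0,L)$. Dividing by $\la^2$ gives the third estimate. Analogously, from \eqref{pol5} we have $i\la\omega=\psi+\la^{-6}f_5$, and using $f_5\to0$ in $H_0^2(0,L)\hookrightarrow L^2(0,L)$ together with the estimate just proved for $\psi$, we obtain
\begin{equation*}
\int_0^L\abs{\omega}^2dx\leq \frac{2}{\la^2}\int_0^L\abs{\psi}^2dx+\frac{2}{\la^{14}}\int_0^L\abs{f_5}^2dx=\frac{o(1)}{\la^8},
\end{equation*}
which is the fourth estimate.

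There is no real obstacle here: this is a bookkeeping lemma that simply records the direct consequences of the dissipation identity and the compatibility equations linking the displacement variables to their velocities. The only point requiring a small amount of care is the exponent tracking (getting $\la^{-6}$ versus $\la^{-8}$), which relies on taking $\ell=6$ in \eqref{2pol2} and using the $L^2$ embedding for $f_3$ and $f_5$ so that the $\la^{-2\ell}$ terms are negligible compared to the $\la^{-2}\|z^1\|^2$ and $\la^{-2}\|\psi\|^2$ terms. This lemma will then serve, as in the previous subsections, as the starting point to propagate the smallness through the remaining components $u^1,u^3,y^3$ and conclude $\|U\|_{\mathcal{H}}=o(1)$, contradicting \eqref{1pol1}.
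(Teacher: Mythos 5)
Your proposal is correct and follows essentially the same route as the paper, whose proof simply invokes the same argument as Lemma \ref{2C2-diss}: the dissipation identity yields the $o(\la^{-6})$ bounds on $z^1$ and $\psi$, and the compatibility relations \eqref{pol7} and \eqref{pol5} with $f_3,f_5\to 0$ transfer these to $y^1$ and $\omega$ with the extra factor $\la^{-2}$. Your exponent bookkeeping is accurate, so nothing further is needed.
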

\begin{proof}
Same arguments of Lemma \ref{2C2-diss}.
\end{proof}
\begin{Lemma}\label{C3-L2-est}
The solution $U\in D(\mathcal{A})$  of system \eqref{3new-pol1}-\eqref{3new-pol5} satisfies the following asymptotic behavior estimations 
\begin{equation}\label{C2-L2-est1}
\int_0^L\abs{\omega_{xx}}^2=\frac{o(1)}{\la^4},\ \int_0^L\abs{\omega_{x}}^2=\frac{o(1)}{\la^4}\quad \text{and}\quad \int_0^L\abs{y^1_x}^2dx=\frac{o(1)}{\la^{5}}.
\end{equation}
\end{Lemma}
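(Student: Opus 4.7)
The plan is to adapt the multiplier strategy of Lemma \ref{C2-L2-est}, exploiting the sharp decays $\|\omega\|^2 = o(\lambda^{-8})$ and $\|y^1\|^2 = o(\lambda^{-8})$ just established in Lemma \ref{3C3-diss}. The argument splits into a multiplier against $-\bar\omega$ applied to the beam equation \eqref{3new-pol3}, and a multiplier against $-\bar{y^1}$ applied to the shear equation \eqref{3new-pol4}.

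For the first two estimates, I multiply \eqref{3new-pol3} by $-\bar\omega$ and integrate by parts on $(0,L)$. The clamped boundary conditions on $\omega$ kill every boundary term arising from $\omega_{xxxx}$ and $\omega_{xx}$, yielding
\begin{equation*}
\frac{EI}{\rho h}\|\omega_{xx}\|^2 + \frac{G_h}{\rho h}\|\omega_x\|^2 = \lambda^2\|\omega\|^2 - i\lambda\tfrac{c}{\rho h}\|\omega\|^2 + \frac{G_1h_1}{\rho h}\int_0^L y^1_x\bar\omega\,dx + \frac{G_3h_3}{\rho h}\int_0^L y^3_x\bar\omega\,dx + \frac{h_2}{\rho h}\int_0^L \tau_x\bar\omega\,dx + R,
\end{equation*}
where $R$ gathers the $f_5, f_6$ source contributions. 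Since $y^1_x, y^3_x, \tau_x$ are uniformly bounded in $L^2(0,L)$ while $\|\omega\| = o(\lambda^{-4})$, Cauchy--Schwarz bounds each coupling integral by $o(\lambda^{-4})$; the terms $\lambda^2\|\omega\|^2$ and $\lambda\|\omega\|^2$ are $o(\lambda^{-6})$ and $o(\lambda^{-7})$ respectively, and $R$ is even smaller. This gives $\|\omega_{xx}\|^2, \|\omega_x\|^2 = o(\lambda^{-4})$.

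For the $y^1_x$ estimate, I first sharpen the control on the shear stress. From \eqref{pol1}, \eqref{pol3}, \eqref{pol9} one has $\|u^1\|, \|u^3\|, \|y^3\| = O(\lambda^{-1})$; combining with $\|y^1\| = o(\lambda^{-4})$ from Lemma \ref{3C3-diss} and $\|\omega_x\| = o(\lambda^{-2})$ from the step above (or directly via Gagliardo--Nirenberg applied to $\omega$), the identity \eqref{tau} yields $\|\tau\| = O(\lambda^{-1})$. I then multiply \eqref{3new-pol4} by $-\bar{y^1}$ and integrate by parts to obtain
\begin{equation*}
\frac{E_1}{\rho_1}\|y^1_x\|^2 = \lambda^2\|y^1\|^2 + \frac{h_1}{2\rho_1 I_1}\int_0^L\tau\,\bar{y^1}\,dx - \frac{G_1 h_1}{\rho_1 I_1}\int_0^L(\omega_x + y^1)\bar{y^1}\,dx - i\lambda\tfrac{b}{\rho_1 I_1}\|y^1\|^2 + R',
\end{equation*}
where $R'$ collects the $f_3, f_4$ contributions. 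The controlling term is $|\int\tau\bar{y^1}| \le \|\tau\|\,\|y^1\| = O(\lambda^{-1})\cdot o(\lambda^{-4}) = o(\lambda^{-5})$; all other terms are strictly smaller ($\lambda^2\|y^1\|^2 = o(\lambda^{-6})$, the $(\omega_x+y^1)\bar{y^1}$ integral is $o(\lambda^{-6})$, etc.), giving $\|y^1_x\|^2 = o(\lambda^{-5})$.

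No single step is a real obstacle -- the argument is a bookkeeping exercise -- but the subtle point is getting the power of $\lambda$ right in the bound on $\|\tau\|$: one must recognize that although the damped variables $\omega, y^1$ decay much faster than $\lambda^{-1}$, the undamped variables $u^1, u^3, y^3$ only decay like $\lambda^{-1}$ and therefore set the rate of $\|\tau\|$, which in turn determines the $o(\lambda^{-5})$ (rather than $o(\lambda^{-4})$ or better) in the $y^1_x$ estimate.
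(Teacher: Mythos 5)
Your proof is correct and follows essentially the same route as the paper: the paper likewise obtains the first two estimates by multiplying \eqref{3new-pol3} by $-\overline{\omega}$ (referencing the argument of Lemma \ref{C2-L2-est}) and the third by multiplying \eqref{3new-pol4} by $\overline{y^1}$, with the decisive ingredient being exactly the bound you isolate, namely that $\la\tau$ is uniformly bounded in $L^2(0,L)$ so that $\left|\int_0^L\tau\overline{y^1}\,dx\right|=o(\la^{-5})$. Your write-up merely supplies the bookkeeping the paper leaves implicit.
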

\begin{proof}
For the first two estimations, using the same arguments of Lemma \ref{C2-L2-est}. Now, we will prove the third estimation. For this aim, multiplying \eqref{3new-pol4} by $\overline{y^1}$, integrating over $(0,L)$  using Lemma \ref{3C3-diss} and the facts that $\la \tau$ is uniformly bounded in $L^2(0,L)$, $f_3\to 0$ in $H_0^1(0,L)$ and $f_4\to 0$ in $L^2(0,L)$, we get the third estimation in \eqref{C2-L2-est1}.
\end{proof}
\begin{Lemma}\label{C3-Lemma3}
Assume that $\frac{E_1}{\rho_1}\neq \frac{E_3}{\rho_3}$. Then, the solution $U\in D(\mathcal{A})$ of system \eqref{3new-pol1}-\eqref{3new-pol5} satisfies the following asymptotic behavior estimations 
\begin{equation}\label{1C3-Lemma3}
-\int_0^L\abs{u^1_x}^2dx+\Re\left(\int_0^Lu^3_x\overline{u^1_x}dx\right)-\frac{h_3}{2}\Re\left(\int_0^Ly^3_x\overline{u^1_x}dx\right)=\frac{o(1)}{\la^2},
\end{equation}
\begin{equation}\label{2C3-Lemma3}
\frac{h_3}{2}\int_0^L\abs{y^3_x}^2dx-\Re\left(\int_0^Lu^3_x\overline{y^3_x}dx\right)+\Re\left(\int_0^Lu^1_x\overline{y^3_x}dx\right)=\frac{o(1)}{\la^{\frac{1}{2}}},
\end{equation}
\begin{equation}\label{3C3-Lemma3}
\int_0^L\abs{u^3_x}^2dx-\Re\left(\int_0^Lu^1_x\overline{u^3_x}dx\right)-\frac{h_3}{2}\Re\left(\int_0^Ly^3_x\overline{u^3_x}dx\right)=\frac{o(1)}{\la^{\frac{1}{2}}},
\end{equation}
\begin{equation}\label{4C3-Lemma3}
\left\{\begin{array}{l}
\displaystyle
\frac{h_3}{2I_3}\int_0^L\abs{u^3_x}^2dx-\frac{1}{2}\int_0^L\abs{y^3_x}^2dx-\frac{1}{h_3}\Re\left(\int_0^Lu^1_x\overline{y^3_x}dx\right)-\frac{h_3}{2I_3}\Re\left(\int_0^Lu^1_x\overline{u^3_x}dx\right)\\[0.1in]
\displaystyle
+\left(\frac{1}{h_3}-\frac{h_3^2}{4I_3}-\frac{G_3h_3}{I_3}\right)\Re\left(\int_0^Lu^3_x\overline{y^3_x}dx\right)=\frac{o(1)}{\la^2}.
\end{array}
\right.
\end{equation}
\end{Lemma}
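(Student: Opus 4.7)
The plan is to mirror the four-step decomposition used in Lemma \ref{L5}, adapted to the present regime where $a=0$ and damping instead comes through $b$ and $c$. The core mechanism is the same: for each claimed identity, multiply two of the resolvent equations \eqref{3new-pol1}--\eqref{3new-pol5} by carefully chosen second derivatives, integrate by parts over $(0,L)$, use \eqref{pol1}, \eqref{pol3}, \eqref{pol5}, \eqref{pol7}, \eqref{pol9} to convert the $\la$--factors into $u^1_x$, $u^3_x$, $y^1_x$, $y^3_x$, expand $\tau$ via \eqref{tau}, and subtract the two resulting identities so that the symmetric cross-term $\int u^1_{xx}\overline{u^3_{xx}}$ (or its analogue) cancels. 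The factor $\left(\tfrac{E_3}{\rho_3}-\tfrac{E_1}{\rho_1}\right)\neq 0$ then appears in front of the desired quadratic combination.

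For \eqref{1C3-Lemma3}, I would multiply \eqref{3new-pol1} by $\tfrac{E_3}{\rho_3}\overline{u^3_{xx}}$ and \eqref{3new-pol2} by $\tfrac{E_1}{\rho_1}\overline{u^1_{xx}}$, integrate by parts, and expand $\tau$. The contributions from $y^1$ and from $\omega_x$ arising inside $\tau$ are absorbed using Lemma \ref{C3-L2-est} ($\|\omega_x\|, \|y^1_x\| = o(1)/\la^2$) and Lemma \ref{3C3-diss}, leaving only the pieces in $u^1_x$, $u^3_x$ and $y^3_x$; subtraction and real part yield \eqref{1C3-Lemma3}. For \eqref{2C3-Lemma3}, the analogous multipliers are $\tfrac{E_3}{\rho_3}\overline{y^3_{xx}}$ applied to \eqref{3new-pol1} and $\tfrac{E_1}{\rho_1}\overline{u^1_{xx}}$ applied to \eqref{3new-pol5}. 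For \eqref{3C3-Lemma3}, I would multiply \eqref{3new-pol2} by $\overline{u^3_{xx}}$ alone (no pairing), producing $\int|u^3_x|^2$ from the $\la^2 u^3\cdot\overline{u^3_{xx}}$ term after integration by parts and using \eqref{pol3}, with the remaining $\int\tau\,\overline{u^3_{xx}}$ expanded via \eqref{tau}.

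For \eqref{4C3-Lemma3}, which is the most elaborate, I would mimic Step 3 of Lemma \ref{L5}: multiply \eqref{3new-pol2} by $\overline{y^3_{xx}}$ and \eqref{3new-pol5} by $\overline{u^3_{xx}}$, take the real part of the difference so that the $\la^2$ and the $u^3_{xx}\cdot y^3_{xx}$ pieces cancel, and expand the $\tau$-terms; the asymmetry in the normalising coefficients $(\rho_3 h_3)^{-1}$ versus $(\rho_3 I_3)^{-1}\tfrac{h_3}{2}$ produces the specific combination $\tfrac{h_3}{2I_3}\int|u^3_x|^2-\tfrac12\int|y^3_x|^2$ together with the cross terms displayed in \eqref{4C3-Lemma3}. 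The $G_3h_3/(\rho_3 I_3)\int(\omega_x+y^3)\overline{u^3_{xx}}$ piece from \eqref{3new-pol5} is handled by integration by parts and the small-ness of $\omega_x$ and $y^3$ from Lemmas \ref{3C3-diss}--\ref{C3-L2-est}.

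The main obstacle will be the careful bookkeeping of the powers of $\la$: since $a=0$, the dissipation controls only $z^1$ and $\psi$, so $v^1$ is not directly small. Consequently every boundary term coming from integration by parts of $u^1_{xx}\overline{u^3_{xx}}$ or $u^1_{xx}\overline{y^3_{xx}}$ must be controlled via the Gagliardo--Nirenberg/trace bounds analogous to those in Lemma \ref{L3-est}, and the weaker decay $\|\omega_x\|=o(1)/\la^2$ (versus the sharper bound available in the setting of Theorem \ref{POL-ab}) is exactly what produces the slower $o(1)/\la^{1/2}$ rates in \eqref{2C3-Lemma3}--\eqref{3C3-Lemma3}; making sure this threshold is exactly respected for each $\tau$-expansion is the most delicate point.
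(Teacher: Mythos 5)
Your plan for \eqref{4C3-Lemma3} (pair \eqref{3new-pol2} with \eqref{3new-pol5} via the multipliers $\overline{y^3_{xx}}$ and $\overline{u^3_{xx}}$ and take the real part of the difference) coincides with the paper's Step 4. But the pairings you propose for the first three identities do not work, and the gap is structural, not bookkeeping. Each of \eqref{1C3-Lemma3}--\eqref{3C3-Lemma3} is really the statement that $\Re\bigl(\int_0^L\tau_x\overline{w_x}\,dx\bigr)$ is small for $w=u^1,\,y^3,\,u^3$ respectively, and the paper obtains each one by testing against the \emph{damped} $y^1$-equation \eqref{3new-pol4} (or, for \eqref{2C3-Lemma3}, by pairing \eqref{3new-pol4} with \eqref{3new-pol5}). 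The point of that choice is that the leftover $\la^2$ cross term after integration by parts always carries a factor $y^1_x$, whose $L^2$-norm is already $o(1)/\la^{5/2}$ by Lemma \ref{C3-L2-est} (this is where the dissipation through $b$ enters, and it is precisely $\la^2\|y^1_x\|\,\|y^3_x\|=o(1)/\la^{1/2}$ that produces the $o(1)/\la^{1/2}$ rates in \eqref{2C3-Lemma3}--\eqref{3C3-Lemma3}, not the decay of $\omega_x$ as you suggest). Your pairing of \eqref{3new-pol1} with \eqref{3new-pol2} for \eqref{1C3-Lemma3} instead leaves the term $\rho_1\rho_3\la^2\bigl(\tfrac{E_1}{\rho_1}-\tfrac{E_3}{\rho_3}\bigr)\Re\bigl(\int_0^L u^1_x\overline{u^3_x}\,dx\bigr)$, which is a priori $O(\la^2)$ and is not known to be small at this stage; in fact that identity is exactly \eqref{1C3-Lemma5-1} of Lemma \ref{C3-Lemma5}, whose proof \emph{uses} \eqref{1C3-Lemma3} and \eqref{3C3-Lemma3} to kill that very term, so your route is circular. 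The same defect affects your proposal for \eqref{2C3-Lemma3}: pairing \eqref{3new-pol1} with \eqref{3new-pol5} leaves an uncontrolled $\la^2\bigl(\tfrac{E_3}{\rho_3}-\tfrac{E_1}{\rho_1}\bigr)\Re\bigl(\int_0^L u^1_x\overline{y^3_x}\,dx\bigr)$, which is the analogue of the large terms deliberately kept in Lemma \ref{L5} (there one can afford them because in the setting of Theorem \ref{POL-ab} the damping makes $u^1_x$ itself small; here, with $a=0$, it is not).

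Your suggestion for \eqref{3C3-Lemma3} fails for a different reason: multiplying \eqref{3new-pol2} by $\overline{u^3_{xx}}$ alone produces $-\la^2\int_0^L|u^3_x|^2dx+\tfrac{E_3}{\rho_3}\int_0^L|u^3_{xx}|^2dx$ plus the $\tau$-term, and the positive term $\|u^3_{xx}\|^2=O(\la^2)$ neither cancels nor is controlled, so nothing can be extracted. The paper instead pairs \eqref{3new-pol2} with \eqref{3new-pol4} (multipliers $\tfrac{E_1}{\rho_1}\overline{y^1_{xx}}$ and $\tfrac{E_3}{\rho_3}\overline{u^3_{xx}}$), again so that the second-derivative cross terms cancel identically and the surviving $\la^2$ term contains $y^1_x$. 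A last, minor point: no Gagliardo--Nirenberg trace estimates are needed in this lemma, since every boundary term in the integrations by parts is of the form $[\tau\,\overline{w_x}]_0^L$ or $[\,w\,\overline{(\cdot)_x}]_0^L$ and vanishes because $\tau$ and all of $u^1,u^3,y^1,y^3,\omega_x$ vanish at the endpoints; the trace machinery is only needed later, in Lemma \ref{C3-Lemma4}, where the $\omega$-equation is tested against $\overline{u^1_x}$.
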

\begin{proof}
The proof of this Lemma is divided into several steps.\\
\textbf{Step 1.} In this step, we prove estimation \eqref{1C3-Lemma3}. For this aim, multiply \eqref{3new-pol1} and \eqref{3new-pol4}  by $\overline{y^1_{xx}}$ and $\overline{u^1_{xx}}$ respectively,  integrating by parts over $(0,L)$,  subtracting the two equations, taking the real part, and using the facts that $\tau_x$ is uniformly bounded in $L^2(0,L)$ and using Lemmas \ref{3C3-diss} , \ref{C3-L2-est}, we get 
\begin{equation}\label{5C3-Lemma3}
\Re\left(\int_0^L\tau_x\overline{u^1_x}dx\right)=\frac{o(1)}{\la^{2}}.
\end{equation}
Using the definition of the function $\tau$ and Lemma \ref{C3-L2-est} in \eqref{5C3-Lemma3}, we get \eqref{1C3-Lemma3}.\\
\textbf{Step 2.} In this step, we prove estimation \eqref{2C3-Lemma3}. For this aim, multiplying \eqref{3new-pol4} and \eqref{3new-pol5}  by $\frac{E_3}{\rho_3}\overline{y^3_{xx}}$ and $\frac{E_1}{\rho_1}\overline{y^1_{xx}}$ respectively, integrating by parts over $(0,L)$,  subtracting their results, taking the real part, and using the facts that $\frac{E_1}{\rho_1}\neq \frac{E_3}{\rho_3}$ and using Lemmas \ref{3C3-diss} and \ref{C3-L2-est}, we get 
\begin{equation}\label{6C3-Lemma3}
-\Re\left(\int_0^L\tau_x\overline{y^3_x}dx\right)=\frac{o(1)}{\la^{\frac{1}{2}}}.
\end{equation}
Using the definition of the function $\tau$ in \eqref{6C3-Lemma3}, we get the \eqref{2C3-Lemma3}.\\
\textbf{Step 3.} In this step, we prove estimation \eqref{3C3-Lemma3}. For this aim, multiplying \eqref{3new-pol2} and \eqref{3new-pol4}  by $\frac{E_1}{\rho_1}\overline{y^1_{xx}}$ and $\frac{E_3}{\rho_3}\overline{u^3_{xx}}$ respectively,  integrating by parts on $(0,L)$,  subtracting their results, taking the real part, and using the fact that $\frac{E_1}{\rho_1}\neq \frac{E_3}{\rho_3}$ and using Lemmas \ref{3C3-diss}, \ref{C3-L2-est}, we get 
\begin{equation}\label{7C3-Lemma3}
\Re\left(\int_0^L\tau_x\overline{u^3_x}dx\right)=\frac{o(1)}{\la^{\frac{1}{2}}}.
\end{equation}
Using the definition of the function $\tau$ in \eqref{7C3-Lemma3}, we get estimation \eqref{3C3-Lemma3}.\\
\textbf{Step 4.} In order to complete the proof of this Lemma, we prove estimation \eqref{4C3-Lemma3}. For this aim, multiply \eqref{3new-pol2} and \eqref{3new-pol5}  by $\overline{y^3_{xx}}$ and $\overline{u^3_{xx}}$ respectively,  integrating by parts on $(0,L)$,  subtracting their results, taking the real part, and using Lemmas \ref{3C3-diss} and \ref{C3-L2-est}, we get 
\begin{equation}\label{8C3-Lemma3}
\frac{1}{h_3}\Re\left(\int_0^L\tau_x\overline{y^3_x}dx\right)+\frac{h_3}{2I_3}\Re\left(\int_0^L\tau_x\overline{u^3_x}dx\right)-\frac{G_3h_3}{I_3}\Re\left(\int_0^Ly^3_x\overline{u^3_x}dx\right)=\frac{o(1)}{\la^2}.
\end{equation}
Using the definition of $\tau$ in \eqref{8C3-Lemma3}, we get estimation \eqref{4C3-Lemma3}. The proof has been completed.
\end{proof}
\begin{Corollary}
Assume that $\frac{E_1}{\rho_1}\neq \frac{E_3}{\rho_3}$. Then, the solution $U\in D(\mathcal{A})$ of system \eqref{3new-pol1}-\eqref{3new-pol5} satisfies the following asymptotic behavior estimations 
\begin{equation}\label{9C3-Lemma3}
\Re\left(\int_0^Lu^3_x\overline{y^3_x}dx\right)=\frac{o(1)}{\la^{\frac{1}{2}}}.
\end{equation}
\end{Corollary}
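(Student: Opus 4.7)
The Corollary follows by a purely algebraic manipulation of the three estimates \eqref{2C3-Lemma3}, \eqref{3C3-Lemma3}, and \eqref{4C3-Lemma3} of Lemma \ref{C3-Lemma3}. My plan is to combine them so that every cross term involving $u^1_x$ is eliminated, leaving a single equation in the real quantity $R := \Re\int_0^L u^3_x\overline{y^3_x}\,dx$ with a nonzero coefficient, from which the claimed bound follows.

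Concretely, the first step is to use \eqref{2C3-Lemma3} to express
\[
\Re\!\int_0^L u^1_x\overline{y^3_x}\,dx = R - \frac{h_3}{2}\int_0^L\abs{y^3_x}^2dx + \frac{o(1)}{\la^{1/2}},
\]
and to use \eqref{3C3-Lemma3} to express
\[
\int_0^L\abs{u^3_x}^2dx - \Re\!\int_0^L u^1_x\overline{u^3_x}\,dx = \frac{h_3}{2} R + \frac{o(1)}{\la^{1/2}}.
\]
Both rewrites are admissible because $\Re\int u^1_x\overline{y^3_x}\,dx = \Re\int y^3_x\overline{u^1_x}\,dx$ (the real part is invariant under conjugation of the integrand).

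Next, I would substitute the first identity into \eqref{4C3-Lemma3}: the $-\frac{1}{h_3}\Re\int u^1_x\overline{y^3_x}$ term produces $-\frac{1}{h_3}R + \frac{1}{2}\int\abs{y^3_x}^2$, which cancels exactly the $-\frac{1}{2}\int\abs{y^3_x}^2$ term in \eqref{4C3-Lemma3} and combines with the coefficient $\bigl(\frac{1}{h_3}-\frac{h_3^2}{4I_3}-\frac{G_3h_3}{I_3}\bigr)R$ to yield $\bigl(-\frac{h_3^2}{4I_3}-\frac{G_3h_3}{I_3}\bigr)R$. The remaining group of terms is $\frac{h_3}{2I_3}\bigl(\int\abs{u^3_x}^2 - \Re\int u^1_x\overline{u^3_x}\bigr)$, which the second identity above converts to $\frac{h_3}{2I_3}\cdot\frac{h_3}{2}R + \frac{o(1)}{\la^{1/2}}$. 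The $\frac{h_3^2}{4I_3}R$ contributions cancel, leaving exactly $-\frac{G_3h_3}{I_3}R = \frac{o(1)}{\la^{1/2}}$, which gives \eqref{9C3-Lemma3} after dividing by the nonzero constant $-G_3h_3/I_3$.

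The only obstacle is the bookkeeping of the four linear combinations and verifying that both the $\int\abs{y^3_x}^2$ coefficients and the $\frac{h_3^2}{4I_3}R$ coefficients cancel exactly; once this algebra is checked, no further analytic input is needed. The worst remainder among the three inputs is $o(1)/\la^{1/2}$ (from \eqref{2C3-Lemma3} and \eqref{3C3-Lemma3}), which dictates the decay rate in the conclusion.
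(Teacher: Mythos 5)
Your proposal is correct and is essentially the paper's own argument: your two substitutions amount to forming the linear combination $\eqref{4C3-Lemma3}+h_3^{-1}\cdot\eqref{2C3-Lemma3}-\frac{h_3}{2I_3}\cdot\eqref{3C3-Lemma3}$, which is precisely what the paper does (first producing \eqref{10C3-Lemma3}, then adding $-\frac{h_3}{2I_3}$ times \eqref{3C3-Lemma3}), and the cancellations and the final coefficient $-\frac{G_3h_3}{I_3}$ check out. The bookkeeping, including the conjugation-invariance of the real parts and the $o(1)/\la^{1/2}$ remainder, is exactly as in the paper.
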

\begin{proof}
Multiplying \eqref{2C3-Lemma3} by $h_3^{-1}$ and summing with \eqref{4C3-Lemma3}, we get 
\begin{equation}\label{10C3-Lemma3}
\frac{h_3}{2I_3}\int_0^L\abs{u^3_x}^2dx-\frac{h_3}{2I_3}\Re\left(\int_0^Lu^1_x\overline{u^3_x}dx\right)-\left(\frac{h_3^2}{4I_3}+\frac{G_3h_3}{I_3}\right)\Re\left(\int_0^Lu^3_x\overline{y^3_x}dx\right)=\frac{o(1)}{\la^{\frac{1}{2}}}.
\end{equation}
Now, multiplying \eqref{3C3-Lemma3} by $-\frac{h_3}{2I_3}$ and summing with \eqref{10C3-Lemma3}, we get \eqref{9C3-Lemma3}. The proof has been completed. 
\end{proof}
\begin{Lemma}\label{C3-Lemma4}
Assume that $\frac{E_1}{\rho_1}\neq \frac{E_3}{\rho_3}$. Then, the solution $U\in D(\mathcal{A})$ of system \eqref{3new-pol1}-\eqref{3new-pol5} satisfies the following asymptotic behavior estimations 
\begin{equation}\label{1C3-Lemma4}
-h_2\int_0^L\abs{u^1_x}^2dx+h_2\Re\left(\int_0^Lu^3_x\overline{u^1_x}dx\right)+h_3\left(G_3-\frac{h_2}{2}\right)\Re\left(\int_0^Ly^3_x\overline{u^1_x}dx\right)=o(1).
\end{equation}
\end{Lemma}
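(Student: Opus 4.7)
The plan is to reproduce, for the current damping configuration, the boundary--beam coupling technique used in Lemma \ref{C2-L5-est}. Concretely, I would multiply the transverse--displacement equation \eqref{3new-pol3} by $\rho h\,\overline{u^1_x}$, integrate over $(0,L)$, and take the real part. This produces seven contributions:
\begin{equation*}
\rho h \la^2\!\!\int_0^L\omega\,\overline{u^1_x}\,dx-EI\!\!\int_0^L\omega_{xxxx}\overline{u^1_x}\,dx+G_h\!\!\int_0^L\omega_{xx}\overline{u^1_x}\,dx+G_1h_1\!\!\int_0^L y^1_x\overline{u^1_x}\,dx+G_3h_3\!\!\int_0^L y^3_x\overline{u^1_x}\,dx+h_2\!\!\int_0^L\tau_x\overline{u^1_x}\,dx-i\la c\!\!\int_0^L\omega\,\overline{u^1_x}\,dx,
\end{equation*}
plus a right--hand side that is $o(1)$ because $f_5\to 0$ in $H^2_0(0,L)$ and $f_6\to 0$ in $L^2(0,L)$.

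The terms not involving $\omega_{xxxx}$ can be disposed of directly. Using Lemma \ref{3C3-diss} ($\|\omega\|=o(1)/\la^{4}$) together with the uniform bound $\|u^1_x\|=O(1)$, the first and last terms are $o(1)$. The third term is handled by Lemma \ref{C3-L2-est}, which gives $\|\omega_{xx}\|=o(1)/\la^{2}$. The fourth term uses $\|y^1_x\|=o(1)/\la^{5/2}$ from the same lemma. Thus only the $G_3h_3$ term and the $\tau_x$--term survive. Expanding $\tau_x=-u^1_x+u^3_x+h_2\omega_{xx}-\tfrac{h_1}{2}y^1_x-\tfrac{h_3}{2}y^3_x$ and again using the smallness of $\omega_{xx}$ and $y^1_x$, the sixth term reduces (modulo $o(1)$) to
\[
-h_2\!\int_0^L |u^1_x|^2\,dx+h_2\,\Re\!\int_0^L u^3_x\overline{u^1_x}\,dx-\tfrac{h_2h_3}{2}\,\Re\!\int_0^L y^3_x\overline{u^1_x}\,dx,
\]
which, combined with the $G_3h_3\,\Re\!\int y^3_x\overline{u^1_x}$ contribution, produces exactly the asserted identity \eqref{1C3-Lemma4}.

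The delicate step is the second term. Integration by parts yields $-EI\bigl[\omega_{xxx}\overline{u^1_x}\bigr]_0^L+EI\!\int_0^L\omega_{xxx}\overline{u^1_{xx}}\,dx$, so one needs both interior and pointwise control of $\omega_{xxx}$ and $u^1_x$. For $\omega$, the same Gagliardo--Nirenberg chain used in Step 1 of Lemma \ref{C2-L5-est} applies verbatim (since $\ell=6$ here as well): from $\|\omega_{xxxx}\|=O(\la)$ and $\|\omega_{xx}\|=o(1)/\la^{2}$ one gets $\|\omega_{xxx}\|=o(1)/\la$ and $|\omega_{xxx}(\xi)|=o(1)/\la^{1/2}$. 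For $u^1$, equation \eqref{3new-pol1} together with $\|\la u^1\|=O(1)$ and $\|\tau\|=O(1)$ yields $\|u^1_{xx}\|=O(\la)$, and then Gagliardo--Nirenberg gives $|u^1_x(\xi)|=O(\la^{1/2})$. Combining these, the boundary contribution is $O(\la^{1/2})\cdot o(1)/\la^{1/2}=o(1)$ and the interior contribution is $O(\la)\cdot o(1)/\la=o(1)$.

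The main obstacle will be verifying these pointwise boundary estimates rigorously; once they are in hand, the remainder of the proof is a straightforward book--keeping exercise paralleling Lemma \ref{C2-L5-est}. Assembling all the estimates and taking the real part completes the proof of \eqref{1C3-Lemma4}.
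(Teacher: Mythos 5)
Your proposal is correct and follows essentially the same route as the paper: multiply \eqref{3new-pol3} by $\overline{u^1_x}$, dispose of the $EI\,\omega_{xxxx}$ term via integration by parts together with Gagliardo--Nirenberg interior and pointwise estimates on $\omega_{xxx}$ and $u^1_x$, and then expand $\tau_x$ to arrive at \eqref{1C3-Lemma4}. One small correction: in this damping configuration ($b,c>0$, so $\|\la\omega\|=o(1)/\la^{3}$ and $y^3_x,\tau_x$ are only $O(1)$ in \eqref{3new-pol3}) one actually has $\|\omega_{xxxx}\|=O(1)$ rather than $O(\la)$, and it is this sharper bound that makes the interpolation chain yield $\|\omega_{xxx}\|=o(1)/\la$ and $|\omega_{xxx}(\xi)|=o(1)/\la^{1/2}$ as the paper asserts; if you start from $O(\la)$ as written, the exponents do not close and the boundary term is not controlled.
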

\begin{proof}
First, using the same arguments in Step 1 of  Lemma \ref{C2-L5-est1}, we obtain 
\begin{equation}\label{2C3-Lemma4}
\|\omega_{xxx}\|\leq \frac{o(1)}{\la},\quad \abs{u^1_x(0)},\abs{u^1_x(L)}\leq O\left(\la^{\frac{1}{2}}\right)\quad \text{and}\quad \abs{\omega_{xxx}(0)},\abs{\omega_{xxx}(L)}\leq \frac{o(1)}{\la^{\frac{1}{2}}}. 
\end{equation}
Next, multiplying \eqref{3new-pol3} by $\overline{u^1_x}$, integrating over $(0,L)$ and using the fact that $u^1_x$ is uniformly bounded in $L^2(0,L)$, we get 
\begin{equation}\label{3C3-Lemma4}
\left\{\begin{array}{l}
\displaystyle
\frac{EI}{\rho h}\left(\Re\left(\int_0^L\omega_{xxx}\overline{u^1_{xx}}dx\right)-\Re\left(\left[\omega_{xxx}(\xi)\overline{u^1_x}(\xi)\right]_{\xi=0}^{\xi=L}\right)\right)\\
\displaystyle
+\frac{G_3h_3}{\rho h}\Re\left(\int_0^Ly^3_x\overline{u^3_x}dx\right)+\frac{h_2}{\rho h}\Re\left(\int_0^L\tau_x\overline{u^3_x}dx\right)=\frac{o(1)}{\la^2}.
\end{array}
\right.
\end{equation}
Using \eqref{2C3-Lemma4} and the fact that $\frac{1}{\la}\overline{u^3_{xx}}$ is uniformly bounded in $L^2(0,L)$,  we obtain 
\begin{equation*}
\left|\frac{EI}{\rho h}\left(\Re\left(\int_0^L\omega_{xxx}\overline{u^1_{xx}}dx\right)-\Re\left(\left[\omega_{xxx}(\xi)\overline{u^1_x}(\xi)\right]_{\xi=0}^{\xi=L}\right)\right)\right|=o(1).
\end{equation*}
Inserting the above estimation in \eqref{3C3-Lemma4}, we get 
\begin{equation}\label{4C3-Lemma4}
G_3h_3\Re\left(\int_0^Ly^3_x\overline{u^1_x}dx\right)+h_2\Re\left(\int_0^L\tau_x\overline{u^1_x}dx\right)=o(1).
\end{equation}
Using the definition of $\tau$ in \eqref{4C3-Lemma4}, we get estimation \eqref{1C3-Lemma4}. The proof has been completed.
\end{proof}
\begin{Corollary}
Assume that $\frac{E_1}{\rho_1}\neq \frac{E_3}{\rho_3}$. Then, the solution $U\in D(\mathcal{A})$ of system \eqref{3new-pol1}-\eqref{3new-pol5} satisfies the following asymptotic behavior estimations 
\begin{equation}\label{COR1}
\Re\left(\int_0^Ly^3_x\overline{u^1_x}dx\right)=o(1)\quad \text{and}\quad \int_0^L\abs{y^3_x}^2dx=o(1).
\end{equation}
\end{Corollary}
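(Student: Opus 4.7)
The plan is to extract the first estimate as a linear combination of \eqref{1C3-Lemma3} and \eqref{1C3-Lemma4}, and then plug the result into \eqref{2C3-Lemma3} together with the corollary \eqref{9C3-Lemma3} to obtain the second estimate. Concretely, I would observe that both \eqref{1C3-Lemma3} and \eqref{1C3-Lemma4} contain the two quantities $\int_0^L \abs{u^1_x}^2 dx$ and $\Re\bigl(\int_0^L u^3_x\overline{u^1_x}dx\bigr)$, with coefficients in ratio $1:h_2$. So multiplying \eqref{1C3-Lemma3} by $h_2$ and subtracting from \eqref{1C3-Lemma4} cancels these two terms, leaving only a multiple of $\Re\bigl(\int_0^L y^3_x\overline{u^1_x}dx\bigr)$. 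The surviving coefficient is $h_3(G_3 - h_2/2) + h_2 h_3/2 = G_3 h_3$, which is strictly positive by the physical assumptions. Dividing out gives the first assertion $\Re\bigl(\int_0^L y^3_x\overline{u^1_x}dx\bigr) = o(1)$.

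For the second estimate, I would turn to \eqref{2C3-Lemma3}, which reads
\begin{equation*}
\tfrac{h_3}{2}\int_0^L\abs{y^3_x}^2 dx - \Re\!\left(\int_0^L u^3_x\overline{y^3_x}dx\right) + \Re\!\left(\int_0^L u^1_x\overline{y^3_x}dx\right) = \tfrac{o(1)}{\la^{1/2}}.
\end{equation*}
Using the reality identity $\Re\bigl(\int u^1_x\overline{y^3_x}\bigr) = \Re\bigl(\int y^3_x\overline{u^1_x}\bigr)$, the just-proved first assertion makes the last term $o(1)$, while \eqref{9C3-Lemma3} makes the middle term $o(\la^{-1/2})$. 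All together, $\tfrac{h_3}{2}\int_0^L \abs{y^3_x}^2 dx = o(1)$, which yields $\int_0^L \abs{y^3_x}^2 dx = o(1)$ since $h_3 > 0$.

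No step appears to be a genuine obstacle, since both identities \eqref{1C3-Lemma3} and \eqref{1C3-Lemma4} have already been proved, and the combination is algebraically straightforward; the only thing to verify carefully is that the coefficient $G_3 h_3$ in front of $\Re(\int y^3_x\overline{u^1_x})$ after the cancellation is nonzero, which is immediate from the physical positivity assumptions on $G_3$ and $h_3$ stated at the beginning of the paper. Thus the corollary follows essentially by bookkeeping from the previous lemma and corollary, with no further integration by parts or frequency-domain manipulation required.
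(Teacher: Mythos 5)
Your proposal is correct and coincides with the paper's own argument: the paper likewise forms the combination $-h_2\cdot\eqref{1C3-Lemma3}+\eqref{1C3-Lemma4}$ (equivalent to your ``multiply by $h_2$ and subtract''), cancelling the $\int_0^L|u^1_x|^2dx$ and $\Re\left(\int_0^L u^3_x\overline{u^1_x}\,dx\right)$ terms and leaving $G_3h_3\,\Re\left(\int_0^L y^3_x\overline{u^1_x}\,dx\right)=o(1)$, and then substitutes this together with \eqref{9C3-Lemma3} into \eqref{2C3-Lemma3}. Your verification of the surviving coefficient $G_3h_3>0$ and the use of the conjugate-symmetry of the real part are exactly the bookkeeping the paper leaves implicit.
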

\begin{proof}
Multiplying \eqref{1C3-Lemma3} by $-h_2$ and summing with \eqref{1C3-Lemma4}, we get the first estimation in \eqref{COR1}. Now, using the first estimation in \eqref{COR1} and \eqref{9C3-Lemma3} in \eqref{2C3-Lemma3}, we get the second estimation in \eqref{COR1}. The proof has been completed. 
\end{proof}
\begin{Lemma}\label{C3-Lemma5}
Assume that $\frac{E_1}{\rho_1}\neq \frac{E_3}{\rho_3}$. Then, the solution $U\in D(\mathcal{A})$ of system \eqref{3new-pol1}-\eqref{3new-pol5} satisfies the following asymptotic behavior estimation
\begin{equation}\label{C3-Lemma5-1}
\Re\left(\int_0^Lu^1_x\overline{u^3_x}dx\right)=\frac{o(1)}{\la^2}.
\end{equation}
\end{Lemma}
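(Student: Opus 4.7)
The plan is to apply the same multiplier technique used in the proof of Lemma \ref{C3-Lemma3}, but now pairing equations \eqref{3new-pol1} and \eqref{3new-pol2} with each other's second derivatives, since these are the two equations that govern the longitudinal displacements $u^1$ and $u^3$ and their speed ratio $\frac{E_i}{\rho_i}$ is precisely the quantity assumed to differ. The leading $\la^2$ factors will then combine into $\la^2(\frac{E_1}{\rho_1}-\frac{E_3}{\rho_3})\Re\int u^1_x\overline{u^3_x}\,dx$, and dividing by this non-zero coefficient will yield the claim.

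Concretely, I would multiply \eqref{3new-pol1} by $\frac{E_3}{\rho_3}\overline{u^3_{xx}}$, integrate over $(0,L)$, and integrate by parts in the two terms $\int u^1\overline{u^3_{xx}}\,dx$ and $\int\tau\overline{u^3_{xx}}\,dx$. Since $u^1(0)=u^1(L)=0$ and since $\tau$ vanishes at the endpoints (because $u^1,u^3,y^1,y^3,\omega_x$ all vanish there by the Dirichlet/clamped boundary conditions), both boundary contributions disappear, leaving
\begin{equation*}
-\la^2\frac{E_3}{\rho_3}\int_0^L u^1_x\overline{u^3_x}\,dx+\frac{E_1E_3}{\rho_1\rho_3}\int_0^L u^1_{xx}\overline{u^3_{xx}}\,dx-\frac{E_3}{\rho_1h_1\rho_3}\int_0^L\tau_x\overline{u^3_x}\,dx=o(\la^{-5}),
\end{equation*}
where the small right-hand side follows from $\|u^3_{xx}\|=O(\la)$ combined with $\|f_2\|=o(1)$ and an integration by parts of the $f_1$-term using $f_1\to 0$ in $H_0^1(0,L)$. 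Performing the symmetric operation on \eqref{3new-pol2} with multiplier $\frac{E_1}{\rho_1}\overline{u^1_{xx}}$ produces an analogous identity.

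Subtracting the two identities and taking the real part cancels the mixed fourth-order contribution $\frac{E_1E_3}{\rho_1\rho_3}\int u^1_{xx}\overline{u^3_{xx}}\,dx$ (its difference with its conjugate has zero real part), and yields
\begin{equation*}
\la^2\!\left(\tfrac{E_1}{\rho_1}-\tfrac{E_3}{\rho_3}\right)\Re\!\left(\int_0^L u^1_x\overline{u^3_x}\,dx\right)=\tfrac{E_3}{\rho_1h_1\rho_3}\Re\!\left(\int_0^L\tau_x\overline{u^3_x}\,dx\right)+\tfrac{E_1}{\rho_1\rho_3 h_3}\Re\!\left(\int_0^L\tau_x\overline{u^1_x}\,dx\right)+o(\la^{-5}).
\end{equation*}
At this point I plug in the two estimates already derived in the proof of Lemma \ref{C3-Lemma3}: equation \eqref{5C3-Lemma3} gives $\Re\int\tau_x\overline{u^1_x}\,dx=o(1)/\la^{2}$ and equation \eqref{7C3-Lemma3} gives $\Re\int\tau_x\overline{u^3_x}\,dx=o(1)/\la^{1/2}$. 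Since $\frac{E_1}{\rho_1}\neq\frac{E_3}{\rho_3}$, dividing by $\la^2(\frac{E_1}{\rho_1}-\frac{E_3}{\rho_3})$ produces $\Re\int u^1_x\overline{u^3_x}\,dx=o(1)/\la^{5/2}$, which is stronger than the desired $o(1)/\la^2$.

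I expect the routine algebra to go through cleanly; the only subtle point is verifying that the boundary contributions in the integration by parts truly cancel, which hinges on the Dirichlet boundary conditions for all five displacement variables forcing $\tau(0)=\tau(L)=0$. The other minor care point is keeping track of which Sobolev space each $f_j$ converges to zero in, in order to absorb the source terms into the $o(\la^{-5})$ error after one integration by parts.
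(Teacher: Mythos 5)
Your proposal is correct and follows essentially the same route as the paper: the central identity you derive by cross-multiplying \eqref{3new-pol1} and \eqref{3new-pol2} with $\tfrac{E_3}{\rho_3}\overline{u^3_{xx}}$ and $\tfrac{E_1}{\rho_1}\overline{u^1_{xx}}$ and taking the real part of the difference is exactly the paper's \eqref{C3-Lemma5-4}, and your observation that $\tau(0)=\tau(L)=0$ kills the boundary terms is the same justification used there. The only difference is cosmetic: you dispose of the residual terms by citing \eqref{5C3-Lemma3} and \eqref{7C3-Lemma3} directly, whereas the paper re-expands $\tau_x$ and cancels against \eqref{1C3-Lemma3} and \eqref{3C3-Lemma3} together with \eqref{9C3-Lemma3} and \eqref{COR1} --- which amounts to the same estimates --- so both arguments are valid and yield (at least) the claimed $\frac{o(1)}{\la^2}$.
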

\begin{proof}
The proof of this Lemma is divided into two steps.\\
\textbf{Step 1.} In this step, we prove the following estimation. 
\begin{equation}\label{1C3-Lemma5-1}
\left\{\begin{array}{l}
\displaystyle
\rho_1\rho_3\la^2\left(\frac{E_1}{\rho_1}-\frac{E_3}{\rho_3}\right)\Re\left(\int_0^Lu^1_x\overline{u^3_x}dx\right)-\frac{E_3}{h_1}\int_0^L\abs{u^3_x}^2dx+\frac{E_1}{h_3}\int_0^L\abs{u^1_x}^2dx\\
\displaystyle
\left(\frac{E_3}{h_1}-\frac{E_1}{h_3}\right)\Re\left(\int_0^Lu^1_x\overline{u^3_x}dx\right)=o(1).
\end{array}
\right.
\end{equation}
For this aim, multiplying equations \eqref{3new-pol1} and \eqref{3new-pol2}   by $\frac{E_3}{\rho_3}\overline{u^3_{xx}}$ and $\frac{E_1}{\rho_1}\overline{u^1_{xx}}$ respectively,  integrating by parts over $(0,L)$ and using the facts that $\la^{-1}u^{3}_{xx}$ and $\la^{-1}u^1_{xx}$ are uniformly bounded in $L^2(0,L)$, we get 
\begin{equation}\label{C3-Lemma5-2}
-\frac{E_3}{\rho_3}\la^2\int_0^Lu^1_x\overline{u^3_x}dx+\frac{E_1E_3}{\rho_1\rho_3}\int_0^Lu^1_{xx}\overline{u^3_{xx}}dx-\frac{E_3}{\rho_1\rho_3h_1}\int_0^L\tau_x\overline{u^3_x}dx=\frac{o(1)}{\la^4}
\end{equation}
and
\begin{equation}\label{C3-Lemma5-3}
-\frac{E_1}{\rho_1}\la^2\int_0^Lu^3_x\overline{u^1_{x}}dx+\frac{E_3E_1}{\rho_3\rho_1}\int_0^Lu^3_{xx}\overline{u^1_{xx}}dx+\frac{E_1}{\rho_1\rho_3h_3}\int_0^L\tau_x\overline{u^1_x}dx=\frac{o(1)}{\la^4}.
\end{equation}
Subtracting \eqref{C3-Lemma5-2} and \eqref{C3-Lemma5-3} and taking the real part, we get 
\begin{equation}\label{C3-Lemma5-4}
\rho_1\rho_3\la^2\left(\frac{E_1}{\rho_1}-\frac{E_3}{\rho_3}\right)\Re\left(\int_0^Lu^1_x\overline{u^3_x}dx\right)-\frac{E_3}{h_1}\Re\left(\int_0^L\tau_x\overline{u^3_x}dx\right)-\frac{E_1}{h_3}\Re\left(\int_0^L\tau_x\overline{u^1_x}dx\right)=\frac{o(1)}{\la^{4}}.
\end{equation}
Using the definition of $\tau$, Lemma \ref{C3-L2-est} , \eqref{9C3-Lemma3}, \eqref{COR1} and the fact that $u^1_{x}$ and $u^3_x$ are uniformly bounded in $L^2(0,L)$, we get
\begin{equation}\label{C3-Lemma5-5}
-\frac{E_3}{h_1}\Re\left(\int_0^L\tau_x\overline{u^3_x}dx\right)=\frac{E_3}{h_1}\Re\left(\int_0^Lu^1_x\overline{u^3_x}dx\right)-\frac{E_3}{h_1}\int_0^L\abs{u^3_x}^2dx+o(1)
\end{equation}
and 
\begin{equation}\label{C3-Lemma5-6}
-\frac{E_1}{h_3}\Re\left(\int_0^L\tau_x\overline{u^1_x}dx\right)=\frac{E_1}{h_3}\int_0^L\abs{u^1_x}^2dx-\frac{E_1}{h_3}\Re\left(\int_0^Lu^3_x\overline{u^1_x}dx\right)+o(1).
\end{equation}
Inserting \eqref{C3-Lemma5-5} and \eqref{C3-Lemma5-6} in \eqref{C3-Lemma5-4}, we get the desired result \eqref{1C3-Lemma5-1}.\\
\textbf{Step 2.} In this part we prove \eqref{C3-Lemma5-1}. For this aim, multiplying \eqref{1C3-Lemma3} by $\frac{E_1}{h_3}$, summing with \eqref{1C3-Lemma5-1}  and using \eqref{COR1}, we get 
\begin{equation}\label{C3-Lemma5-7}
\rho_1\rho_3\la^2\left(\frac{E_1}{\rho_1}-\frac{E_3}{\rho_3}\right)\Re\left(\int_0^Lu^1_x\overline{u^3_x}dx\right)-\frac{E_3}{h_1}\int_0^L\abs{u^3_x}^2dx+\frac{E_3}{h_1}\Re\left(\int_0^Lu^1_x\overline{u^3_x}dx\right)=o(1).
\end{equation}
Now, multiplying \eqref{3C3-Lemma3} by $\frac{E_3}{h_1}$ summing with \eqref{C3-Lemma5-7} and using \eqref{9C3-Lemma3}, we get 
\begin{equation*}
\rho_1\rho_3\la^2\left(\frac{E_1}{\rho_1}-\frac{E_3}{\rho_3}\right)\Re\left(\int_0^Lu^1_x\overline{u^3_x}dx\right)=o(1).
\end{equation*}
Using the fact that $\frac{E_1}{\rho_1}\neq \frac{E_3}{\rho_3}$, we get the desired result \eqref{C3-Lemma5-1}. The proof has been completed. 
\end{proof}
\begin{Corollary}
Assume that $\frac{E_1}{\rho_1}\neq \frac{E_3}{\rho_3}$. Then, the solution $U\in D(\mathcal{A})$ of system \eqref{3new-pol1}-\eqref{3new-pol5} satisfies the following asymptotic behavior estimations
\begin{equation}\label{1COR}
\int_0^{L}\abs{u^1_x}^2dx=o(1)\quad \text{and}\quad \int_0^L\abs{u^3_x}^2dx=\frac{o(1)}{\la^{\frac{1}{2}}}.
\end{equation}
and the following estimations
\begin{equation}\label{2COR}
\int_0^L\abs{\la u^1}^2dx=o(1),\quad \int_0^L\abs{\la u^3}^2dx=\frac{o(1)}{\la^{\frac{1}{2}}}\quad \text{and}\quad \int_0^L\abs{\la y^3}^2dx=o(1).
\end{equation}
\end{Corollary}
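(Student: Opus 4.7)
The plan is to deduce the four asserted estimations by combining the identities established in Lemma \ref{C3-Lemma3}, the corollaries \eqref{9C3-Lemma3}--\eqref{COR1}, and the key cancellation \eqref{C3-Lemma5-1} from Lemma \ref{C3-Lemma5}. Since \eqref{C3-Lemma5-1} already supplies the crucial control $\Re(\int_0^L u^1_x\overline{u^3_x}\,dx)=o(1)/\la^2$, essentially all the mixed $L^2$ inner products appearing in \eqref{1C3-Lemma3}--\eqref{4C3-Lemma3} are now under control, and the remaining work is to substitute them back in.

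First, I would prove the $H^1$-type estimates \eqref{1COR}. Inserting the first part of \eqref{COR1} (namely $\Re(\int y^3_x\overline{u^1_x})=o(1)$) together with \eqref{C3-Lemma5-1} into the identity \eqref{1C3-Lemma3}, the two cross terms collapse and one reads off $\int_0^L|u^1_x|^2dx=o(1)$. For $u^3_x$, I would similarly substitute \eqref{C3-Lemma5-1} and \eqref{9C3-Lemma3} into \eqref{3C3-Lemma3}: the first cross term is $o(1)/\la^2$, the second is $o(1)/\la^{1/2}$, so $\int_0^L|u^3_x|^2dx=o(1)/\la^{1/2}$, matching the claim.

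Next, for the velocity-type bounds \eqref{2COR}, I would take the $L^2$-inner product of the resolvent equations \eqref{3new-pol1}, \eqref{3new-pol2} and \eqref{3new-pol5} with $-\overline{u^1}$, $-\overline{u^3}$ and $-\overline{y^3}$ respectively, integrate by parts, and exploit that $\la\tau$, $\la(\omega_x+y^3)$ and $\tau$ are bounded in $L^2$ (using Lemmas \ref{3C3-diss} and \ref{C3-L2-est} together with the definition \eqref{tau} of $\tau$). This yields the three standard identities
\begin{equation*}
\int_0^L|\la u^1|^2dx=\tfrac{E_1}{\rho_1}\int_0^L|u^1_x|^2dx+o(1),\quad \int_0^L|\la u^3|^2dx=\tfrac{E_3}{\rho_3}\int_0^L|u^3_x|^2dx+o(1),
\end{equation*}
and
\begin{equation*}
\int_0^L|\la y^3|^2dx=\tfrac{E_3}{\rho_3}\int_0^L|y^3_x|^2dx+o(1),
\end{equation*}
after which plugging in \eqref{1COR} and the second part of \eqref{COR1} gives \eqref{2COR}. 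One should check carefully that the remainders coming from $f_i\to 0$ and from the $\tau$ terms are all absorbed into $o(1)$, which requires only the already-established decays of $u^1$, $y^1$, $\omega$ and their derivatives at the rates listed in Lemmas \ref{3C3-diss}--\ref{C3-L2-est}.

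The only subtle point, and hence the main obstacle in the book-keeping, is tracking the exact polynomial rates: in particular that the $o(1)/\la^{1/2}$ loss in the $u^3_x$ bound propagates correctly into $\la u^3$ and that the cross terms involving $\int\tau\overline{u^3}$ do not destroy this rate. Once these orders are verified, the corollary follows by direct algebraic combination of the listed identities with no further integration by parts, so no new analytic tool is needed beyond those already deployed in the preceding lemmas.
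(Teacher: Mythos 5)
Your proposal is correct and follows essentially the same route as the paper: the paper likewise obtains \eqref{1COR} by substituting \eqref{C3-Lemma5-1} together with \eqref{COR1} (resp. \eqref{9C3-Lemma3}) into the identities of Lemma \ref{C3-Lemma3}, and then derives \eqref{2COR} by multiplying \eqref{3new-pol1}, \eqref{3new-pol2}, \eqref{3new-pol5} by $\overline{u^1}$, $\overline{u^3}$, $\overline{y^3}$ and integrating by parts. Your write-up is in fact more explicit than the paper's about which identities ( \eqref{1C3-Lemma3} and \eqref{3C3-Lemma3}) the cancellations are being fed into.
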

\begin{proof}
Using \eqref{C3-Lemma5-1}, \eqref{COR1} we obtain the first estimation in \eqref{1COR}. Similarly, using \eqref{C3-Lemma5-1} and \eqref{9C3-Lemma3}, we get the second estimation in \eqref{1COR}. Now, multiplying \eqref{3new-pol1}, \eqref{3new-pol2} and \eqref{3new-pol5}  by $\overline{u^1}$, $\overline{u^3}$ and $\overline{y^3}$ respectively, integrating by parts and using \eqref{COR1} and \eqref{1COR}, we get \eqref{2COR}. The proof has been completed. 
\end{proof}
$\newline$\\[0.1in]
{\noindent We now finish the proof of Theorem \ref{POL-bc}}. Using Lemmas \ref{3C3-diss}, \ref{C3-L2-est}  and equations \eqref{COR1}, \eqref{1COR} and \eqref{2COR}, we obtain that {$\|U\|_{\mathcal{H}}=o(1)$ }, which contradicts $(H_2)$. This implies that 
$$
\sup_{\la\in \R}\|(i\la I-\mathcal{A})^{-1}\|_{\mathcal{H}}{\le}O(\la^6).
$$
The proof has been completed.\\[0.1in]  

{\noindent To conclude this paper, we give the following observations: \\
 For a system of partially damped wave equations coupled by velocity or displacement (such as the Timoshenko beam equation, the Bresse beam equation), it is well-known that the damping is more effective in the case of equal wave speeds. However, for the Rao-Nakra sandwich beam equation or its generalized version, the opposite is true. The key to stabilization is to break up the symmetry.
\begin{itemize}
 	\item When both the longitudinal and shear displacements of the top/bottom layer are damped, no condition on the system coefficients is needed for stabilizability. 
 	\item When only the longitudinal displacement of the top/bottom layer is damped, we need the shear modulus $G_1\ne G_3$ for stabilizability.
 	\item When only the shear displacement of the top/bottom layer is damped, we need $\frac{E_1}{\rho_1}\ne \frac{E_3}{\rho_3}$ for stabilizability.   
\end{itemize} }

\appendix
\section{Some notions and stability theorems}\label{p2-appendix}
\noindent In order to make this paper more self-contained, we recall in this short appendix some notions and stability results used in this work. 
\begin{defi}\label{App-Definition-A.1}{\rm
		Assume that $A$ is the generator of $C_0-$semigroup of contractions $\left(e^{tA}\right)_{t\geq0}$ on a Hilbert space $H$. The $C_0-$semigroup $\left(e^{tA}\right)_{t\geq0}$ is said to be 
		\begin{enumerate}
			\item[$(1)$] Strongly stable if 
			$$
			\lim_{t\to +\infty} \|e^{tA}x_0\|_H=0,\quad \forall\, x_0\in H.
			$$
			\item[$(2)$] Exponentially (or uniformly) stable if there exists two positive constants $M$ and $\varepsilon$ such that 
			$$
			\|e^{tA}x_0\|_{H}\leq Me^{-\varepsilon t}\|x_0\|_{H},\quad \forall\, t>0,\ \forall\, x_0\in H.
			$$
			\item[$(3)$] Polynomially stable if there exists two positive constants $C$ and $\alpha$ such that 
			$$
			\|e^{tA}x_0\|_{H}\leq Ct^{-\alpha}\|A x_0\|_{H},\quad \forall\, t>0,\ \forall\, x_0\in D(A).
			$$
			\xqed{$\square$}
	\end{enumerate}}
\end{defi}
\noindent To show  the strong stability of the $C_0$-semigroup $\left(e^{tA}\right)_{t\geq0}$ we rely on the following result due to Arendt-Batty \cite{Arendt01}. 
\begin{Theorem}\label{App-Theorem-A.2}{\rm
		{Assume that $A$ is the generator of a C$_0-$semigroup of contractions $\left(e^{tA}\right)_{t\geq0}$  on a Hilbert space $H$. If $A$ has no pure imaginary eigenvalues and  $\sigma\left(A\right)\cap i\mathbb{R}$ is countable,
			where $\sigma\left(A\right)$ denotes the spectrum of $A$, then the $C_0$-semigroup $\left(e^{tA}\right)_{t\geq0}$  is strongly stable.}\xqed{$\square$}}
\end{Theorem}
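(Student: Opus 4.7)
The plan is to reduce Theorem \ref{App-Theorem-A.2} to the Banach-space Arendt-Batty-Lyubich-Phong theorem, which requires two hypotheses: that $\sigma(A)\cap i\mathbb{R}$ be countable and that $A^{\ast}$ have no purely imaginary eigenvalues. The first condition is given. For the second, I would exploit the contractive Hilbert-space setting: since $(e^{tA})_{t\geq 0}$ is a contraction semigroup on $H$, the adjoint semigroup $(e^{tA^{\ast}})_{t\geq 0}$ is also a contraction semigroup, so both $A$ and $A^{\ast}$ are $m$-dissipative. A short Hilbert-space calculation then shows that $\ker(i\beta-A)=\{0\}$ forces $\ker(-i\beta-A^{\ast})=\{0\}$ for each $\beta\in\mathbb{R}$: if $x$ lay in the latter kernel, the equality case of the dissipativity inequality, combined with $\mathrm{Re}\langle(i\beta-A)y,y\rangle\leq 0$ for $y\in D(A)$, would produce a corresponding eigenvector of $A$, contradicting the hypothesis.

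With both hypotheses of Arendt-Batty verified, I would run the Laplace-transform / Tauberian argument. Fix $x_0\in H$ and $\phi\in H^{\ast}$, and consider the bounded, uniformly continuous scalar orbit $g(t)=\langle e^{tA}x_0,\phi\rangle$. Its Laplace transform equals $\langle R(\lambda,A)x_0,\phi\rangle$ for $\mathrm{Re}\,\lambda>0$, and by the spectral hypothesis extends holomorphically across $i\mathbb{R}\setminus E$ where $E:=\sigma(A)\cap i\mathbb{R}$ is countable. The scalar Tauberian theorem for countable singular sets (the Allan-O'Farrell-Ransford refinement of Ingham's theorem) then forces $g(t)\to 0$ as $t\to\infty$. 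Since this holds for every continuous linear functional $\phi$, we obtain $e^{tA}x_0\rightharpoonup 0$ weakly.

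To upgrade weak to strong convergence, I would use that $t\mapsto\|e^{tA}x_0\|$ is monotone nonincreasing by contractivity, hence admits a limit $L\geq 0$. The Jacobs-de Leeuw-Glicksberg decomposition splits $H$ into a reversible part, equal to the closed linear span of eigenvectors of $A$ with purely imaginary eigenvalues, and a flight part on which orbits tend weakly to zero; under our hypothesis the reversible part is $\{0\}$, so any weak-subsequential limit $y$ of $e^{t_n A}x_0$ along a suitably sparse sequence $t_n\to\infty$ must vanish. Combined with the lower semicontinuity of the norm under weak limits, this forces $L=0$ and strong stability follows. The main obstacle is the Tauberian step: the complex-analytic input needed to pass from holomorphic extension across $i\mathbb{R}\setminus E$ to decay of the orbit is non-trivial, and I would invoke it as a black box from \cite{Arendt01} rather than attempt to reprove it here, since the paper itself cites Arendt-Batty for exactly this purpose.
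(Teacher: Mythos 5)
The paper does not prove this statement at all: Theorem~\ref{App-Theorem-A.2} is recalled in the appendix as a known result and is simply cited from \cite{Arendt01}, so there is no in-paper argument to compare yours against. Judged on its own terms, your reduction to the Banach-space Arendt--Batty--Lyubich--V\~u theorem is the right move, and the one genuinely substantive step --- passing from ``$A$ has no purely imaginary eigenvalues'' to ``$A^{\ast}$ has no purely imaginary eigenvalues'' in the Hilbert-space contraction setting --- is correct. I would however tighten that step: the cleanest argument is at the semigroup level rather than via the generator's dissipativity. If $A^{\ast}x=i\beta x$ with $x\neq 0$, then $e^{tA^{\ast}}x=e^{i\beta t}x$, and for a contraction $T=e^{tA}$ on a Hilbert space one computes $\|Tx-e^{-i\beta t}x\|^{2}=\|Tx\|^{2}-\|x\|^{2}\leq 0$ (using $\langle Tx,e^{-i\beta t}x\rangle=e^{i\beta t}\langle x,T^{\ast}x\rangle^{-}\!=\|x\|^{2}$), so $e^{tA}x=e^{-i\beta t}x$ for all $t$, whence $x\in D(A)$ and $Ax=-i\beta x$, contradicting the hypothesis. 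Your sketch via ``the equality case of the dissipativity inequality'' gestures at this but does not quite pin down why $x$ lands in $D(A)$.

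Two further cautions on the second half. First, it is redundant: once the two hypotheses of Arendt--Batty are verified, that theorem already yields \emph{norm} convergence $\|e^{tA}x_{0}\|\to 0$, so there is no need to separately derive weak convergence and then upgrade it. Second, the upgrade route you describe does not actually work as stated: the Jacobs--de Leeuw--Glicksberg decomposition, in the absence of unimodular eigenvalues, only places every orbit in the flight subspace, which gives $0$ as a \emph{weak} cluster point of the orbit along subnets --- it does not by itself force the monotone limit $L=\lim_{t\to\infty}\|e^{tA}x_{0}\|$ to vanish, since the norm is only weakly lower semicontinuous. The passage from weak to strong decay is precisely where the countability of $\sigma(A)\cap i\mathbb{R}$ and the complex-analytic/Tauberian machinery of \cite{Arendt01} are indispensable, so that part must remain a black box (or be replaced by the Lyubich--V\~u quotient argument). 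With the reduction step made precise and the theorem of \cite{Arendt01} invoked for the rest, your proposal is a correct and entirely standard derivation of the stated Hilbert-space version.
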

\noindent Concerning the characterization of polynomial stability stability of a $C_0-$semigroup of contraction $\left(e^{tA}\right)_{t\geq 0}$ we rely on the following result due to Borichev and Tomilov \cite{Borichev01} (see also \cite{Batty01} and \cite{RaoLiu01})
\begin{Theorem}\label{bt}
	{\rm
		Assume that $A$ is the generator of a strongly continuous semigroup of contractions $\left(e^{tA}\right)_{t\geq0}$  on $\mathcal{H}$.   If   $ i\mathbb{R}\subset \rho(\mathcal{A})$, then for a fixed $\ell>0$ the following conditions are equivalent
		\begin{equation}\label{h1}
		\sup_{\lambda\in\mathbb{R}}\left\|\left(i\lambda I-\mathcal{A}\right)^{-1}\right\|_{\mathcal{L}\left(\mathcal{H}\right)}=O\left(|\lambda|^\ell\right),
		\end{equation}
		\begin{equation}\label{h2}
		\|e^{t\mathcal{A}}U_{0}\|^2_{\HH} \leq \frac{C}{t^{\frac{2}{\ell}}}\|U_0\|^2_{D(\AA)},\hspace{0.1cm}\forall t>0,\hspace{0.1cm} U_0\in D(\AA),\hspace{0.1cm} \text{for some}\hspace{0.1cm} C>0.
		\end{equation}\xqed{$\square$}}
\end{Theorem}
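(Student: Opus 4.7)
The statement is the Borichev–Tomilov characterization of polynomial stability, and the plan is to prove the two implications \eqref{h1}$\Leftrightarrow$\eqref{h2} separately, using the standard Laplace/Plancherel framework for $C_0$-semigroups on Hilbert spaces.

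The direction \eqref{h2}$\Rightarrow$\eqref{h1} is the routine one. Since $(e^{tA})_{t\ge 0}$ is a contraction semigroup with $i\mathbb R\subset\rho(A)$, the resolvent admits the Laplace representation $R(i\lambda;A)U_0=-\int_0^\infty e^{-i\lambda t}e^{tA}U_0\,dt$ for $U_0\in D(A)$. Given $V_0\in\mathcal H$, I would apply this formula to $U_0=A^{-1}V_0\in D(A)$, write $R(i\lambda;A)V_0=i\lambda R(i\lambda;A)A^{-1}V_0-A^{-1}V_0$, and split the integral at a cut-off $T=T(\lambda)$: on $[0,T]$ I would use the contractivity $\|e^{tA}\|\le 1$, and on $[T,\infty)$ I would use \eqref{h2} applied to $A^{-1}V_0\in D(A)$. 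Optimizing $T$ as a power of $|\lambda|$ produces exactly the $|\lambda|^{\ell}$ growth in \eqref{h1}.

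The converse \eqref{h1}$\Rightarrow$\eqref{h2} is the content of the theorem and the step I expect to be the main obstacle. Following the Borichev–Tomilov (Math.\ Ann., 2010) argument, the plan is: (i) by density and two applications of $A^{-1}$, reduce to $U_0\in D(A^2)$ where one controls one extra power of $A$; (ii) use the Plancherel/Paley–Wiener theorem on the line $i\mathbb R\subset\rho(A)$ to identify the Fourier transform of $t\mapsto \mathbf 1_{t>0}e^{tA}U_0$ with $R(i\lambda;A)U_0$, which combined with the polynomial bound \eqref{h1} yields, after multiplying by suitable powers of $A^{-1}$, an $L^2(\mathbb R^+;\mathcal H)$-bound on $t\mapsto e^{tA}A^{-k}U_0$ for a sufficiently large power $k$ depending on $\ell$; (iii) interpolate between this $L^2$ decay and the uniform contraction bound to recover the pointwise decay $\|e^{tA}U_0\|\le Ct^{-1/\ell}\|A U_0\|$, equivalently \eqref{h2}. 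The delicate point is step (ii)–(iii): one must carefully track how the $|\lambda|^{\ell}$ resolvent growth, the $A^{-k}$ regularization, and the Plancherel $L^2$-identity combine to produce precisely the exponent $2/\ell$, and justify that the Fourier inversion on the critical line is licit because $i\mathbb R\subset\rho(A)$ and the semigroup is bounded.

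Since the statement is lifted verbatim from the Borichev–Tomilov paper and also appears in \cite{Batty01}, I would in fact not reprove it from scratch in the body of the paper: I would simply cite these references and restate the result, since the proof is long and entirely orthogonal to the beam-stabilization analysis carried out in Sections 2 and 3. The role of this theorem in the present work is only as a black-box frequency-domain criterion that reduces the polynomial-stability claims of Theorems \ref{POL-ab}, \ref{POL-ac}, \ref{POL-bc} to verification of \eqref{h1} on the imaginary axis, which is precisely the contradiction argument carried out in those subsections.
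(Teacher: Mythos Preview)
Your final paragraph is exactly right and matches the paper's treatment: the theorem is stated in the appendix without proof, with citations to Borichev--Tomilov \cite{Borichev01}, Batty--Duyckaerts \cite{Batty01}, and Liu--Rao \cite{RaoLiu01}, and is used only as a black-box criterion to reduce Theorems \ref{POL-ab}--\ref{POL-bc} to the resolvent estimate \eqref{h1}. Your proof sketch is a reasonable outline of the Borichev--Tomilov argument, but since the paper does not attempt a proof, the relevant comparison is simply that both you and the authors correctly recognize this as a cited result rather than something to be established here.
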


\end{document}